\documentclass[11pt]{amsart}
\usepackage{lipsum}
\makeatletter
\g@addto@macro{\endabstract}{\@setabstract}
\newcommand{\authorfootnotes}{\renewcommand\thefootnote{\@fnsymbol\c@footnote}}%
\makeatother

\usepackage[T1]{fontenc}
\usepackage{tipa}
\usepackage{enumitem}
\usepackage[dvipsnames]{xcolor}
\usepackage{setspace}
\usepackage[english]{babel}
\usepackage{amsmath,amsfonts,amssymb,amsthm,epsfig,epstopdf,url,array}
\usepackage[margin=1.3in]{geometry}
\usepackage{tikz-cd}
\usepackage{calc}
\usepackage{color,hyperref}
\usepackage[noabbrev]{cleveref}
\hypersetup{colorlinks}
\hypersetup{colorlinks={true},linkcolor={red},citecolor=green}
\usepackage{graphicx}
\usepackage{mathrsfs}
\graphicspath{{images/}}
\theoremstyle{plain}
\newtheorem{theorem}{Theorem}

\newtheorem{thm}{Theorem}[subsection]
\newtheorem{lem}[thm]{Lemma}
\newtheorem{prop}[thm]{Proposition}

\theoremstyle{plain}
\newtheorem{defn}[thm]{Definition}

\theoremstyle{remark}
\newtheorem{rem}[thm]{Remark}

\newcommand{\mycomment}[1]{}
\usepackage{tikz}
\usepackage{mathtools}

\makeatletter

\renewcommand{\tocsection}[3]{%
  \indentlabel{\@ifnotempty{#2}{\bfseries\ignorespaces#1 #2\quad}}\bfseries#3}
\renewcommand{\tocsubsection}[3]{%
  \indentlabel{\@ifnotempty{#2}{\ignorespaces#1 #2\quad}}#3}

\renewcommand{\tocsubsubsection}[3]{%
  \indentlabel{\@ifnotempty{#2}{\ignorespaces#1 #2\quad}}#3}

\newcommand\@dotsep{4.5}
\def\@tocline#1#2#3#4#5#6#7{\relax
  \ifnum #1>\c@tocdepth 
  \else
    \par \addpenalty\@secpenalty\addvspace{#2}%
    \begingroup \hyphenpenalty\@M
    \@ifempty{#4}{%
      \@tempdima\csname r@tocindent\number#1\endcsname\relax
    }{%
      \@tempdima#4\relax
    }%
    \parindent\z@ \leftskip#3\relax \advance\leftskip\@tempdima\relax
    \rightskip\@pnumwidth plus1em \parfillskip-\@pnumwidth
    #5\leavevmode\hskip-\@tempdima{#6}\nobreak
    \leaders\hbox{$\m@th\mkern \@dotsep mu\hbox{.}\mkern \@dotsep mu$}\hfill
    \nobreak
    \hbox to\@pnumwidth{\@tocpagenum{\ifnum#1=1\bfseries\fi#7}}\par
    \nobreak
    \endgroup
  \fi}
\AtBeginDocument{%
\expandafter\renewcommand\csname r@tocindent0\endcsname{0pt}
}
\def\l@subsection{\@tocline{2}{0pt}{2.5pc}{5pc}{}}
\makeatother

\usepackage{xcolor}
\usepackage[new]{old-arrows}
\usepackage{bm} 
\usepackage{multicol}
\usetikzlibrary{decorations.pathmorphing}
\DeclareMathOperator{\GL}{GL}

\usepackage{mathtools}

\def\lf{\left\lfloor}   
\def\rf{\right\rfloor}

\newcommand\blfootnote[1]{%
  \begingroup
  \renewcommand\thefootnote{}\footnote{#1}%
  \addtocounter{footnote}{-1}%
  \endgroup
}

\date{} 

\begin{document}
\hypersetup{citecolor=blue}
\hypersetup{linkcolor=red}

\begin{center}
  \LARGE 
Local newforms, Pa\v{s}k\={u}nas-Stevens Whittaker functions, \\ and explicit test vectors for $\GL(n)\times \GL(n)$
  \normalsize
  \\
  \bigskip
  Alexandros Groutides \par 

  Instituto Matemáticas de la Universidad de Sevilla\par
\end{center}

\begin{abstract}
   Let $F/\mathbf{Q}_p$ be a finite extension. Recently, Girsch-Kurinczuk have obtained new formulas for local newforms in depth-zero and minimax integral-depth cuspidal representations of $\GL_n(F)$. One of their main results is an averaging formula which provides an integral representation of the Whittaker newform in terms of the Pa\v{s}k\={u}nas-Stevens Whittaker function. In the present paper, we first prove a reverse averaging formula by establishing an integral representation of the Pa\v{s}k\={u}nas-Stevens Whittaker function in terms of the Whittaker newform. Together, these formulas give an explicit two-way passage between these two distinguished vectors. 
   
   We then use our averaging formula and earlier work of Kurinczuk-Matringe to give new explicit test vectors in terms of Whittaker newforms for cuspidal $\GL(n)\times\GL(n)$ pairs. More precisely, we show that if $\pi_1,\pi_2$ are depth-zero or minimax integral-depth cuspidal representations of $\GL_n(F)$, with $L(\pi_1,\pi_2,s)\neq 1$, then
$$L(\pi_1,\pi_2,s)=Z(W_{\pi_1}^\mathrm{new},\widetilde{W}_{\pi_2}^{\mathrm{new}},\mathbf{1}_{\mathcal{O}_{F}^n\Sigma},s),$$
  where $Z$ is the $\GL(n)\times \GL(n)$ local zeta integral, $W_{\pi_i}^\mathrm{new}$ is the Whittaker newform, $\widetilde{W}_{\pi_2}^\mathrm{new}$ is an explicit average of the Whittaker newform, and $\Sigma$ is a simple diagonal matrix depending on the conductor of $\pi_i.$
\end{abstract}
\blfootnote{Work on this paper began while the author was funded by ERC101001051: Shimura varieties and the
Birch–Swinnerton-Dyer conjecture. 

This paper was completed while the author was funded by ATR2024-154613: La correspondencia de Langlands local y el dual unitario de los grupos $p$-ádicos clásicos.}
\vspace{-2em}
\section{Introduction}
\subsection{Background} Classical newform theory was first developed in the work of Atkin-Lehner \cite{AtkinLehner}. Casselman \cite{casselman1973some} later gave its local representation-theoretic formulation for $\GL_2(F)$, and Jacquet-Piatetski-Shapiro-Shalika \cite{Jacquet1981}, Jacquet \cite{jacquet2009archimedean}, and Matringe \cite{Matringeessential} extended it to all irreducible generic complex representations of $\GL_n(F)$, where $F$ is a finite extension of $\mathbf{Q}_p$. One of the main results of this theory is the existence and uniqueness of certain distinguished vectors called \textit{local newforms}. More precisely, if $K_n(m)$ denotes the open compact subgroup of $\GL_n(\mathcal{O}_F)$ whose bottom row is congruent to $(0,\dots,0,1)$ modulo $\varpi_F^m$, for $m\in\mathbf{Z}_{\geq 0}$, and if $\pi$ is an irreducible generic complex representation of $\GL_n(F)$, then there exists a unique integer $c(\pi)\in\mathbf{Z}_{\geq 0}$ called the conductor of $\pi$, such that
\begin{align}
    \pi^{K_n(m)} \simeq
    \begin{dcases*}
        \mathbf{C}, & if $m=c(\pi)$\\
        0,          & if $m<c(\pi)$.
    \end{dcases*}
\end{align}
The \textit{newform} of $\pi$ is the unique, up to scalars, non-zero vector in the one-dimensional space $\pi^{K_n(c(\pi))}$. If $\pi$ is unramified, then $c(\pi)=0$, and the newform is nothing more than the spherical vector. 

These distinguished elements and their Whittaker realizations often turn out to be \textit{test vectors}. In particular, by a result of Kim \cite{kim2010test}, if $\pi_1,\pi_2$ are generic representations of $\GL_n(F)$ with at least one of them unramified, then the pair of \textit{Whittaker newforms}, together with an appropriate Schwartz function, represents the $\GL(n)\times\GL(n)$ Rankin-Selberg $L$-factor $L(\pi_1,\pi_2,s)$ under the $\GL(n)\times\GL(n)$ local zeta integral defined by Jacquet-Piatetski-Shapiro-Shalika \cite{jacquet1983rankin}. The situation is considerably subtler when both representations are ramified. In this situation the Whittaker newforms are not, in general, known to be test vectors for the $\GL(n)\times\GL(n)$ zeta integral.

On the other hand, if one considers cuspidal representations of $\GL_n(F)$, then another distinguished vector arises, this time from the theory of types. This theory, developed by Bushnell-Kutzko \cite{Bushnel}, realizes cuspidal representations of $\GL_n(F)$ by compact induction from maximal simple types. Using this realization, Pa\v{s}k\={u}nas-Stevens \cite{PaskunasStevens2008} construct an explicit Whittaker function with small support, which is the unique, up to scalars, non-zero element of a certain isotypic component of the underlying cuspidal representation. We refer to this distinguished element as the \textit{Pa\v{s}k\={u}nas-Stevens Whittaker function}. These special Whittaker functions also turn out to be test vectors. In particular, by a result of Kurinczuk-Matringe \cite{TestvectorsRobNadir}, if $\pi_1,\pi_2$ are cuspidal representations of $\GL_n(F)$ with $L(\pi_1,\pi_2,s)$ not identically equal to $1$ - in other words $\pi_2$ is an unramified twist of $\pi_1^\vee$ - then the pair of Pa\v{s}k\={u}nas-Stevens Whittaker functions, together with an appropriate Schwartz function, represents the Rankin-Selberg $L$-factor $L(\pi_1,\pi_2,s)$. Hence, these Whittaker functions resolve the explicit test vector problem for such cuspidal pairs. However, they do not give information on whether one can obtain explicit test vectors in terms of Whittaker newforms.

Recent work of Girsch-Kurinczuk \cite{girsch2025newformscuspidalrepresentations} begins to explore the connection between these two special vectors arising in cuspidal representations. For depth-zero and minimax integral-depth cuspidal representations of $\GL_n(F)$, one of the main results of Girsch-Kurinczuk \cite{girsch2025newformscuspidalrepresentations} is an averaging formula which provides an integral representation of the Whittaker newform in terms of the Pa\v{s}k\={u}nas-Stevens Whittaker function (see \Cref{thm new in terms of PS} and \Cref{thm: new interms of PS minimax}).
\subsection{Main results}
The first purpose of the present paper is to prove a reverse averaging formula to the one in \cite{girsch2025newformscuspidalrepresentations} and thus provide an explicit two-way passage between the type-theoretic Pa\v{s}k\={u}nas-Stevens Whittaker function and the globally motivated Whittaker newform.

\subsubsection{Averaging formulas}\label{sec: averaging formulas}
Let $\psi_0$ be a non-trivial additive character of $F$ of conductor $\mathcal{O}_F$, and let $\psi_1$ be the associated conductor-$\varpi_F\mathcal{O}_F$ additive character given by $x\mapsto\psi_0(\varpi_F^{-1}x)$. Let $\Sigma_n:=\mathrm{diag}(\varpi_F^{n-1},\varpi_F^{n-2},\dots,\varpi_F,1)\in\GL_n(F)$, let $K_n^1$ be the first principal congruence subgroup of $\GL_n(\mathcal{O}_F)$ given by the kernel of reduction modulo $\varpi_F$, and let $U_n$ denote the standard upper unipotent radical of $\GL_n$. Then, the character $\psi_1$ extends to a well-defined character of the group $U_n(\mathcal{O}_F)K_n^1$. Our depth-zero averaging formula is as follows.
\begin{theorem}\emph{(Depth zero, \Cref{thm PS in terms of new})}
     Let $\pi$ be a depth-zero irreducible cuspidal representation of $\GL_n(F)$. Then, the Pa\v{s}k\={u}nas-Stevens Whittaker function $W_{\pi,\psi_1}^\mathrm{PS}\in \mathcal{W}(\pi,\psi_1)$ admits the following integral representation in terms of the Whittaker newform $W_{\pi,\psi_0}^\mathrm{new}\in\mathcal{W}(\pi,\psi_0)$:
    $$W_{\pi,\psi_1}^\mathrm{PS}(g)=\int_{U_n(\mathcal{O}_F)K_n^1} W_{\pi,\psi_0}^\mathrm{new}(\Sigma_n^{-1}gx \Sigma_n) \psi_1(x^{-1})\ dx,\ \mathrm{for\ all}\ g\in \GL_n(F),$$
    for an appropriately normalized Haar measure $dx$ on $U_n(\mathcal{O}_F)K_n^1.$
\end{theorem}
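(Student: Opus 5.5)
Write $J:=U_n(\mathcal{O}_F)K_n^1$ and define
$$\Phi(g):=\int_{J} W_{\pi,\psi_0}^\mathrm{new}(\Sigma_n^{-1}gx\Sigma_n)\,\psi_1(x^{-1})\,dx .$$
The plan is to show that $\Phi$ is a nonzero element of the line characterized by Pa\v{s}k\={u}nas-Stevens, and then to fix the scalar by evaluating at the identity.

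First, $\Phi$ lies in $\mathcal{W}(\pi,\psi_1)$. The function $g\mapsto W^\mathrm{new}_{\pi,\psi_0}(\Sigma_n^{-1}g\Sigma_n)$ is the Whittaker function of $\pi(\Sigma_n)W^\mathrm{new}$ conjugated. For $u\in U_n(F)$ we have $\Sigma_n^{-1}u\Sigma_n$ with superdiagonal entries $u_{i,i+1}\varpi_F^{-1}$. Hence this function lies in $\mathcal{W}(\pi,\psi_1)$, because $\psi_0(\varpi_F^{-1}\cdot)=\psi_1$. So $\Phi$ is the $\psi_1$-isotypic projection, over $J$ acting by right translation, of a vector in $\mathcal{W}(\pi,\psi_1)$. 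Consequently $\Phi(gy)=\psi_1(y)\Phi(g)$ for $y\in J$, and $\Phi$ lies in the $(J,\psi_1)$-isotypic subspace.

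By \cite{PaskunasStevens2008}, in the depth-zero case this isotypic subspace is one-dimensional and spanned by $W^\mathrm{PS}_{\pi,\psi_1}$. I would recall this from the earlier section on depth-zero types, where $\pi=\mathrm{c\text{-}Ind}_{F^\times\GL_n(\mathcal{O}_F)}\Lambda$ and $\Lambda$ is inflated from a cuspidal representation of $\GL_n(\mathbf{F}_q)$. Uniqueness then comes from the uniqueness of a Gelfand-Graev line in a cuspidal representation of $\GL_n(\mathbf{F}_q)$, together with Mackey theory. Therefore $\Phi=c\,W^\mathrm{PS}_{\pi,\psi_1}$, and it remains to show $c\ne0$ and to normalize $c=1$.

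For the constant I would evaluate at $g=1$. The support of $W^\mathrm{PS}$ is $U_n(F)F^\times J$, and it is normalized by $W^\mathrm{PS}(1)=1$. It then suffices to compute
$$\Phi(1)=\int_J W^\mathrm{new}(\Sigma_n^{-1}x\Sigma_n)\psi_1(x^{-1})\,dx .$$
Decompose $x=uk$ with $u\in U_n(\mathcal{O}_F)$ and $k\in K_n^1$. Since $\Sigma_n^{-1}u\Sigma_n$ is a unipotent element whose character cancels against $\psi_1(u^{-1})$, the integral reduces to an integral over $K_n^1$ of $W^\mathrm{new}(\Sigma_n^{-1}k\Sigma_n)\psi_1(k^{-1})$. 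Here one uses that depth-zero cuspidals have conductor $c(\pi)=n$, and that $\Sigma_n^{-1}K_n^1\Sigma_n$ interacts with $K_n(n)$ via an Iwahori-type factorization. Write $k$ as lower unipotent times diagonal times upper unipotent. After conjugation by $\Sigma_n^{-1}$, the lower part has entries in $\mathfrak{p}^{1+(i-j)}$, and in particular lands in $K_n(n)$ in its last row. The diagonal part is in $1+\mathfrak{p}$, which is fine since the central character is unramified. The upper part becomes $U_n(\mathfrak{p}^{1-(j-i)})$, whose entries can be pulled to the left through the Whittaker character.

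Carrying this out carefully, I expect the integrand to reduce to a constant multiple of $W^\mathrm{new}(1)$, which is nonzero by the standard normalization of Whittaker newforms. Rescaling the measure then gives $c=1$. The main obstacle is exactly this step: checking that, after conjugation, every piece of $K_n^1$ either lies in $K_n(n)$ or contributes a character that cancels $\psi_1(x^{-1})$. The entries of the lower-triangular part far from the diagonal need care, as does whether the ordering of the factorization produces commutator terms. If the direct computation is messy, the fallback is to use the paper's earlier averaging formula of Girsch-Kurinczuk (\Cref{thm new in terms of PS}). Substituting it into $\Phi(1)$ turns the computation into an integral of $W^\mathrm{PS}$ over a double coset, supported only on $J$, and this again yields a nonzero constant.
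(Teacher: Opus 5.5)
\textbf{There is a genuine gap: the non-vanishing of $\Phi(1)$ for $n\geq 3$.}

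Your formal reduction agrees with the paper's. You show that $\Phi\in\mathcal{W}(\pi,\psi_1)$, that it lies in the one-dimensional $(U_n(\mathcal{O}_F)K_n^1,\psi_1)$-isotypic line spanned by $W^{\mathrm{PS}}_{\pi,\psi_1}$, and that everything comes down to $\int_{K_n^1}W^{\mathrm{new}}_{\pi,\psi_0}(\Sigma_n^{-1}k\Sigma_n)\,dk\neq 0$. That non-vanishing is the real content of the theorem, and your argument for it fails once $n\geq 3$.

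Since $(\Sigma_n^{-1}k\Sigma_n)_{ij}=\varpi_F^{i-j}k_{ij}$, the conjugated lower-unipotent factor has last-row entries $\varpi_F^{n-j}k_{nj}\in\mathfrak{p}_F^{n-j+1}$ for $1\leq j\leq n-1$. These lie in $\mathfrak{p}_F^{n}$ only for $j=1$. Already for $n=3$ the $(3,2)$ entry is only in $\mathfrak{p}_F^2$. So this factor lies in $K_n(2)$ but not in $K_n(n)$, and your claim that it ``lands in $K_n(n)$ in its last row'' is false. Since $c(\pi)=n$, nothing in your proposal controls $W^{\mathrm{new}}_{\pi,\psi_0}$ along these elements, so you have no grounds for the integrand to reduce to $W^{\mathrm{new}}_{\pi,\psi_0}(1)$.

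The other issues you flagged are harmless. For the ordering, factor $k$ as upper $\cdot$ diagonal $\cdot$ lower; the upper factor then contributes $\psi_0$ of the superdiagonal entries $\varpi_F^{-1}k_{i,i+1}\in\mathcal{O}_F$, which is $1$. For the diagonal, what you need is $\omega_\pi|_{1+\mathfrak{p}_F}=1$, which depth zero gives; unramifiedness is not required. Your computation is exactly the paper's $\GL_2$ argument, and the paper notes that it does not extend past $n=2$.

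Your fallback is not a proof as written. It relies on $W^{\mathrm{PS}}_{\pi,\psi_1}$ being supported on $F^\times U_n(F)U_n(\mathcal{O}_F)K_n^1$, which is wrong. It is supported on $Z_nU_n(F)K_n$, and on $K_n$ it equals the finite-field Bessel function $B_{\tau,\overline{\psi_1}}$, which does not vanish off $U_n(k_F)$. What is missing is a mechanism that cuts the integral down to a finite-field quantity, together with a proof that this quantity is nonzero.

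The paper supplies this as follows.
\begin{enumerate}
\item It writes $W^{\mathrm{new}}_{\pi,\psi_0}(g)=\mathbf{Y}_1(\Sigma_n g\cdot f^{\mathrm{new}})$, so the integral equals $\mathbf{Y}_1(\mathbf{f})$ with $\mathbf{f}=\int_{K_n^1}k\Sigma_n\cdot f^{\mathrm{new}}\,dk\in\pi^{K_n^1}$.
\item It uses that $K_n^1$-invariant vectors in $\mathrm{c}\text{-}\mathrm{Ind}_{Z_nK_n}^{G_n}\widetilde{\tau}$ are supported on $Z_nK_n$ (Mackey theory plus cuspidality of $\tau$). Hence $\pi^{K_n^1}\simeq\tau$ via $f\mapsto f(1)$, and $\mathbf{f}$ is determined by $f^{\mathrm{new}}(\Sigma_n)$.
\item It inputs the Girsch--Kurinczuk formula $f^{\mathrm{new}}(\Sigma_n)=\sum_{\overline{b}\in B^{\mathrm{op}}_{n-1}(k_F)}\overline{b}\cdot B_{\tau,\overline{\psi_1}}$.
\item This gives the sum $\sum_{\overline{b}}B_{\tau,\overline{\psi_1}}(\overline{b})$, which is nonzero. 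Indeed, a cuspidal Bessel function restricted to the mirabolic subgroup is supported on $U_n(k_F)$, so the sum is $B_{\tau,\overline{\psi_1}}(1)$.
\end{enumerate}

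Your fallback can be repaired into the same argument. By the same support lemma, $\int_{K_n^1}W^{\mathrm{PS}}_{\pi,\psi_1}(kg)\,dk$ vanishes unless $g\in Z_nK_n$, and there it equals $\mathrm{vol}(K_n^1)\,B_{\tau,\overline{\psi_1}}(\overline{g})$. Moreover, $\Sigma_nK_n(n)\Sigma_n^{-1}\cap K_n$ reduces modulo $\mathfrak{p}_F$ to $\mathrm{diag}(B^{\mathrm{op}}_{n-1}(k_F),1)$. Together these produce the same Bessel sum. None of these three ingredients (the support lemma, this reduction, or the non-vanishing of the sum) appears in your proposal.
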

To state our positive-depth averaging formula, we recall that every minimax integral-depth cuspidal representation of $\GL_n(F)$ is constructed from a simple stratum $[\Lambda,m,0,\beta]$ and has a Bushnell-Kutzko model as a compactly induced representation from a compact-mod-center subgroup $\mathbf{J}:=E^\times U^{\lf (m+1)/2 \rf}(\Lambda)$, where $E=F[\beta]$ is a degree-$n$ unramified field extension and $U^{\lf (m+1)/2 \rf}(\Lambda)=1+M_{n,n}(\varpi_F^{\lf (m+1)/2 \rf}\mathcal{O}_F)$ with respect to a certain basis $\mathcal{B}'$ of $E/F$. For more details on these, we refer the reader to \Cref{sec: hered. order}, \Cref{sec: setup}, and \Cref{sec: simple chars}. Let $\mathbf{t}_m:=\mathrm{diag}(\varpi_F^{(n-1)m},\varpi_F^{(n-2)m},\dots, \varpi_F^m,1)\in\GL_n(F)$, $\Sigma_{m,n}:=\mathbf{t}_m\Sigma_n\in \GL_n(F)$, and $\Psi_m:=\psi_1^{\mathbf{t}_m^{-1}}$ regarded as a character of $U_n(F)$ as in \Cref{sec: add. chars}. Our positive-depth averaging formula is as follows.
\begin{theorem}\emph{(Positive depth, \Cref{thm: PS in terms of new minimax})}
     Let $\Lambda,m,\beta,\mathcal{B}'$ be as in \emph{\Cref{sec: setup}}, and the character $\theta_{\Psi_m}$ be as in \emph{\Cref{sec: simple chars}}. Let $\pi=\mathrm{c}\text{-}\mathrm{Ind}_{\mathbf{J}}^{\GL_n(F)}(\lambda)$ be an irreducible cuspidal minimax representation of $\GL_n(F)$ with integral depth, constructed as in \emph{\Cref{sec: simple chars}}. The Pa\v{s}k\={u}nas-Stevens Whittaker function $W_{\pi,\Psi_m}^\mathrm{PS}\in \mathcal{W}(\pi,\Psi_m)$ admits the following integral representation in terms of the Whittaker newform $W_{\pi,\psi_0}^\mathrm{new}\in \mathcal{W}(\pi,\psi_0)$:
    $$W_{\pi,\Psi_m}^\mathrm{PS}(g)=\int_{(U_n(F)\cap J^1)H^1}W_{\pi,\psi_0}^\mathrm{new}(\Sigma_{m,n}^{-1} gx \Sigma_{m,n})\theta_{\Psi_m}(x^{-1})\ dx,\ \mathrm{for\ all}\ g\in \GL_n(F),$$
    for an appropriately normalized Haar measure $dx$ on $(U_n(F)\cap J^1)H^1$.
\end{theorem}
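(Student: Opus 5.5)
The strategy is to recognize the right-hand side as a projection onto the Pa\v{s}k\={u}nas-Stevens isotypic component. By uniqueness of that component, the right-hand side must then be a scalar multiple of $W^{\mathrm{PS}}_{\pi,\Psi_m}$. Finally, the normalization of $dx$ is chosen so that the scalar equals $1$.

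\textbf{Step 1: change of additive character.} For $W\in\mathcal{W}(\pi,\psi_0)$, set $W'(g):=W(\Sigma_{m,n}^{-1}g\Sigma_{m,n})$. We check that $W'\in\mathcal{W}(\pi,\Psi_m)$. Conjugation by $\Sigma_{m,n}=\mathbf{t}_m\Sigma_n$ scales the simple root coordinate $u_{i,i+1}$ by $\varpi_F^{-(m+1)}$. Hence
$$\psi_0\big(\textstyle\sum_i \varpi_F^{-(m+1)}u_{i,i+1}\big)=\psi_1^{\mathbf{t}_m^{-1}}(u)=\Psi_m(u),$$
with the conventions of \Cref{sec: add. chars}. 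In particular, $W'=\pi(\Sigma_{m,n})$-translate of $W^{\mathrm{new}}_{\pi,\psi_0}$ in the appropriate model. Since $\Sigma_{m,n}$ normalizes nothing in particular, we then form
$$\mathcal{P}W'(g):=\int_{(U_n(F)\cap J^1)H^1}W'(gx)\,\theta_{\Psi_m}(x^{-1})\,dx .$$
This is exactly the right-hand side of the statement.

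\textbf{Step 2: the projection lands in the correct isotypic component.} Here I use the results of \Cref{sec: simple chars}, namely that $\theta_{\Psi_m}$ is a well-defined character of $(U_n(F)\cap J^1)H^1$. It extends the simple character $\theta$ on $H^1$ and restricts to $\Psi_m$ on $U_n(F)\cap J^1$. By Pa\v{s}k\={u}nas-Stevens, the $\theta_{\Psi_m}$-isotypic subspace of $\pi$ is one-dimensional and spanned by $W^{\mathrm{PS}}_{\pi,\Psi_m}$. Since $\mathcal{P}$ is (up to volume) the idempotent projecting onto that isotypic part, we get $\mathcal{P}W'=c\,W^{\mathrm{PS}}_{\pi,\Psi_m}$ for some constant $c$.

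\textbf{Step 3: nonvanishing of $c$ (the main obstacle).} Evaluating at $g=1$, the support properties of $W^{\mathrm{PS}}$ reduce $c\neq0$ to showing $\mathcal{P}W'(1)\neq0$. That is, I need
$$\int W^{\mathrm{new}}_{\pi,\psi_0}(\Sigma_{m,n}^{-1}x\Sigma_{m,n})\,\theta_{\Psi_m}(x^{-1})\,dx\neq 0 .$$
I would handle this with the Girsch-Kurinczuk formula (\Cref{thm: new interms of PS minimax}), which writes $W^{\mathrm{new}}$ as an average of translates of $W^{\mathrm{PS}}$ over a compact set related to $K_n(c(\pi))$. Substituting it turns $\mathcal{P}W'(1)$ into a double integral of $W^{\mathrm{PS}}$. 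By uniqueness, $\mathcal{P}$ applied to $W^{\mathrm{PS}}$-translates is a scalar. It then suffices to show that the combined integrand is supported in $\mathbf{J}$-cosets on which $W^{\mathrm{PS}}$ is given by $\lambda$-matrix coefficients that agree with $\theta_{\Psi_m}$.

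As an alternative route to the same conclusion, argue by contradiction. If $c=0$, then $W'$ is orthogonal to the $\theta_{\Psi_m}$-isotypic line. Pairing with the Girsch-Kurinczuk expression, i.e.\ the adjointness of the two averaging operators, then forces $\langle W^{\mathrm{new}},W^{\mathrm{new}}\rangle=0$, a contradiction. The delicate part is the explicit comparison of the groups $\Sigma_{m,n}^{-1}((U_n\cap J^1)H^1)\Sigma_{m,n}$ with $K_n(c(\pi))$, using $c(\pi)=n(m+1)$. It is this containment that allows the integrals to collapse.

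\textbf{Step 4: normalization.} Rescale $dx$ by $c^{-1}$. The depth-zero case (\Cref{thm PS in terms of new}) is the specialization $m=0$, $\theta_{\Psi_0}=\psi_1$ on $U_n(\mathcal{O}_F)K_n^1$, and the argument above can be checked against it.
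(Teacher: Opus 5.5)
Your Steps 1--2 match the paper's reduction (\Cref{lem: reduction to W non-zero minimax}): the average lies in the one-dimensional $((U_n(F)\cap J^1)H^1,\theta_{\Psi_m})$-isotypic line, so everything hinges on non-vanishing. For that, your \emph{alternative} route in Step 3 is correct, and it is genuinely different from the paper's argument and much softer.

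Spelled out, it goes as follows. Put $H'=(U_n(F)\cap J^1)H^1$, $P=\int_{H'}\theta_{\Psi_m}(x^{-1})\pi(x)\,dx$ and $Q=\int_{K_n(n(m+1))}\pi(k)\,dk$, both with total volume $1$. Identify $\mathcal{W}(\pi,\psi_0)$ and $\mathcal{W}(\pi,\Psi_m)$ via the mutually inverse $G_n$-isomorphisms $W\mapsto W(\Sigma_{m,n}^{-1}\,\cdot\,)$ and $W\mapsto W(\Sigma_{m,n}\,\cdot\,)$. Then the right-hand side of the statement is $P\pi(\Sigma_{m,n})v^{\mathrm{new}}$, while \Cref{thm: new interms of PS minimax} says $v^{\mathrm{new}}=Q\pi(\Sigma_{m,n}^{-1})v^{\mathrm{PS}}\neq0$. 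After a twist by $|\det|^{s_0}$, the cuspidal $\pi$ is unitarizable; the twist changes neither $P$, $Q$ nor the two lines, and only rescales $\pi(\Sigma_{m,n})$. With a $G_n$-invariant inner product, $P$ and $Q$ are self-adjoint idempotents, so
\begin{align*}
\langle P\pi(\Sigma_{m,n})v^{\mathrm{new}},v^{\mathrm{PS}}\rangle
&=\langle v^{\mathrm{new}},\pi(\Sigma_{m,n}^{-1})Pv^{\mathrm{PS}}\rangle
=\langle Qv^{\mathrm{new}},\pi(\Sigma_{m,n}^{-1})v^{\mathrm{PS}}\rangle\\
&=\langle v^{\mathrm{new}},Q\pi(\Sigma_{m,n}^{-1})v^{\mathrm{PS}}\rangle
=\langle v^{\mathrm{new}},v^{\mathrm{new}}\rangle\neq0 .
\end{align*}

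This uses only the forward formula, the one-dimensionality of the $\theta_{\Psi_m}$-line, and positivity. It bypasses the paper's whole analysis of the auxiliary integral $\mathbf{I}$: the bi-equivariance, the equality $I_{G_n}(\theta)=\mathbf{J}$, and the identification with the Pa\v{s}k\={u}nas--Stevens Bessel function. It also avoids the input $f^{\mathrm{new}}(\Sigma_{m,n})(1)=1$ from Girsch--Kurinczuk's Theorem 6.5. The same argument works verbatim in depth zero, where it would replace \Cref{prop: non-vanishing for general n} and the non-vanishing of $\sum_{\overline{b}}B_{\tau,\overline{\psi_1}}(\overline{b})$. It even shows that $P\pi(\Sigma_{m,n})Q\pi(\Sigma_{m,n}^{-1})$ acts on the Pa\v{s}k\={u}nas--Stevens line by the positive scalar $\|v^{\mathrm{new}}\|^2/\|v^{\mathrm{PS}}\|^2$.

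What the paper's route buys in exchange is independence from any Hermitian structure or positivity, plus extra explicit information. It shows that the $H^1$-average of $W^{\mathrm{PS}}_{\pi,\Psi_m}$ is supported on $\mathbf{J}$ and equals the Bessel function there. It also exhibits $\mathbf{W}_m(1)$ as an explicit multiple of $f^{\mathrm{new}}(\Sigma_{m,n})(1)$.

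Three parts of your write-up need fixing. First, your initial route in Step 3 does not close. Even granting support in $\mathbf{J}$ and the Bessel-function description, you are left with an average of Bessel-function values over $\mathbf{J}\cap\Sigma_{m,n}K_n(n(m+1))\Sigma_{m,n}^{-1}$. Nothing in your sketch shows this average is non-zero; the paper gets it only by recognising the average as $f^{\mathrm{new}}(\Sigma_{m,n})(1)$.

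Second, the ``delicate containment'' is a red herring. The group $\Sigma_{m,n}^{-1}(U_n(F)\cap J^1)H^1\Sigma_{m,n}$ is not even contained in $K_n$. For example, take the unipotent element of $U^{\lfloor m/2\rfloor+1}(\Lambda)\subseteq H^1$ with $(1,2)$-entry $\varpi_F^{\lfloor m/2\rfloor+1}$. Conjugation by $\Sigma_{m,n}^{-1}$ turns its $(1,2)$-entry into one of valuation $\lfloor m/2\rfloor-m<0$. What collapses the unipotent part of the integral in the paper is $\Sigma_{m,n}^{-1}(U_n(F)\cap J^1)\Sigma_{m,n}\subseteq U_n(F)$ together with $\psi_0(\Sigma_{m,n}^{-1}u\Sigma_{m,n})=\Psi_m(u)$. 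The adjointness argument needs no containment at all.

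Third, state the unitarity input explicitly. Your words ``orthogonal'' and $\langle W^{\mathrm{new}},W^{\mathrm{new}}\rangle$ only make sense relative to a $G_n$-invariant inner product.
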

\subsubsection{Explicit test vectors}
Now, consider a pair $\pi_1,\pi_2$ of cuspidal representations of $\GL_n(F)$. The $\GL(n)\times\GL(n)$ local zeta integral of Jacquet-Piatetski-Shapiro-Shalika \cite{jacquet1983rankin} is given by 
\begin{align}\label{eq: zeta integral intro}
Z(W_1,W_2,\Phi,s):=\int_{U_n(F)\backslash\GL_n(F)} W_1(g)W_2(g)\Phi((0,\dots,0,1)g)|\det(g)|^s\ dg
\end{align}
for $W_1\in\mathcal{W}(\pi_1,\psi_0)$, $W_2\in\mathcal{W}(\pi_2,\psi_0^{-1})$, $\Phi\in\mathcal{S}(F^n)$.

The second purpose of this paper is to give explicit test vectors in terms of Whittaker newforms, which, under \eqref{eq: zeta integral intro}, represent the $L$-factor $L(\pi_1,\pi_2,s)$ whenever it is not identically equal to $1$ - in other words, whenever $\pi_2$ is an unramified twist of $\pi_1^\vee$. The main ingredients for this are our averaging formulas from \Cref{sec: averaging formulas} and the main result of Kurinczuk-Matringe \cite{TestvectorsRobNadir}.

\begin{theorem}\emph{(Depth zero, \Cref{thm test vector depth zero})}
 Let $\pi$ be a depth-zero irreducible cuspidal representation of $\GL_n(F)$, and $\chi$ an unramified character of $F^\times$. Write $\chi\pi$ for the twist $\chi(\det(-))\otimes\pi$. Consider the following input data:
    \begin{itemize}
        \item The Whittaker newform $W_{\chi \pi,\psi_0}^\mathrm{new}\in \mathcal{W}(\chi\pi,\psi_0)$,
        \item The average $\widetilde{W}_{\pi^\vee,\psi_0^{-1}}^\mathrm{new}\in \mathcal{W}(\pi^\vee,\psi_0^{-1})$ of the dual Whittaker newform, 
$$\widetilde{W}_{\pi^\vee,\psi_0^{-1}}^\mathrm{new}:=\int_{U_n(\mathcal{O}_F)K_n^1} W_{\pi^\vee,\psi_0^{-1}}^\mathrm{new}( (-)\Sigma_n^{-1}y\Sigma_n) \psi_1(y)\ dy,$$
        \item The Schwartz function $\Phi_n:=\mathbf{1}_{\mathcal{O}_F^n\cdot \Sigma_n}\in \mathcal{S}(F^n)$.
    \end{itemize}
    Then, up to appropriate normalization of the quotient measure $dg$ appearing in \eqref{eq: zeta integral intro}, we have 
    $$L(\chi\pi,\pi^\vee,s)=Z(W_{\chi \pi,\psi_0}^\mathrm{new},\widetilde{W}_{\pi^\vee,\psi_0^{-1}}^\mathrm{new}, \Phi_n,s). $$
\end{theorem}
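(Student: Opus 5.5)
The plan is to reduce the statement to the Kurinczuk--Matringe test vector theorem \cite{TestvectorsRobNadir} for the pair of Pa\v{s}k\={u}nas--Stevens Whittaker functions, using the depth-zero averaging formula (\Cref{thm PS in terms of new}) together with a change of variables.

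\textbf{Step 1 (the dual side).} The averaged function $\widetilde{W}_{\pi^\vee,\psi_0^{-1}}^\mathrm{new}$ is, by construction, the reverse averaging formula for $\pi^\vee$ with $\psi$ replaced by $\psi^{-1}$, evaluated at a conjugated argument. Applying \Cref{thm PS in terms of new} to $\pi^\vee$ and the characters $\psi_0^{-1},\psi_1^{-1}$ gives
$$W^{\mathrm{PS}}_{\pi^\vee,\psi_1^{-1}}(h)=\int W^{\mathrm{new}}_{\pi^\vee,\psi_0^{-1}}(\Sigma_n^{-1}hy\Sigma_n)\psi_1(y)\,dy .$$
Comparing this with the definition, we get
$$\widetilde{W}^{\mathrm{new}}_{\pi^\vee,\psi_0^{-1}}(g)=W^{\mathrm{PS}}_{\pi^\vee,\psi_1^{-1}}(g\Sigma_n^{-1}).$$
Here one checks that $g\Sigma_n^{-1}$ transforms correctly: conjugation by $\Sigma_n$ carries $\psi_1$ on $U_n$ to $\psi_0$, since the superdiagonal entries get scaled by $\varpi_F^{-1}$ or by $\varpi_F$, and the direction has to be checked.

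\textbf{Step 2 (the newform side).} I would similarly relate $W^{\mathrm{new}}_{\chi\pi,\psi_0}$ to the Pa\v{s}k\={u}nas--Stevens function. A depth-zero twist satisfies $W^{\mathrm{new}}_{\chi\pi}=\chi(\det)W^{\mathrm{new}}_\pi$, and likewise for $W^{\mathrm{PS}}$. Rather than inverting the averaging formula on this side, I would keep $W^{\mathrm{new}}$ and substitute $g\mapsto g\Sigma_n$ in the zeta integral. The integrand becomes
$$W^{\mathrm{new}}(g\Sigma_n)\,W^{\mathrm{PS}}_{\pi^\vee}(g)\,\mathbf{1}_{\mathcal{O}_F^n}((0,\dots,0,1)g)\,|\det g|^s ,$$
up to the constant $|\det\Sigma_n|^s$, and the change of variables must be compatible with the left $U_n$-quotient, which is the same $\psi_0\leftrightarrow\psi_1$ check. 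Note that $(0,\dots,0,1)\Sigma_n=(0,\dots,0,1)$, so $\Phi_n$ turns into the indicator of $\mathcal{O}_F^n$ and no extra shift appears.

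\textbf{Step 3 (unfolding the PS function).} Next I would use the small support and equivariance of $W^{\mathrm{PS}}_{\pi^\vee}$: it is supported on $U_n(F)\,F^\times\,U_n(\mathcal{O}_F)K_n^1$ (or its $\mathbf{J}$-analogue), transforms by $\psi_1^{-1}$ on the right under $U_n(\mathcal{O}_F)K_n^1$, and is then determined by $\pi^\vee$ restricted to $K_n$. Inserting Girsch--Kurinczuk's formula (\Cref{thm new in terms of PS}) for $W^{\mathrm{new}}(g\Sigma_n)$ as an average of $W^{\mathrm{PS}}_{\chi\pi,\psi_1}$ over $K_n(c)$-type cosets, the integral becomes a finite sum of Kurinczuk--Matringe integrals $Z(W^{\mathrm{PS}}_{\chi\pi}(\cdot k),W^{\mathrm{PS}}_{\pi^\vee},\Phi',s)$. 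I expect only the identity coset to survive: for $k\notin U_n(\mathcal{O}_F)K_n^1$, the matrix coefficient of $\sigma\otimes\sigma^\vee$ against the $\psi_1$-isotypic (Gelfand--Graev) vectors should cancel on average. Applying \cite{TestvectorsRobNadir} then yields $L(\chi\pi,\pi^\vee,s)$ up to a positive constant, which is absorbed into the normalization of $dg$.

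\textbf{Main obstacle.} The hard part is Step 3: showing that the cross terms vanish, or combine to give exactly a multiple of the Kurinczuk--Matringe integral. I would first try to do this through the $K_n$-isotypic projection. The product $W^{\mathrm{new}}(g\Sigma_n)\cdot\mathbf{1}$ restricted to the support of $W^{\mathrm{PS}}$ should, after integrating over $U_n(\mathcal{O}_F)K_n^1$ against $\psi_1$, reduce exactly to $W^{\mathrm{PS}}_{\chi\pi}$. This is precisely our averaging formula, applied by moving the $y$-average from the second function onto the first via right-invariance of $dg$ and of $\Phi$ under $K_n^1$.
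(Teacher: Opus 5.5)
Your closing sentence names the right mechanism, and it is in substance the paper's proof. The steps leading up to it, however, contain a concrete error.

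\textbf{Step 1.} The identity $\widetilde{W}^{\mathrm{new}}_{\pi^\vee,\psi_0^{-1}}(g)=W^{\mathrm{PS}}_{\pi^\vee,\psi_1^{-1}}(g\Sigma_n^{-1})$ is false. A right translate of a $\psi_1^{-1}$-Whittaker function is still $\psi_1^{-1}$-Whittaker, whereas $\widetilde{W}^{\mathrm{new}}_{\pi^\vee,\psi_0^{-1}}$ is $\psi_0^{-1}$-Whittaker. Setting $h=\Sigma_n g\Sigma_n^{-1}$ in the averaging formula gives the correct identity $\widetilde{W}^{\mathrm{new}}_{\pi^\vee,\psi_0^{-1}}(g)=W^{\mathrm{PS}}_{\pi^\vee,\psi_1^{-1}}(\Sigma_n g\Sigma_n^{-1})$.

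\textbf{Step 2.} The error propagates here. After $g\mapsto g\Sigma_n$, your integrand $W^{\mathrm{new}}(g\Sigma_n)W^{\mathrm{PS}}_{\pi^\vee}(g)$ transforms under $U_n(F)$ by the nontrivial character $\psi_0\psi_1^{-1}$, so it is not a function on $U_n(F)\backslash G_n$. Moreover, the factor $|\det\Sigma_n|^s=q^{-sn(n-1)/2}$ depends on $s$, so it could not be absorbed into the normalization of $dg$ anyway. The correct substitution is the conjugation $g\mapsto \Sigma_n^{-1}g\Sigma_n$. This substitution:
\begin{enumerate}
\item leaves $|\det g|^s$ unchanged and costs only a constant $\delta_{B_n}(\Sigma_n)^{\pm1}$;
\item turns $\Phi_n$ into $\Phi_0=\mathbf{1}_{\mathcal{O}_F^n}$, because $e_n\Sigma_n^{-1}=e_n$;
\item replaces the newform by $W'(g):=W^{\mathrm{new}}_{\chi\pi,\psi_0}(\Sigma_n^{-1}g\Sigma_n)\in\mathcal{W}(\chi\pi,\psi_1)$.
\end{enumerate}

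\textbf{Step 3.} Inserting \Cref{thm new in terms of PS} and expecting cross terms to ``cancel on average'' is not an argument. The support you quote for $W^{\mathrm{PS}}$ is also too small: it lives on $Z_nU_n(F)K_n$, governed there by the full Bessel function $B_{\tau,\overline{\psi_1}}$. More to the point, the detour is unnecessary, since no cross terms ever arise.

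The argument is as follows. We have $W^{\mathrm{PS}}_{\pi^\vee,\psi_1^{-1}}(gx)=\psi_1(x^{-1})W^{\mathrm{PS}}_{\pi^\vee,\psi_1^{-1}}(g)$, and $\Phi_0$, $|\det|^s$ and $dg$ are invariant under right translation by $U_n(\mathcal{O}_F)K_n^1\subseteq K_n$. So averaging over $x\in U_n(\mathcal{O}_F)K_n^1$ replaces $W'$ by $\int W^{\mathrm{new}}_{\chi\pi,\psi_0}(\Sigma_n^{-1}gx\Sigma_n)\psi_1(x^{-1})\,dx$. By \Cref{thm PS in terms of new} applied to $\chi\pi$, this is a nonzero multiple of $W^{\mathrm{PS}}_{\chi\pi,\psi_1}$, and \Cref{thm: main theorem of KurinczukMatringe} finishes.

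Note that the function to average is $W^{\mathrm{new}}(\Sigma_n^{-1}g\Sigma_n)$, not $W^{\mathrm{new}}(g\Sigma_n)$ as in your last sentence. Averaging the latter gives $c\,W^{\mathrm{PS}}_{\chi\pi,\psi_1}(\Sigma_n g)$, which again has the wrong Whittaker character.

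The paper runs this same argument from the other end:
\begin{enumerate}
\item it starts from the Kurinczuk--Matringe identity;
\item it substitutes the averaging formula into both Pa\v{s}k\={u}nas--Stevens functions;
\item it moves the $x$-average onto the second factor by a right translation in $g$ and collapses the double average;
\item it finally conjugates by $\Sigma_n$.
\end{enumerate}
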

\begin{theorem}\emph{(Positive depth, \Cref{thm: test vector pos. depth})}
    Let $\Lambda,m,\beta,\mathcal{B}'$ be as in \emph{\Cref{sec: setup}}, and the character $\theta_{\Psi_m}$ be as in \emph{\Cref{sec: simple chars}}. Let $\pi=\mathrm{c}\text{-}\mathrm{Ind}_{\mathbf{J}}^{\GL_n(F)}(\lambda)$ be an irreducible cuspidal minimax representation of $\GL_n(F)$ with integral depth, constructed as in \emph{\Cref{sec: simple chars}}. Let $\chi$ be an unramified character of $F^\times$ and consider the following input data:
 \begin{itemize}
        \item The Whittaker newform $W_{\chi \pi,\psi_0}^\mathrm{new}\in \mathcal{W}(\chi\pi,\psi_0)$,
        \item The average $\widetilde{W}_{\pi^\vee,\psi_0^{-1}}^\mathrm{new}\in \mathcal{W}(\pi^\vee,\psi_0^{-1})$ of the dual Whittaker newform, 
$$\widetilde{W}_{\pi^\vee,\psi_0^{-1}}^\mathrm{new}:=\int_{(U_n(F)\cap J^1)H^1} W_{\pi^\vee,\psi_0^{-1}}^\mathrm{new}( (-)\Sigma_{m,n}^{-1}y\Sigma_{m,n}) \theta_{\Psi_m}(y)\ dy,$$
        \item The Schwartz function $\Phi_{m,n}:=\mathbf{1}_{\mathcal{O}_F^n\cdot \Sigma_{m,n}}\in \mathcal{S}(F^n)$.
    \end{itemize}
    Then, up to appropriate normalization of the quotient measure $dg$ appearing in \eqref{eq: zeta integral intro}, we have 
    $$L(\chi\pi,\pi^\vee,s)=Z(W_{\chi \pi,\psi_0}^\mathrm{new},\widetilde{W}_{\pi^\vee,\psi_0^{-1}}^\mathrm{new}, \Phi_{m,n},s).$$
\end{theorem}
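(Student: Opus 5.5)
The plan is to reduce the positive-depth test vector statement to the Kurinczuk–Matringe theorem for Pa\v{s}k\={u}nas–Stevens Whittaker functions, using the reverse averaging formula (\Cref{thm: PS in terms of new minimax}) applied to $\pi^\vee$, together with a change of variables that moves everything from $\psi_0$-Whittaker models to $\Psi_m$-Whittaker models.

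\textbf{Step 1: relate the averaged newform to the Pa\v{s}k\={u}nas–Stevens function.} I would first observe that $\pi^\vee$ is again a minimax integral-depth cuspidal representation, built from the dual data (the contragredient of $\lambda$, with $\beta$ replaced by $-\beta$), so that its Pa\v{s}k\={u}nas–Stevens function lies in $\mathcal{W}(\pi^\vee,\Psi_m^{-1})$. Applying \Cref{thm: PS in terms of new minimax} to $\pi^\vee$ with $\psi_0^{-1}$ in place of $\psi_0$, and substituting $x\mapsto y$, should give
$$\widetilde{W}^{\mathrm{new}}_{\pi^\vee,\psi_0^{-1}}(g\,\Sigma_{m,n}^{-1}) = W^{\mathrm{PS}}_{\pi^\vee,\Psi_m^{-1}}(\Sigma_{m,n}\,g\,\Sigma_{m,n}^{-1}),$$
up to a positive constant. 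Equivalently, right-translating $\widetilde{W}^{\mathrm{new}}$ by $\Sigma_{m,n}$ and conjugating recovers the Pa\v{s}k\={u}nas–Stevens function. Here I must check that $\theta_{\Psi_m}$ is the character appearing in that formula for $\pi^\vee$; this likely uses $\theta_{\Psi_m}(y^{-1})=\theta_{\Psi_m}(y)^{-1}$ and the duality of simple characters.

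\textbf{Step 2: change variables in the zeta integral.} I would substitute $g\mapsto \Sigma_{m,n}^{-1} g\Sigma_{m,n}$ in $Z$. Conjugation by $\Sigma_{m,n}$ carries the character $\psi_0$ of $U_n(F)$ to $\Psi_m$; this is exactly how $\Sigma_{m,n}=\mathbf{t}_m\Sigma_n$ was designed. It also turns $\Phi_{m,n}((0,\dots,0,1)g)$ into $\mathbf{1}_{\mathcal{O}_F^n}((0,\dots,0,1)g')$, since the last row of $\Sigma_{m,n}$ is $(0,\dots,0,1)$. The determinant factor is unchanged, and the measure changes only by a modulus constant. The newform $W^{\mathrm{new}}_{\chi\pi,\psi_0}$ becomes $g'\mapsto W^{\mathrm{new}}_{\chi\pi,\psi_0}(\Sigma_{m,n}^{-1}g'\Sigma_{m,n})$, which lies in $\mathcal{W}(\chi\pi,\Psi_m)$. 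The remaining factor becomes the Pa\v{s}k\={u}nas–Stevens function of $\pi^\vee$ by Step 1.

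\textbf{Step 3: replace the newform of $\chi\pi$ by its average.} The integrand now involves one Pa\v{s}k\={u}nas–Stevens function, which transforms on the right under $(U_n(F)\cap J^1)H^1$ by $\theta_{\Psi_m}^{-1}$ (up to the appropriate convention), and the factor $\mathbf{1}_{\mathcal{O}_F^n}$ is invariant under this group. So I can average the other Whittaker function over $(U_n(F)\cap J^1)H^1$ against $\theta_{\Psi_m}(x^{-1})$ without changing the integral. By \Cref{thm: PS in terms of new minimax} applied to $\chi\pi$, this average is exactly $W^{\mathrm{PS}}_{\chi\pi,\Psi_m}$. This works because $\chi$ is unramified: $\chi\pi$ shares the type data, the twist only modifies $\lambda$ on $E^\times$, and $\chi\circ\det$ is trivial on the compact group. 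The integral is therefore $c\cdot Z(W^{\mathrm{PS}}_{\chi\pi,\Psi_m},W^{\mathrm{PS}}_{\pi^\vee,\Psi_m^{-1}},\mathbf{1}_{\mathcal{O}_F^n},s)$.

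\textbf{Step 4: conclude.} The Kurinczuk–Matringe theorem \cite{TestvectorsRobNadir} identifies this last zeta integral with $L(\chi\pi,\pi^\vee,s)$, up to a constant, since its Schwartz function is the same $\mathbf{1}_{\mathcal{O}_F^n}$ (possibly after a harmless conjugation matching their normalization of the character). All constants are absorbed into the normalization of $dg$.

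The main obstacle is bookkeeping in Steps 1 and 3. I must verify that the characters $\theta_{\Psi_m}$ and $\Psi_m^{\pm1}$ and the group $(U_n(F)\cap J^1)H^1$ match precisely between $\pi$, $\pi^\vee$ and $\chi\pi$. I must also verify that the constants are nonzero, in particular that the Pa\v{s}k\={u}nas–Stevens normalizations are compatible with those in \cite{TestvectorsRobNadir}. I would check this first via the support of $W^{\mathrm{PS}}$ at the identity.
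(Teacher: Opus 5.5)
This is essentially the paper's proof read in reverse. The paper starts from the Kurinczuk--Matringe identity and substitutes both Pa\v{s}k\={u}nas--Stevens functions by their newform averages; it then collapses the average on the $\pi$ side via $g\mapsto gx$ and $y\mapsto x^{-1}y$, using that $(U_n(F)\cap J^1)H^1\subseteq \GL_n(\mathcal{O}_F)$ fixes $\mathbf{1}_{\mathcal{O}_F^n}$, and finally conjugates by $\Sigma_{m,n}$. You instead conjugate first and recreate that average from the right $\theta_{\Psi_m}^{-1}$-equivariance of $W^{\mathrm{PS}}_{\pi^\vee,\Psi_m^{-1}}$, which is the same computation and relies on the same duality compatibility the paper invokes.

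One bookkeeping fix is needed in your Step 1. The identity should read $\widetilde{W}^{\mathrm{new}}_{\pi^\vee,\psi_0^{-1}}(g)=W^{\mathrm{PS}}_{\pi^\vee,\Psi_m^{-1}}(\Sigma_{m,n}g\Sigma_{m,n}^{-1})$, without the extra $\Sigma_{m,n}^{-1}$ on the left. As you wrote it, your Step 2 substitution would produce $W^{\mathrm{PS}}_{\pi^\vee,\Psi_m^{-1}}(g'\Sigma_{m,n})$ rather than $W^{\mathrm{PS}}_{\pi^\vee,\Psi_m^{-1}}(g')$. The corrected form is exactly what Step 2 actually uses.
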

\subsection{Acknowledgments}
I would like to thank Robert Kurinczuk for various helpful discussions and for encouraging me to pursue this project. I would also like to thank Alberto Minguez for useful comments regarding this paper.
\subsection{Declaration on the use of AI}
During the preparation of this work, the author used AI solely to check and correct spelling, formatting, and grammatical errors. The author takes full responsibility for the contents of this paper.
{
  \hypersetup{linkcolor=black}
  \setcounter{tocdepth}{1}
  \tableofcontents
}
\section{Background}
Let $F$ be a finite extension of $\mathbf{Q}_p$ with ring of integers $\mathcal{O}_F$ and a choice of uniformizer $\varpi_F$. Let $\mathfrak{p}_F:=\varpi_F\mathcal{O}_F$ be the maximal ideal of $\mathcal{O}_F$ and $k_F:=\mathcal{O}_F/\mathfrak{p}_F$ the residue field. By a representation of a locally profinite group, we mean a smooth representation on a complex vector space.
\subsection{Groups}
For an integer $n\in \mathbf{Z}_{\geq 1}$, we define the groups:
\[
\begin{aligned}
G_n   &:= \GL_n(F),
&\qquad Z_n &:= Z(G_n)\simeq F^\times, \\[6pt]
K_n &:= \GL_n(\mathcal{O}_F),
&\qquad K_n^1 &:= \ker\bigl(
  K_n \overset{\mathrm{mod}\ \varpi_F}{\longrightarrow}
  \GL_n(k_F)\bigr).
\end{aligned}
\]
We write $B_n$ for the upper triangular Borel subgroup of $\GL_n$ and $U_n$ for its unipotent radical. We denote by $B_n^\mathrm{op}$ the oppositive Borel subgroup of lower triangular matrices. For $n\in \mathbf{Z}_{\geq 2}$, whenever we regard $G_{n-1}$ as a subgroup of $G_n$ we do so via the block diagonal embedding $G_{n-1}\hookrightarrow G_n,\ g\mapsto \mathrm{diag}(g,1)$. For an additive group $\Gamma$ and integers $a,b\in \mathbf{Z}_{\geq 1}$, we write $M_{a,b}(\Gamma)$ for the additive group of $a\times b$ matrices with coefficients in $\Gamma$.
\subsection{Conjugation matrices}\label{sec: conj matrices} For integers $n\geq 2,m\geq 1$, we define the diagonal matrices 
\begin{align*}\Sigma_n&:=\mathrm{diag}(\varpi_F^{n-1},\varpi_F^{n-2},\dots, \varpi_F, 1)\in G_n,\\
\mathbf{t}_{m}&:=\mathrm{diag}(\varpi_F^{(n-1)m},\varpi_F^{(n-2)m},\dots, \varpi_F^m,1)\in G_n,\\
\Sigma_{m,n}&:=\mathbf{t}_{m}\Sigma_n\in G_n.
\end{align*}
\subsection{Additive characters}\label{sec: add. chars}
Let $\psi_0:F\rightarrow \mathbf{C}^\times$ be a non-trivial additive character of conductor $\mathcal{O}_F$, i.e. $\psi_0$ is trivial on $\mathcal{O}_F$ but non-trivial on $\mathfrak{p}_F^{-1}$. Let $\psi_1:F\rightarrow \mathbf{C}^\times$ be the associated non-trivial additive character given by the assignment $x\mapsto \psi_0(\varpi_F^{-1} x)$. Then, $\psi_1$ has conductor $\mathfrak{p}_F$, i.e. it is trivial on $\mathfrak{p}_F$ but non-trivial on $\mathcal{O}_F$. We extend the non-trivial characters $\psi_0$ and $\psi_1$ to non-degenerate characters of $U_n(F)$ by
\begin{align*}
    \psi_0&:U_n(F)\rightarrow \mathbf{C}^\times,\ u\mapsto \psi_0(u_{1,2}+\dots u_{n-1,n}),\\
    \psi_1&:U_n(F)\rightarrow \mathbf{C}^\times,\ u\mapsto \psi_1(u_{1,2}+\dots u_{n-1,n}).
\end{align*}
Then, as characters of $U_n(F)$, we have 
\begin{align}\label{eq: character relations}
\psi_0=\psi_1^{\Sigma_n}:=\psi_1(\Sigma_n(-)\Sigma_n^{-1}),\ \ \ \psi_1=\psi_0^{\Sigma_n^{-1}}:=\psi_0(\Sigma_n^{-1}(-)\Sigma_n).
\end{align}
We also define the non-degenerate character 
\begin{align}
\Psi_m:=\psi_1^{\mathbf{t}_m^{-1}}:=\psi_1(\mathbf{t}_{m}^{-1}(-)\mathbf{t}_{m}): U_n(F)\rightarrow \mathbf{C}^\times.
\end{align}
The character $\Psi_m$ will not appear until \Cref{sec: pos. depth}.
\subsection{Bessel functions for $\GL_n(k_F)$}\label{sec modular Bessel functions}
The character $\psi_1$ induces a non-trivial additive character $\overline{\psi_1}: k_F\rightarrow \mathbf{C}^\times.$ As before, we extend this to a non-degenerate character of $U_n(k_F)$ by 
$$\overline{\psi_1}:U_n(k_F)\rightarrow \mathbf{C}^\times,\ \  u\mapsto \overline{\psi_1}(u_{1,2}+\dots + u_{n-1,n}).$$
Let $\tau$ be an irreducible cuspidal representation of $\GL_n(k_F)$. Such a representation is always $\overline{\psi_1}$-generic and thus admits a non-zero Whittaker model $\mathcal{W}(\tau,\overline{\psi_1})$ isomorphic to $\tau$. 
\begin{defn}\label{defn Bessel function} The Bessel function $B_{\tau,\overline{\psi_1}} \in \mathcal{W}(\tau,\overline{\psi_1})$ is the unique function in $\mathcal{W}(\tau,\overline{\psi_1})$ which satisfies:
\begin{enumerate}
    \item $B_{\tau,\overline{\psi_1}}(1)=1$,
    \item $B_{\tau,\overline{\psi_1}}(u_1gu_2)=\overline{\psi_1}(u_1)B_{\tau,\overline{\psi_1}}(g) \overline{\psi_1}(u_2)$ for all $u_i\in U_n(k_F)$ and $g\in \GL_n(k_F).$
\end{enumerate}
\end{defn}

\subsection{Newform theory}
Let $n\in \mathbf{Z}_{\geq 2}$ and $m\in \mathbf{Z}_{\geq 0}$. We define the following open compact subgroup of $K_n$
$$K_n(m):=\left\{\left[\begin{matrix}
    A & B \\
    C & D
\end{matrix}\right]\in K_n\ |\ C\in M_{1,(n-1)}(\mathfrak{p}_F^m),\ D\in 1+ \mathfrak{p}^m\right\}.$$
Thus, $K_n(0)=K_n$.

\begin{defn}
    Let $\pi$ be an irreducible generic representation of $G_n$.
    \begin{enumerate}
        \item If there exists an integer $m\in\mathbf{Z}_{\geq 0}$ such that $\pi^{K_n(m)}\neq 0$, we define the \textit{conductor} $c(\pi)\in \mathbf{Z}_{\geq 0}$ to be the minimal such $m$.
        \item We call a non-zero vector in $\pi^{K_n(c(\pi))}$ a \textit{newform}.
    \end{enumerate}
\end{defn}
In their seminal work, Jacquet-Shalika-Piatetski-Shapiro established uniqueness and existence of newforms, generalizing Casselman's work from $n=2$ to arbitrary $n\geq2.$ 
\begin{thm}[\cite{Jacquet1981}, \cite{correction}]\label{thm: existence uniequness of newforms over C}
    Let $\pi$ be an irreducible generic representation of $G_n$.
    \begin{enumerate}
        \item The conductor $c(\pi)$ exists.
        \item $\mathrm{dim}_\mathbf{C}\ \pi^{K_n(c(\pi))}=1$.
    \end{enumerate}
\end{thm}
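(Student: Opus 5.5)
The plan follows Jacquet--Piatetski-Shapiro--Shalika: existence via the local functional equation for $\GL(n)\times\GL(n-1)$, and uniqueness via a minimality argument with zeta integrals against unramified vectors of $G_{n-1}$. Fix $\psi=\psi_0$ and work in the Whittaker model $\mathcal{W}(\pi,\psi_0)$.

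\emph{Existence.} Let $\epsilon(\pi\times\sigma,s,\psi_0)$ be the $\GL(n)\times\GL(n-1)$ epsilon factor with $\sigma$ unramified generic, and write $\epsilon(\pi,s,\psi_0)=\epsilon_0 q^{-c s}$ for the standard epsilon factor. We will show $c=c(\pi)$. First pick a smooth vector $W$ whose restriction to $P_n$ (the mirabolic) is the characteristic function of $U_{n-1}(F)K_{n-1}$ against $\psi_0$; such a $W$ exists by Kirillov theory, since $\mathrm{c}\text{-}\mathrm{Ind}_{U_n}^{P_n}\psi_0\subset \mathcal{W}(\pi,\psi_0)|_{P_n}$. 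Next average $W$ over the subgroup $\{\left[\begin{smallmatrix}1_{n-1}&0\\ C&1\end{smallmatrix}\right]\}\cap K_n(m)$ for $m$ large, and use the functional equation to show that the averaged vector is right-invariant under $K_n(c)$. Concretely, one checks that $Z(\pi(k)W,W',s)=Z(W,W',s)$ for all $k\in K_n(c)$ and all unramified $W'$ of $\GL_{n-1}$, using the dual integral $\widetilde Z$ together with the $q^{-cs}$ factor. Invariance of the restriction to $G_{n-1}$ on enough test functions then gives invariance, because $W\mapsto W|_{P_n}$ is injective.

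\emph{Uniqueness and minimality.} For $W\in\pi^{K_n(m)}$, the function $g\mapsto W(\mathrm{diag}(g,1))$ on $G_{n-1}$ is right $K_{n-1}$-invariant. Its zeta integrals against all unramified $W'$ are therefore determined by its values on the dominant torus $\mathrm{diag}(\varpi^{\lambda},1)$, and they lie in $L(\pi\times\sigma,s)\,\mathbf{C}[q^{\pm s}]$. Applying the functional equation to $\pi(\mathrm{diag}(\varpi^{-m},1,\dots))$-translates, together with the Cauchy--Shintani formula, we bound the degree of the resulting Laurent polynomial. For $m<c$ this forces the polynomial to vanish, hence $W|_{P_n}=0$ and so $W=0$. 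For $m=c$ it forces $W|_{G_{n-1}}$ to be a multiple of a single function determined by $L(\pi\times\sigma,s)$, which gives $\dim\le 1$.

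The main obstacle is the existence step, namely controlling the lower-left unipotent invariance through the functional equation. I would handle it by reducing to the identity $\widetilde Z(\rho(w)\pi(k)\widetilde W,\ldots)$ and computing the action of $K_n(c)\cap\overline{N}$ via the Whittaker character shift. Since this theorem is the cited result of \cite{Jacquet1981}, \cite{correction}, in the paper I would simply invoke it rather than redo the argument.
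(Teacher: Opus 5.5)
Invoking the theorem, as your last sentence proposes, is exactly what the paper does: it gives no argument and only cites \cite{Jacquet1981}, \cite{correction}.

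\paragraph{What is fine.} Your uniqueness/minimality half is the standard JPSS argument and is fine in outline. The translate to use is $\pi^\vee(\mathrm{diag}(1_{n-1},\varpi_F^{m}))\widetilde W$, with $\widetilde W(g)=W(w_n{}^tg^{-1})$. It carries $\mathcal{W}(\pi,\psi_0)^{K_n(m)}$ into $\mathcal{W}(\pi^\vee,\psi_0^{-1})^{K_n(m)}$, so both sides of the functional equation are $L$-factors times polynomials in $q^{-s}$, resp.\ $q^{s-1}$. At $m=c$ the surviving constant is the $q^{0}$-coefficient, namely $W(1)$.

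Part $(1)$ is also fine, and in fact trivial. The vector with Kirillov function $\psi_0\mathbf{1}_{U_{n-1}(F)K_{n-1}}$ is fixed by $P_n(\mathcal{O}_F)$, hence by $P_n(\mathcal{O}_F)\bigl(1+M_{n,n}(\mathfrak{p}_F^N)\bigr)=K_n(N)$ for $N\gg0$.

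\paragraph{The gap.} What your sketch does not establish is $\pi^{K_n(c)}\neq 0$, and part $(2)$ needs it. The degree argument gives no bound on $\dim\pi^{K_n(m)}$ for $m>c$. So without a nonzero vector at level exactly $c$ you can neither identify $c(\pi)$ with $c$ nor get dimension one at the minimal level. This is the delicate half of the theorem; the original existence proof in \cite{Jacquet1981} needed \cite{correction}. Your existence sketch fails in three concrete ways.

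(i) Your starting vector is wrong in general, and the identity you say ``one checks'' is false for it. Its zeta integral against a spherical $W^\circ_\sigma$ is a nonzero constant. Averaging the claimed identity $Z(\pi(k)W,W^\circ_\sigma,s)=Z(W,W^\circ_\sigma,s)$ over $k\in K_n(c)$ would equate that constant with $Z(W_0,W^\circ_\sigma,s)$ for some $W_0\in\pi^{K_n(c)}$. By your own uniqueness step this equals $W_0(1)L(s,\pi\times\sigma)$. That is impossible whenever $L(s,\pi)\not\equiv 1$, for example for unramified $\pi$, where the newform is spherical and restricts to the Shintani function. The correct target restriction, whose zeta integrals are $L(s,\pi\times\sigma)$, is not compactly supported modulo $U_{n-1}(F)$. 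So the inclusion $\mathrm{c}\text{-}\mathrm{Ind}_{U_n}^{P_n}\psi_0\subset\mathcal{W}(\pi,\psi_0)|_{P_n}$ does not produce a $W$ with that restriction. Your choice is right only when $L(s,\pi)\equiv1$, e.g.\ for cuspidal $\pi$.

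(ii) Averaging over $\overline{N}\cap K_n(m)$ for $m\gg0$ does nothing, since $W$ is already fixed there. Averaging over $K_n(c)$ instead gives invariance for free, but moves the whole problem to showing the average is nonzero, which you do not address.

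(iii) Even a correct identity against unramified $W'$ would not yield invariance. Since $W^\circ_\sigma$ is right $K_{n-1}$-invariant, $Z(V,W^\circ_\sigma,s)$ only sees the $K_{n-1}$-average of $V|_{G_{n-1}}$. For $k\notin P_n$ (e.g.\ $k\in\overline{N}(\mathfrak{p}_F^c)$), nothing tells you that $(\pi(k)W)|_{G_{n-1}}$ is $K_{n-1}$-invariant; that is essentially what you are trying to prove. Testing against all $W'\in\mathcal{W}(\sigma,\psi_0^{-1})$ for all $\sigma$ would see the full restriction. But then you need $\gamma(s,\pi\times\sigma,\psi_0)$ for ramified $\sigma$, which the $q^{-cs}$ bookkeeping does not control.
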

Moreover, by a result of Bushnell \cite{Bushnel}, given a cuspidal representation $\pi$ of $G_n$, we have the formula 
\begin{align}\label{eq: Bushnel conductor formula}c(\pi)=n(1+d(\pi)),\end{align}
where $d(\pi)$ denotes the \textit{depth} of $\pi$. In particular, if $d(\pi)=0$ then $c(\pi)=n.$ 

Since $\pi$ is assumed generic, it has a $\psi_0$-Whittaker model $\mathcal{W}(\pi,\psi_0)\simeq\pi$. By the above theorem, the space $\mathcal{W}(\pi,\psi_0)^{K_n(c(\pi))}\simeq \mathbf{C}$. We write $W_{\pi,\psi_0}^\mathrm{new}$ for the unique, up to scalars, non-zero element of $\mathcal{W}(\pi,\psi_0)^{K_n(c(\pi))}$ and we refer to it as the \textit{Whittaker newform} of the representation $\pi.$ It is completely determined in the Kirillov model of $\pi$ by the Matringe-Miyauchi formula \cite{Matringeessential}, \cite{Miauchi}, which in particular implies that $W_{\pi,\psi_0}^\mathrm{new}(1)\neq 0.$
\section{Integral representations in depth zero}\label{sec: depth zero}
\subsection{Bessel vectors for $\GL_n(F)$ in depth zero}\label{sec Bessel vector} Let $\pi$ be a depth-zero irreducible cuspidal representation of $G_n$ with central character $\omega_\pi$. We choose a Moy-Prasad-Morris-Vign\'eras model \cite[III, $3.3$]{vigneras1996representations} 
$$\pi\simeq \mathrm{c}\text{-}\mathrm{Ind}_{Z_n K_n}^{G_n}\widetilde{\tau}$$
where $\widetilde{\tau}:=\omega_\pi\tau$ with $\tau\simeq \mathcal{W}(\tau,\overline{\psi_1})$ an irreducible cuspidal representation of $\GL_n(k_F)$ inflated to a representation of $K_n$. 
\begin{defn}
    The Bessel vector $\mathcal{B}_{\widetilde{\tau},\psi_1}\in \pi\simeq\mathrm{c}\text{-}\mathrm{Ind}_{Z_n K_n}^{G_n}\widetilde{\tau}$ is defined by $\mathrm{supp}(\mathcal{B}_{\widetilde{\tau},\psi_1})\subseteq Z_nK_n$ and 
    $$\mathcal{B}_{\widetilde{\tau},\psi_1}(zk)=\omega_\pi(z)k\cdot B_{\tau,\overline{\psi_1}},\ \mathrm{for\ all}\ z\in Z_n, k\in K_n,$$
    where $B_{\tau,\overline{\psi_1}}\in \mathcal{W}(\tau,\overline{\psi_1})$ is the Bessel function of \emph{\Cref{defn Bessel function}}.
\end{defn}
Since the additive character $\psi_1$ has conductor $\mathfrak{p}_F$, and since $U_n(\mathcal{O}_F)$ normalizes $K_n^1$, we get a well-defined character 
\begin{align}\label{eq: char Psi_1}\psi_1:U_n(\mathcal{O}_F)K_n^1\rightarrow \mathbf{C}^\times,\ \ uk\mapsto \psi_1(u),\ \ \mathrm{for\ all}\ u\in U_n(\mathcal{O}_F), k\in K_n^1.\end{align}
\begin{prop}[\cite{PaskunasStevens2008} Corollary $4.5$]\label{prop Bessel vector} The Bessel vector $\mathcal{B}_{\widetilde{\tau},\psi_1}$ is the unique, up to scalars, vector in $\pi\simeq\mathrm{c}\text{-}\mathrm{Ind}_{Z_n K_n}^{G_n}\widetilde{\tau}$ which satisfies
$$x\cdot \mathcal{B}_{\widetilde{\tau},\psi_1}=\psi_1(x) \mathcal{B}_{\widetilde{\tau},\psi_1},\ \mathrm{for\ all}\ x\in U_n(\mathcal{O}_F)K_n^1.$$
\end{prop}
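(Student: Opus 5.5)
Two things have to be shown: that $\mathcal{B}_{\widetilde{\tau},\psi_1}$ transforms by $\psi_1$ under $U_n(\mathcal{O}_F)K_n^1$, and that it spans the $\psi_1$-isotypic space. For uniqueness I will use Frobenius reciprocity and a Mackey decomposition. The isotypic space is
$$\mathrm{Hom}_{U_n(\mathcal{O}_F)K_n^1}\bigl(\psi_1,\mathrm{c}\text{-}\mathrm{Ind}_{Z_nK_n}^{G_n}\widetilde{\tau}\bigr),$$
which decomposes as a direct sum over double cosets $Z_nK_n g\, U_n(\mathcal{O}_F)K_n^1$. The plan is to show that only the trivial double coset contributes, and that it contributes a one-dimensional space.

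\emph{Equivariance.} A function $f\in\mathrm{c}\text{-}\mathrm{Ind}$ supported on $Z_nK_n$ with $x\cdot f=\psi_1(x)f$ is equivalent to a vector $v=f(1)\in\tau$ with $\tau(x)v=\psi_1(x)v$ for $x\in U_n(\mathcal{O}_F)K_n^1$. Since $\tau$ is inflated, $K_n^1$ acts trivially, and $\psi_1$ is trivial on $K_n^1$, so the condition reduces to $\tau(\bar u)v=\overline{\psi_1}(\bar u)v$ for $\bar u\in U_n(k_F)$. The Bessel function satisfies this by \Cref{defn Bessel function}(2): right translation by $u_2$ multiplies it by $\overline{\psi_1}(u_2)$. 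So $\mathcal{B}_{\widetilde{\tau},\psi_1}$ is $\psi_1$-equivariant.

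\emph{Trivial coset.} The $(U_n(k_F),\overline{\psi_1})$-isotypic space of $\tau$ has dimension $\dim\mathrm{Hom}_{U_n(k_F)}(\overline{\psi_1},\tau)$. By uniqueness of Whittaker models for $\GL_n(k_F)$, this dimension is $1$. Hence the trivial coset contributes exactly the line spanned by $\mathcal{B}_{\widetilde{\tau},\psi_1}$.

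\emph{Nontrivial cosets.} By the Iwasawa and Cartan decompositions, representatives can be taken of the form $g=\varpi^{\lambda}w$, with $\lambda$ dominant up to $Z_n$ and $w\in K_n$ refining modulo $U_n(\mathcal{O}_F)K_n^1$. Equivalently, use the double cosets $Z_nK_n\backslash G_n/K_n$ via $\varpi^\lambda$ and then $K_n\cap\varpi^{-\lambda}K_n\varpi^{\lambda}$ inside. The coset of $g$ contributes
$$\mathrm{Hom}_{K_n\cap g^{-1}U_n(\mathcal{O}_F)K_n^1 g}\bigl(\psi_1,\widetilde{\tau}^g\bigr)\neq 0$$
only if such a nonzero intertwiner exists. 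For non-central $\varpi^\lambda$, the group $K_n\cap \varpi^{\lambda}K_n\varpi^{-\lambda}$ reduces modulo $\varpi$ to a proper parabolic $P(k_F)$. I will show that some unipotent radical $N(k_F)$ of a proper parabolic lies in the image of $K_n\cap gK_n^1g^{-1}$, possibly after conjugating by the Weyl part. On that group $\psi_1$ is trivial, because it comes from $K_n^1$. A nonzero intertwiner would therefore produce nonzero $N(k_F)$-invariants in $\tau$, contradicting cuspidality of $\tau$.

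\emph{Main obstacle.} The hard step is the bookkeeping in the nontrivial cosets: choosing the representatives (the Weyl element relative to $U_n(\mathcal{O}_F)$) so that an entire unipotent radical appears inside $gK_n^1g^{-1}\cap K_n$ modulo $\varpi$. I would first try reducing to $g=\varpi^{\lambda}$ with $\lambda$ antidominant by absorbing $K_n$-elements through Bruhat cells in $\GL_n(k_F)$. Then $\varpi^{\lambda}K_n^1\varpi^{-\lambda}\cap K_n$ contains the entries strictly above (or below) the blocks of $\lambda$ with integral valuation, giving the required $N(k_F)$. Alternatively, I would simply invoke the general result of Pa\v{s}k\={u}nas-Stevens (Corollary 4.5) in this depth-zero setting.
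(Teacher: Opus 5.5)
Your argument is correct, but it takes a different route from the paper. The paper gives no proof of this statement: it simply imports \cite[Corollary $4.5$]{PaskunasStevens2008}, which is proved there for general maximal simple types by type-theoretic methods. Your Mackey-plus-cuspidality argument is self-contained in depth zero and needs only two facts: that $\tau$ is cuspidal, and that $\dim\mathrm{Hom}_{U_n(k_F)}(\overline{\psi_1},\tau)=1$.

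What you flag as the main obstacle actually disappears. Write $\varpi_F^\lambda:=\mathrm{diag}(\varpi_F^{\lambda_1},\dots,\varpi_F^{\lambda_n})$ and $H:=U_n(\mathcal{O}_F)K_n^1$.
\begin{itemize}
\item By Cartan, any $g\notin Z_nK_n$ may be replaced by $\varpi_F^\lambda k$ with $k\in K_n$ and $\varpi_F^\lambda\notin Z_n$.
\item Since $K_n^1$ is normal in $K_n$, we get $gHg^{-1}\supseteq\varpi_F^\lambda K_n^1\varpi_F^{-\lambda}$.
\item The relevant character $y\mapsto\psi_1(g^{-1}yg)$ is trivial on this subgroup, because $g^{-1}yg\in K_n^1$.
\item $K_n\cap\varpi_F^\lambda K_n^1\varpi_F^{-\lambda}$ reduces modulo $\varpi_F$ exactly onto $\{1+N : N_{ij}\neq 0\Rightarrow\lambda_i<\lambda_j\}$. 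This is the unipotent radical of a proper parabolic containing the diagonal torus.
\end{itemize}
So cuspidality of $\tau$ kills every nontrivial double coset, with no choice of Weyl representatives needed.

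Your proposed normalization to antidominant $\lambda$ is not actually available. For example, $\mathrm{diag}(\varpi_F,1)$ and $\mathrm{diag}(1,\varpi_F)$ lie in distinct $Z_2K_2\backslash G_2/U_2(\mathcal{O}_F)K_2^1$ double cosets. Fortunately the normalization is also unnecessary. Your Mackey term also mixes conventions: it should read $\mathrm{Hom}_{K_n\cap gHg^{-1}}\bigl(\psi_1(g^{-1}(\cdot)g),\widetilde{\tau}\bigr)$.

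Finally, this computation is exactly the Mackey argument behind \Cref{lem K_n^1 invariants}, which gives an even shorter proof. Since $\psi_1$ is trivial on $K_n^1$, every $(U_n(\mathcal{O}_F)K_n^1,\psi_1)$-isotypic vector lies in $\pi^{K_n^1}\simeq\tau$. Hence it is supported on $Z_nK_n$, and uniqueness reduces at once to multiplicity one of Whittaker models for $\GL_n(k_F)$.
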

Write $\mathcal{B}_{\widetilde{\tau},\psi_1}^\vee$ for the Bessel vector in $\pi^\vee\simeq \mathrm{c}\text{-}\mathrm{Ind}_{Z_n K_n}^{G_n}(\widetilde{\tau}^\vee)$.
\subsection{Pa\v{s}k\={u}nas-Stevens Whittaker functions in depth zero}\label{sec: PS in depth zero}
Let $\pi$ be a depth-zero irreducible cuspidal representation of $G_n$ identified with $\mathrm{c}\text{-}\mathrm{Ind}_{Z_n K_n}^{G_n}\widetilde{\tau}$. Write $\langle\cdot,\cdot\rangle$ for the evaluation pairing on $\pi \otimes\pi^\vee.$
\begin{thm}\emph{(\cite[Theorem $5.8$]{PaskunasStevens2008})}\label{thm PS main thm}
    Let $$\mathbf{Y}_1:\pi\rightarrow\mathrm{Ind}_{U_n(F)}^{G_n}\psi_1,\ f\mapsto \left(\mathbf{Y}_1(f): g\mapsto \int_{U_n(F)}\psi_1(u)\langle\pi(u^{-1}g)f,\mathcal{B}_{\widetilde{\tau},\psi_1}^\vee\rangle\ du\right)$$
    where $du$ is normalized by $\mathrm{vol}(U_n(\mathcal{O}_F),du)=1$. Then,
    \begin{enumerate}
        \item The map $\mathbf{Y}_1$ is non-zero, $G_n$-equivariant, and its image is the $\psi_1$-Whittaker model $\mathcal{W}(\pi,\psi_1)\simeq\pi$.
        \item If we define $W_{\pi,\psi_1}^\mathrm{PS}:=\mathbf{Y}_1(\mathcal{B}_{\widetilde{\tau},\psi_1})\in \mathcal{W}(\pi,\psi_1)$, then $\mathrm{supp}(W_{\pi,\psi_1}^\mathrm{PS})\subseteq Z_nU_n(F) K_n$, and 
        $$W_{\pi,\psi_1}^\mathrm{PS}(zuk)=\psi_1(u)\cdot \mathcal{B}_{\widetilde{\tau},\psi_1}(zk)(1),\ \mathrm{for\ all}\ z\in Z_n, u\in U_n(F), k\in K_n.$$
    \end{enumerate}
\end{thm}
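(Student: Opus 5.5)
The plan is to prove the two parts together. Part (2) is an explicit evaluation of $\mathbf{Y}_1(\mathcal{B}_{\widetilde{\tau},\psi_1})$, and it also supplies the non-vanishing needed in part (1).

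\emph{Convergence, equivariance and image.} For $f\in\pi$, the matrix coefficient $h\mapsto\langle\pi(h)f,\mathcal{B}^\vee_{\widetilde{\tau},\psi_1}\rangle$ is compactly supported modulo $Z_n$, because $\pi$ is cuspidal (explicitly, both vectors lie in the compactly induced model). Since $U_n(F)\cap Z_nC$ is compact for every compact $C\subseteq G_n$, the integrand $u\mapsto\langle\pi(u^{-1}g)f,\mathcal{B}^\vee_{\widetilde{\tau},\psi_1}\rangle$ has compact support, and the integral is a finite sum on $U_n(F)$ that is locally constant in $g$.
\begin{itemize}
\item \emph{Left $\psi_1$-equivariance.} Substituting $u\mapsto u_0u$ gives $\mathbf{Y}_1(f)(u_0g)=\psi_1(u_0)\mathbf{Y}_1(f)(g)$.
\item \emph{$G_n$-equivariance.} This holds because $\mathbf{Y}_1(\pi(h)f)(g)=\mathbf{Y}_1(f)(gh)$ directly from the formula.
\end{itemize}
Hence $\mathbf{Y}_1$ is an intertwining map $\pi\to\mathrm{Ind}_{U_n(F)}^{G_n}\psi_1$. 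Once it is known to be non-zero, irreducibility of $\pi$ makes it injective, and its image is by definition the $\psi_1$-Whittaker model $\mathcal{W}(\pi,\psi_1)$. Non-zeroness will follow from part (2), which gives $W^{\mathrm{PS}}_{\pi,\psi_1}(1)=B_{\tau,\overline{\psi_1}}(1)=1$.

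\emph{Computing $W^{\mathrm{PS}}_{\pi,\psi_1}$.} Write $\mathcal{B}=\mathcal{B}_{\widetilde{\tau},\psi_1}$ and $\mathcal{B}^\vee=\mathcal{B}^\vee_{\widetilde{\tau},\psi_1}$; both are supported on $Z_nK_n$.
\begin{enumerate}
\item \emph{Support of the matrix coefficient.} The function $h\mapsto\langle\pi(h)\mathcal{B},\mathcal{B}^\vee\rangle$ is supported on $Z_nK_n$. For $h=zk$ with $z\in Z_n$, $k\in K_n$, it equals $\omega_\pi(z)\langle\tau(k)B_{\tau,\overline{\psi_1}},B^\vee\rangle_\tau$, where $B^\vee$ is the Bessel vector of $\tau^\vee$ and $\langle\cdot,\cdot\rangle_\tau$ is the pairing on $\tau\otimes\tau^\vee$, normalized as in \cite{PaskunasStevens2008} so that the induced pairing is the one used.
\item \emph{Vanishing off $Z_nU_n(F)K_n$.} If $g\notin Z_nU_n(F)K_n$, then $u^{-1}g\notin Z_nK_n$ for every $u\in U_n(F)$. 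The integrand vanishes identically, which gives the support claim.
\item \emph{Reduction to a finite integral.} By left $\psi_1$-equivariance and the central character, it suffices to treat $g=k\in K_n$. Then $u^{-1}k\in Z_nK_n$ forces $u\in U_n(F)\cap Z_nK_n=U_n(\mathcal{O}_F)$. Since $\mathrm{vol}(U_n(\mathcal{O}_F))=1$, we get
$$W^{\mathrm{PS}}_{\pi,\psi_1}(k)=\int_{U_n(\mathcal{O}_F)}\psi_1(u)\,\langle\tau(\bar u^{-1}\bar k)B_{\tau,\overline{\psi_1}},B^\vee\rangle_\tau\, du,$$
which is an average over $U_n(k_F)$.
\end{enumerate}

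\emph{The finite-field identity (main obstacle).} The key step is to identify the matrix coefficient $c(h):=\langle\tau(h)B_{\tau,\overline{\psi_1}},B^\vee\rangle_\tau$ on $\GL_n(k_F)$ with the Bessel function $B_{\tau,\overline{\psi_1}}(h)$.
\begin{itemize}
\item The vector $B^\vee$ is the unique-up-to-scalar $\overline{\psi_1}^{-1}$-eigenvector for $U_n(k_F)$ in $\tau^\vee$. Hence $v\mapsto\langle v,B^\vee\rangle_\tau$ is a $\overline{\psi_1}$-Whittaker functional on $\tau$.
\item By uniqueness of Whittaker functionals, $c(h)$ is a constant multiple of the Whittaker function of $B_{\tau,\overline{\psi_1}}$ evaluated at $h$. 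Evaluation at $1$ is itself a Whittaker functional on $\mathcal{W}(\tau,\overline{\psi_1})$, so that Whittaker function is $B_{\tau,\overline{\psi_1}}(h)$.
\item The resulting function is therefore $\overline{\psi_1}$-bi-equivariant and lies in $\mathcal{W}(\tau,\overline{\psi_1})$. By the uniqueness in \Cref{defn Bessel function} and the normalization $\langle B,B^\vee\rangle=1$, it equals $B_{\tau,\overline{\psi_1}}$.
\end{itemize}
The care needed here is in matching the sign conventions ($\psi_1$ versus $\psi_1^{-1}$, $u$ versus $u^{-1}$) and the pairing normalization with those of \cite{PaskunasStevens2008}.

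\emph{Conclusion.} Granting this identity, bi-equivariance gives $\psi_1(u)B_{\tau,\overline{\psi_1}}(\bar u^{-1}\bar k)=B_{\tau,\overline{\psi_1}}(\bar k)$. The average over $U_n(\mathcal{O}_F)$ then returns $B_{\tau,\overline{\psi_1}}(\bar k)=(k\cdot B_{\tau,\overline{\psi_1}})(1)=\mathcal{B}(k)(1)$. Reinserting $z$ and $u$ via the central character and left $\psi_1$-equivariance gives the formula in part (2).
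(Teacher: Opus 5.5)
Your proof is correct. The paper gives no argument of its own here, since the statement is quoted from \cite[Theorem 5.8]{PaskunasStevens2008}. Your computation is essentially the depth-zero case of the Pa\v{s}k\={u}nas--Stevens argument: the matrix coefficient against $\mathcal{B}^\vee_{\widetilde{\tau},\psi_1}$ is supported on $Z_nK_n$, so the unipotent integral collapses to $U_n(\mathcal{O}_F)$, and there uniqueness of Whittaker functionals identifies the finite-field matrix coefficient with $B_{\tau,\overline{\psi_1}}$.

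Your reading of $\mathcal{B}^\vee_{\widetilde{\tau},\psi_1}$ as the $\overline{\psi_1}^{-1}$-eigenvector is the right one. With the opposite sign the matrix coefficient would be $(\overline{\psi_1}^{-1},\overline{\psi_1})$-bi-equivariant, and the average over $U_n(\mathcal{O}_F)$ would kill $\mathbf{Y}_1(\mathcal{B}_{\widetilde{\tau},\psi_1})$ whenever $p\neq 2$.
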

We refer to $W_{\pi,\psi_1}^\mathrm{PS}\in \mathcal{W}(\pi,\psi_1)$ as the \textit{Pa\v{s}k\={u}nas-Stevens Whittaker function} of the cuspidal representation $\pi$.
\begin{rem}
    The above theorem (\Cref{thm PS main thm}) extends to positive depth cuspidal representations of $G_n$ with some modifications. We refer the reader to \cite{PaskunasStevens2008} and \Cref{sec: PS in terms of new pos depth} for more details on the positive depth case. In the depth zero case described above, the Whittaker function $W_{\pi,\psi_1}^\mathrm{PS}$ actually recovers a classical construction of Gelfand.  
\end{rem}
\subsection{Whittaker newforms as averages of Pa\v{s}k\={u}nas-Stevens Whittaker functions}
In \cite{girsch2025newformscuspidalrepresentations} Girsch-Kurinczuk study the Whittaker newform associated to a depth-zero cuspidal representation of $G_n$, and they show that it can be expressed as an average of the Pa\v{s}k\={u}nas-Stevens Whittaker function introduced in the previous section. We restate their result below.
\begin{thm}\emph{(\cite[Theorem $4.1$ \& Proposition $5.5$]{girsch2025newformscuspidalrepresentations})}\label{thm new in terms of PS}
    Let $\pi$ be a depth-zero irreducible cuspidal representation of $G_n$. Then, the Whittaker newform $W_{\pi,\psi_0}^\mathrm{new}\in\mathcal{W}(\pi,\psi_0)$ admits the following integral representation in terms of the Pa\v{s}k\={u}nas-Stevens Whittaker function $W_{\pi,\psi_1}^\mathrm{PS}\in \mathcal{W}(\pi,\psi_1)$:
        $$W_{\pi,\psi_0}^\mathrm{new}(g)=\int_{K_n(n)}W_{\pi,\psi_1}^\mathrm{PS}(\Sigma_ngk \Sigma_n^{-1})\ dk,\ \mathrm{for\ all\ }g\in G_n$$
        for an appropriately normalized Haar measure $dk$ on $K_n(n).$

\end{thm}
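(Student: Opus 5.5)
The statement in question is the Girsch--Kurinczuk formula (Theorem~\ref{thm new in terms of PS}), expressing $W_{\pi,\psi_0}^\mathrm{new}$ as a $K_n(n)$-average of a conjugate of $W_{\pi,\psi_1}^\mathrm{PS}$. The plan is to define the right-hand side
$$W(g):=\int_{K_n(n)}W_{\pi,\psi_1}^\mathrm{PS}(\Sigma_ngk\Sigma_n^{-1})\,dk$$
and show three things: $W$ lies in $\mathcal{W}(\pi,\psi_0)$, it is $K_n(n)$-invariant, and it is nonzero. Uniqueness of newforms (Theorem~\ref{thm: existence uniequness of newforms over C}), together with $c(\pi)=n$ from \eqref{eq: Bushnel conductor formula}, then forces $W$ to be a multiple of $W_{\pi,\psi_0}^\mathrm{new}$. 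Normalizing $dk$ (or the newform) fixes the constant.

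Membership in the Whittaker model is formal. The function $g\mapsto W_{\pi,\psi_1}^\mathrm{PS}(\Sigma_n g\Sigma_n^{-1})$ is $\rho(\Sigma_n^{-1})$ applied to a $\psi_1$-Whittaker function, then precomposed with left multiplication by $\Sigma_n$. For $u\in U_n(F)$,
$$W_{\pi,\psi_1}^\mathrm{PS}(\Sigma_n u g\Sigma_n^{-1})=\psi_1(\Sigma_nu\Sigma_n^{-1})\,W_{\pi,\psi_1}^\mathrm{PS}(\Sigma_n g\Sigma_n^{-1}),$$
and $\psi_1(\Sigma_nu\Sigma_n^{-1})=\psi_0(u)$ by \eqref{eq: character relations}. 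So this function lies in $\mathcal{W}(\pi,\psi_0)$, since $\mathcal{W}(\pi,\psi_0)=\{g\mapsto \widetilde W(ag)\}$ with $a$ a diagonal conjugating matrix as above. Averaging over the compact group $K_n(n)$ under right translation keeps us in the model and produces a $K_n(n)$-invariant vector, because $dk$ is a Haar measure.

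The main obstacle is nonvanishing, and the natural test point is $g=1$:
$$W(1)=\int_{K_n(n)}W_{\pi,\psi_1}^\mathrm{PS}(\Sigma_nk\Sigma_n^{-1})\,dk.$$
I would use the explicit support and values from Theorem~\ref{thm PS main thm}(2): the function $W_{\pi,\psi_1}^\mathrm{PS}$ is supported on $Z_nU_n(F)K_n$, where it equals $\psi_1(u)B_{\tau,\overline{\psi_1}}(\bar k)$ up to central character. The first task is to determine the set of $k\in K_n(n)$ with $\Sigma_nk\Sigma_n^{-1}\in Z_nU_n(F)K_n$. Conjugation by $\Sigma_n$ multiplies the $(i,j)$ entry by $\varpi_F^{j-i}$. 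Hence entries above the diagonal shrink, and entries below the diagonal get multiplied by $\varpi_F^{-(i-j)}$. I would show via an Iwasawa/Cartan argument that $\Sigma_nk\Sigma_n^{-1}\in Z_nU_n(F)K_n$ forces these lower entries of $k$ to satisfy $k_{ij}\in\mathfrak{p}_F^{i-j}$. Thus $\Sigma_nk\Sigma_n^{-1}$ itself lies in $K_n$, with reduction modulo $\varpi_F$ lower unipotent-times-diagonal type, more precisely in $B_n^\mathrm{op}(k_F)$. The bottom row condition $k_{nj}\in\mathfrak{p}_F^n$ is compatible with this.

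On that set the integrand is $B_{\tau,\overline{\psi_1}}$ evaluated on an element of $T(k_F)$ times a lower unipotent. Cuspidality of $\tau$ makes the Bessel function vanish on proper parabolic-type elements unless the element is essentially trivial. Concretely, $B_{\tau,\overline{\psi_1}}(t\bar u^-)$ summed over the residual lower unipotent directions should vanish except for the contributions collapsing onto $\{t: \overline{\psi_1}^t=\overline{\psi_1}\}$, i.e.\ scalar $t$. The last diagonal entry of $k$ is $\equiv 1$ modulo $\mathfrak{p}_F^n$, so only $t=1$ survives. This yields $W(1)$ equal to a positive volume times $B_{\tau,\overline{\psi_1}}(1)=1$, hence nonzero.

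If this direct Bessel-vanishing computation is messy, my fallback is to compute instead the $\psi_0$-Whittaker-type pairing of $W$ against the newform via the Kirillov model. Restricted to the mirabolic, the function $W$ is determined by its values on $\mathrm{diag}(a,1)$, $a\in G_{n-1}$. There the support computation reduces to $a\in Z\,U K_{n-1}$ with a single nonzero coset contributing, giving again a nonzero value.
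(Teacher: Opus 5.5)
Your strategy is sound, but it is not the route behind the cited result. The paper does not reprove this theorem. It imports it from \cite{girsch2025newformscuspidalrepresentations}, where the newform $f^{\mathrm{new}}$ is first determined inside $\mathrm{c}\text{-}\mathrm{Ind}_{Z_nK_n}^{G_n}\widetilde{\tau}$. In particular $f^{\mathrm{new}}(\Sigma_n)=\sum_{\overline{b}\in B_{n-1}^{\mathrm{op}}(k_F)}\overline{b}\cdot B_{\tau,\overline{\psi_1}}$, which the paper recalls later. This vector is then pushed into the Whittaker model by the Pa\v{s}k\={u}nas--Stevens functional, $W_{\pi,\psi_0}^{\mathrm{new}}(g)=\mathbf{Y}_1(\Sigma_ng\cdot f^{\mathrm{new}})$.

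You never leave the Whittaker model, and you follow the template the paper itself uses for its reverse formula. The average is a $K_n(n)$-fixed vector of $\mathcal{W}(\pi,\psi_0)$, which is a line because $c(\pi)=n$, so only $W(1)\neq 0$ has to be checked. This is shorter and needs only \Cref{thm PS main thm}(2), Bushnell's conductor formula and uniqueness of newforms. The induced-model route gives more: the newform as an explicit vector of the type-theoretic model, which the paper reuses in its own proofs. The two routes meet at the same finite-field quantity. Pushing $dk$ forward along $k\mapsto\overline{\Sigma_nk\Sigma_n^{-1}}$, your $W(1)$ is a positive multiple of $\sum_{\overline{b}\in B_{n-1}^{\mathrm{op}}(k_F)}B_{\tau,\overline{\psi_1}}(\mathrm{diag}(\overline{b},1))$.

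Your support claim is correct, and the Iwasawa step is short. Since $\det(\Sigma_nk\Sigma_n^{-1})\in\mathcal{O}_F^\times$, if $g=\Sigma_nk\Sigma_n^{-1}\in Z_nU_n(F)K_n$ then every $r\times r$ minor of the bottom $r$ rows of $g$ is integral. Induct upward from the last row, which is integral because $k_{nj}\in\mathfrak{p}_F^n$. Once rows $i+1,\dots,n$ are integral, the block $M=(g_{ab})_{a,b>i}$ lies in $\GL_{n-i}(\mathcal{O}_F)$, since its reduction is lower triangular with unit diagonal. For $b<i$, the minor on rows $i,\dots,n$ and columns $b,i+1,\dots,n$ equals $\det(M)\,(g_{ib}-c)$ with $c\in\mathcal{O}_F$, which forces $g_{ib}\in\mathcal{O}_F$.

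The Bessel step, however, is not justified as you phrase it. There are three problems:
\begin{enumerate}
\item Vanishing on non-trivial elements of proper parabolics fails for $B_n^{\mathrm{op}}(k_F)$. Already for $n=2$ the Bessel function is non-zero at some non-central lower triangular matrices, which lie in the big cell.
\item The condition $\overline{\psi_1}^{t}=\overline{\psi_1}$ only controls pure torus elements, not elements $t\bar u^-$ with $\bar u^-\neq 1$.
\item No cancellation over lower unipotent directions is involved.
\end{enumerate}
What you need is the pointwise statement $B_{\tau,\overline{\psi_1}}(\mathrm{diag}(\overline{b},1))=0$ for $\overline{b}\neq 1$. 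This holds because $\mathrm{diag}(\overline{b},1)$ lies in the mirabolic subgroup $P_n(k_F)$. Any function on $P_n(k_F)$ that is left and right $(U_n(k_F),\overline{\psi_1})$-equivariant is supported on $U_n(k_F)$. To see this, compare the two equivariances of $\mathrm{diag}(h,1)$ against the unipotent elements whose only non-zero off-diagonal entries are in the last column. This forces the last row of $h$ to be $(0,\dots,0,1)$, and one recurses.

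This argument uses only bi-equivariance, not cuspidality; cuspidality enters your argument solely through the support of $W_{\pi,\psi_1}^{\mathrm{PS}}$. It also gives directly the non-vanishing of $\sum_{\overline{b}}B_{\tau,\overline{\psi_1}}(\overline{b})$, which the paper quotes from \cite[\S 2.2(B1)]{girsch2025newformscuspidalrepresentations}. With it, $W(1)=\mathrm{vol}(K_n(n)\cap\Sigma_n^{-1}K_n^1\Sigma_n)>0$, and your Kirillov-model fallback is unnecessary.
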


\subsection{Pa\v{s}k\={u}nas-Stevens Whittaker functions as averages of Whittaker newforms}\label{PS interms of new}
Our goal in the rest of this section is to establish the reverse averaging formula to the one in \Cref{thm new in terms of PS}. In the theorem below, we give an integral representation of the Pa\v{s}k\={u}nas-Stevens Whittaker function of $\pi$ in terms of the Whittaker newform of $\pi$ and the character $\psi_1:U_n(\mathcal{O}_F)K_n^1\rightarrow \mathbf{C}^\times$ given in \eqref{eq: char Psi_1}. 

\begin{thm}\label{thm PS in terms of new}
    Let $\pi$ be a depth-zero irreducible cuspidal representation of $G_n$. Then, the Pa\v{s}k\={u}nas-Stevens Whittaker function $W_{\pi,\psi_1}^\mathrm{PS}\in \mathcal{W}(\pi,\psi_1)$ admits the following integral representation in terms of the Whittaker newform $W_{\pi,\psi_0}^\mathrm{new}\in\mathcal{W}(\pi,\psi_0)$:
    $$W_{\pi,\psi_1}^\mathrm{PS}(g)=\int_{U_n(\mathcal{O}_F)K_n^1} W_{\pi,\psi_0}^\mathrm{new}(\Sigma_n^{-1}gx \Sigma_n) \psi_1(x^{-1})\ dx,\ \mathrm{for\ all}\ g\in G_n,$$
    for an appropriately normalized Haar measure $dx$ on $U_n(\mathcal{O}_F)K_n^1.$
\end{thm}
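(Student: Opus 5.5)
The plan is to translate the right-hand side into a statement about a projection onto an isotypic component, and then use the uniqueness in \Cref{prop Bessel vector}. Set $W':=\pi(\Sigma_n)W_{\pi,\psi_0}^\mathrm{new}$, i.e. $W'(g)=W_{\pi,\psi_0}^\mathrm{new}(g\Sigma_n)$. By \eqref{eq: character relations}, $g\mapsto W'(\Sigma_n^{-1}g)$ lies in $\mathcal{W}(\pi,\psi_1)$. Call this function $W''$. Then it is just the image of the newform under the intertwiner $\mathcal{W}(\pi,\psi_0)\to\mathcal{W}(\pi,\psi_1)$, $W\mapsto W(\Sigma_n^{-1}(-)\Sigma_n)$.

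With this notation the right-hand side of the theorem reads
$$\int_{U_n(\mathcal{O}_F)K_n^1}\psi_1(x)^{-1}\,\big(\pi(x)W''\big)(g)\,dx=:\big(e_{\psi_1}W''\big)(g).$$
Here $e_{\psi_1}$ is the idempotent projector onto the $\psi_1$-isotypic part for the compact group $H:=U_n(\mathcal{O}_F)K_n^1$, with the character taken from \eqref{eq: char Psi_1}. By \Cref{prop Bessel vector}, transported via $\mathbf{Y}_1$ of \Cref{thm PS main thm}, that isotypic part is the line spanned by $W_{\pi,\psi_1}^\mathrm{PS}$. Hence the right-hand side equals $c\,W_{\pi,\psi_1}^\mathrm{PS}$ for some constant $c$, and it remains only to show $c\neq0$. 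Once that is done, the measure $dx$ is rescaled by $c^{-1}$.

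To see $c\neq 0$, evaluate at $g=1$, where $W_{\pi,\psi_1}^\mathrm{PS}(1)=B_{\tau,\overline{\psi_1}}(1)=1$. The evaluation gives
$$c=\int_H W_{\pi,\psi_0}^\mathrm{new}(\Sigma_n^{-1}x\Sigma_n)\psi_1(x^{-1})\,dx.$$
Writing $x=uk$ with $u\in U_n(\mathcal{O}_F)$ and $k\in K_n^1$, the $u$-part contributes $W^\mathrm{new}(\Sigma_n^{-1}u\Sigma_n\,\Sigma_n^{-1}k\Sigma_n)\psi_1(u)^{-1}$. Since $\psi_0(\Sigma_n^{-1}u\Sigma_n)=\psi_1(u)$, the $u$-dependence cancels, and $c$ reduces to a positive multiple of $\int_{K_n^1}W^\mathrm{new}(\Sigma_n^{-1}k\Sigma_n)\,dk$. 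Next decompose $K_n^1$ in Iwahori form $(K_n^1\cap B^\mathrm{op}_n)(K_n^1\cap B_n)$.

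Conjugating by $\Sigma_n^{-1}$ sends:
\begin{itemize}
\item the upper entry $(i,j)$, $i<j$, which lies in $\mathfrak p$, to $\varpi^{j-i}\mathfrak p$, so those elements land in $U_n(\mathcal{O}_F)$, on which $W^\mathrm{new}$ transforms by $\psi_0$, which is trivial there;
\item the diagonal elements to $1+\mathfrak p$, i.e. into $K_n(n)$ up to the $(n,n)$ entry;
\item a lower entry $(i,j)$, $i>j$, to $\varpi^{1-(i-j)}\mathcal{O}_F$. For $i-j\ge 2$ this falls outside $\mathcal{O}_F$.
\end{itemize}

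So the main obstacle is controlling $W^\mathrm{new}$ on $\Sigma_n^{-1}(K_n^1\cap B_n^\mathrm{op})\Sigma_n$, a non-compact-looking lower-unipotent piece. Here I would work in the opposite direction instead. Pair $e_{\psi_1}W''$ against the Girsch--Kurinczuk formula of \Cref{thm new in terms of PS}: apply the averaging over $K_n(n)$ from that theorem to $e_{\psi_1}W''=cW^\mathrm{PS}$. Because $\Sigma_n K_n(n)\Sigma_n^{-1}$ and $H$ interact compatibly and the newform line is one-dimensional, this returns $c\,W^\mathrm{new}$ up to a nonzero constant. Alternatively, and more cleanly, I would use that $W''=\pi(\Sigma_n^{-1})W^\mathrm{new}$ transported is a nonzero vector and compute $\langle W'',\mathcal{B}^\vee\rangle$ via the $\mathbf{Y}_1$ pairing. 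The coefficient $c$ is exactly the pairing of $\pi(\Sigma_n^{-1})v^\mathrm{new}$ with $\mathcal{B}^\vee_{\widetilde\tau,\psi_1}$. If $c$ were $0$, then \Cref{thm new in terms of PS}, which expresses $W^\mathrm{new}(1)\neq0$ (Matringe--Miyauchi) as a matrix coefficient built from the Bessel vector against $\pi(\Sigma_n^{-1})v^\mathrm{new}$ integrated over $K_n(n)$, would vanish. That contradiction gives $c\neq0$ and completes the proof.
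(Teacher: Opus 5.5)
Your reduction is the same as the paper's. The right-hand side is the $(U_n(\mathcal{O}_F)K_n^1,\psi_1)$-projection of $g\mapsto W^{\mathrm{new}}_{\pi,\psi_0}(\Sigma_n^{-1}g\Sigma_n)$, so by \Cref{prop Bessel vector} it equals $c\,W^{\mathrm{PS}}_{\pi,\psi_1}$, and $c$ is a positive multiple of $\int_{K_n^1}W^{\mathrm{new}}_{\pi,\psi_0}(\Sigma_n^{-1}k\Sigma_n)\,dk$. The non-vanishing of this integral is the real content of the theorem, however, and neither of your closing arguments establishes it.

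Your first suggestion is circular. By \Cref{thm new in terms of PS}, averaging $c\,W^{\mathrm{PS}}_{\pi,\psi_1}$ over $K_n(n)$ just returns $c\,W^{\mathrm{new}}_{\pi,\psi_0}$, which is consistent with $c=0$. The composite operator acts on the newform line by $c$ times the Girsch--Kurinczuk constant, so ``interact compatibly'' would itself have to be the missing computation.

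Your second suggestion misreads \Cref{thm new in terms of PS}. At $g=1$ it says $W^{\mathrm{new}}_{\pi,\psi_0}(1)=\int_{K_n(n)}W^{\mathrm{PS}}_{\pi,\psi_1}(\Sigma_nk\Sigma_n^{-1})\,dk$. This involves only $W^{\mathrm{PS}}_{\pi,\psi_1}$, not a pairing against a translate of $v^{\mathrm{new}}$, so nothing in it is forced to vanish when $c=0$. Matringe--Miyauchi does not help directly either. Up to a scalar, $W^{\mathrm{new}}_{\pi,\psi_0}(1)=\mathbf{Y}_1(\pi(\Sigma_n)v^{\mathrm{new}})(1)$ is an integral over all of $U_n(F)$ of $\psi_1(u)\langle\pi(u^{-1}\Sigma_n)v^{\mathrm{new}},\mathcal{B}^\vee_{\widetilde{\tau},\psi_1}\rangle$, and $c$ only accounts for the contribution of $U_n(\mathcal{O}_F)$. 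Also, the vector underlying $W''$ is $\pi(\Sigma_n)v^{\mathrm{new}}$, not $\pi(\Sigma_n^{-1})v^{\mathrm{new}}$.

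The paper fills this gap by an explicit evaluation.
\begin{enumerate}
\item Writing $W^{\mathrm{new}}_{\pi,\psi_0}(g)=\mathbf{Y}_1(\pi(\Sigma_n g)f^{\mathrm{new}})$, the integral becomes $\mathbf{Y}_1(\mathbf{f})$ with $\mathbf{f}=\int_{K_n^1}\pi(k\Sigma_n)f^{\mathrm{new}}\,dk\in\pi^{K_n^1}$.
\item Cuspidality of $\tau$ forces $K_n^1$-fixed vectors of $\mathrm{c}\text{-}\mathrm{Ind}_{Z_nK_n}^{G_n}\widetilde{\tau}$ to be supported on $Z_nK_n$. So $\mathbf{f}$ is determined by $\mathbf{f}(1)\propto f^{\mathrm{new}}(\Sigma_n)=\sum_{\bar b\in B^{\mathrm{op}}_{n-1}(k_F)}\bar b\cdot B_{\tau,\overline{\psi_1}}$, using the Girsch--Kurinczuk formula.
\item Hence $c\propto\sum_{\bar b}B_{\tau,\overline{\psi_1}}(\bar b)$, which is nonzero by their property (B1).
\end{enumerate}

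Your pairing idea can also be completed, but it needs two ingredients you do not invoke.
\begin{enumerate}
\item Let $\mathcal{B}^\vee$ span the $(U_n(\mathcal{O}_F)K_n^1,\psi_1^{-1})$-isotypic line of $\pi^\vee$. Then $c\neq0$ if and only if $\langle\pi(\Sigma_n)v^{\mathrm{new}},\mathcal{B}^\vee\rangle\neq0$.
\item Since $v^{\mathrm{new}}$ is $K_n(n)$-fixed, this pairing equals $\langle v^{\mathrm{new}},e\,\pi^\vee(\Sigma_n^{-1})\mathcal{B}^\vee\rangle$, where $e$ is the projector onto $K_n(n)$-invariants.
\item \Cref{thm new in terms of PS}, applied to $\pi^\vee$ (with $\psi_0^{-1},\psi_1^{-1}$) rather than to $\pi$, says $e\,\pi^\vee(\Sigma_n^{-1})\mathcal{B}^\vee$ is a nonzero newform of $\pi^\vee$.
\item The pairing between the lines $\pi^{K_n(n)}$ and $(\pi^\vee)^{K_n(n)}$ is perfect, so $c\neq0$.
\end{enumerate}
An argument of one of these two kinds has to replace your final step.
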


The rest of this section is devoted to proving \Cref{thm PS in terms of new}. First, define the function $\mathbf{W}:G_n\rightarrow \mathbf{C}$ by 
$$\mathbf{W}(g):= \int_{U_n(\mathcal{O}_F)K_n^1} W_{\pi,\psi_0}^\mathrm{new}(\Sigma_n^{-1}gx \Sigma_n) \psi_1(x^{-1})\ dx,\ g\in G_n.$$
\begin{lem}\label{lem: W is in Whit model}
    The function $\mathbf{W}$ is an element of $\mathcal{W}(\pi,\psi_1).$
    \begin{proof}
        By the character relations of \eqref{eq: character relations}, given any $W\in \mathcal{W}(\pi,\psi_0)$, the function on $G_n$ defined as $W'(g):=W(\Sigma_n^{-1}g)$, is an element of the $\psi_1$-Whittaker space $\mathrm{Ind}_{U_n(F)}^{G_n} \psi_1.$ Since the assignment 
        $$\pi\simeq\mathcal{W}(\pi,\psi_0)\rightarrow \mathrm{Ind}_{U_n(F)}^{G_n} \psi_1,\ W\mapsto W'$$
        is $G_n$-equivariant, the function $W'$ lands inside the $\psi_1$-Whittaker model $\mathcal{W}(\pi,\psi_1)\simeq\pi.$ Applying this with $W_0:=W_{\pi,\psi_0}^\mathrm{new}$, we see that $(x\Sigma_n)\cdot W_0'\in \mathcal{W}(\pi,\psi_1)$. Since the integrand in the definition of $\mathbf{W}$ is smooth as a function of $x\in U_n(\mathcal{O}_F)K_n^1$, and since $U_n(\mathcal{O}_F)K_n^1$ is compact, the result follows from the preceding observation.
    \end{proof}
\end{lem}
\begin{lem}\label{lem: reduction to W non zero}
    If $\mathbf{W}\neq0$, then \emph{\Cref{thm PS in terms of new}} holds.
    \begin{proof}
        By construction, the Whittaker function $\mathbf{W}$ lies in the $(U_n(\mathcal{O}_F)K_n^1,\psi_1)$-isotypic subspace of $\mathcal{W}(\pi,\psi_1)$, where $\psi_1$ is regarded as a character of $U_n(\mathcal{O}_F)K_n^1$ as in \eqref{eq: char Psi_1}. But by \Cref{prop Bessel vector} and \Cref{thm PS main thm} this isotypic subspace is one-dimensional and generated by $W_{\pi,\psi_1}^\mathrm{PS}$.
    \end{proof}
\end{lem}
\subsubsection{Non-vanishing of $\mathbf{W}(1)$}
By the above lemmas, the proof of \Cref{thm PS in terms of new} can be reduced to showing that $\mathbf{W}(1)\neq 0$. We begin by unfolding the integral defining $\mathbf{W}(1)$. Since $U_n(\mathcal{O}_F)$ normalizes $K_n^1$, an application of \cite[Lemma $2.4$]{kurinczuk2017rankin} allows us to split the integral as 
$$\mathbf{W}(1)=\int_{U_n(\mathcal{O}_F)}\int_{K_n^1} W_{\pi,\psi_0}^\mathrm{new}(\Sigma_n^{-1} uk \Sigma_n)\psi_1(u^{-1})\ du\ dk$$
for appropriately normnalized Haar measures $du$ on $U_n(\mathcal{O}_F)$ and $dk$ on $K_n^1$. Then, notice that $W_{\pi,\psi_0}^\mathrm{new}(\Sigma_n^{-1} uk \Sigma_n)=\psi_0(\Sigma_n^{-1} u \Sigma_n)W_{\pi,\psi_0}^\mathrm{new}(\Sigma_n^{-1} k \Sigma_n)=\psi_1(u) W_{\pi,\psi_0}^\mathrm{new}(\Sigma_n^{-1} k \Sigma_n)$ for any $u\in U_n(\mathcal{O}_F)$ and $k\in K_n^1$. Thus, 
\begin{align}\label{eq: integral simplified}
    \mathbf{W}(1)=\mathrm{vol}(U_n(\mathcal{O}_F),du)\cdot\int_{K_n^1} W_{\pi,\psi_0}^\mathrm{new}(\Sigma_n^{-1}k \Sigma_n)\ dk.
\end{align}
So in order to show that $\mathbf{W}(1)$ is non-zero we need to show that the integral over $K_n^1$ appearing in \eqref{eq: integral simplified} is non-zero. 

For $n=2$, we have $\Sigma_n^{-1}K_n^1 \Sigma_n \subseteq K_n$ and we can directly show via matrix manipulations that $W_{\pi,\psi_0}^\mathrm{new}(\Sigma_n^{-1} k\Sigma_n)$ is constant and non-zero on the whole of $K_n^1$. This is no longer the case for $n>2$ so we need to argue differently. We will first give the direct computation in the case $n=2$, and then proceed to give a different proof in the general case.
\subsubsection{The $\GL_2$ case}
\begin{prop}\label{prop: main GL(2) depth zero}
    Let $\pi$ be a depth-zero irreducible cuspidal representation of $G_2=\GL_2(F).$ Then, 
    $$\int_{K_2^1} W_{\pi,\psi_0}^\mathrm{new}(\Sigma_2^{-1} k \Sigma_2) \ dk \neq 0.$$
    \begin{proof}
        Let $k$ be an arbitrary element of $K_2^1$ which we write as 
        $$k=\left[\begin{matrix}
            1+k_1 & k_2\\
            k_3 & 1+k_4 
        \end{matrix}\right],\ k_i\in \mathfrak{p}_F.$$
        A direct calculation shows that
        \begin{align*}
            &\Sigma_2^{-1} k\Sigma_2=\left[\begin{matrix}
                1+k_1 & \varpi_F k_2^{-1}\\
                \varpi_F k_3 & 1+k_4
            \end{matrix}\right]\\
            &=\underbrace{\left[\begin{matrix}
                1 & \varpi_F^{-1} k_2(1+k_4)^{-1} \\
                & 1
            \end{matrix}\right]}_{A}\underbrace{\left[\begin{matrix}
                1+k_4 & \\
                 & 1+k_4
            \end{matrix}\right]}_{B}\underbrace{\left[\begin{matrix}
                \det(k)(1+k_4)^{-2} & \\
                & 1
            \end{matrix}\right]}_{C}\underbrace{\left[\begin{matrix}
                1 & \\
                \varpi_Fk_3(1+k_4)^{-1}& 1
            \end{matrix}\right]}_{D}.
        \end{align*}
        Thus, $W_{\pi,\psi_0}^\mathrm{new}(\Sigma_2^{-1} k \Sigma_2)=\psi_0(A)\omega_\pi(B) W_{\pi,\psi_0}^\mathrm{new}(CD).$ Now, we notice that $\psi_0(A)=1$ since $k_2\in \mathfrak{p}_F$, and that $\omega_\pi(B)=1$ since $k_4\in\mathfrak{p}_F$ and $\pi$ has depth zero. Finally, it is clear that $C,D\in K_2(2)$ which fixes $W_{\pi,\psi_0}^\mathrm{new}$. Since $W_{\pi,\psi_0}^\mathrm{new}$ is supported at the identity by \Cref{thm new in terms of PS}$(1)$, the result follows.
    \end{proof}
\end{prop}
\begin{rem}
    Notice that the proof of \Cref{prop: main GL(2) depth zero} only uses the depth-zero assumption through the fact that $c(\omega_\pi)\leq 1$, and it does not use cuspidality at all. Thus, if $\pi$ is any irreducible generic representation of $\GL_2(F)$ with $c(\omega_\pi)\leq 1$, then the Whittaker function 
    $$W(g)=\int_{U_2(\mathcal{O}_F)K_2^1} W_{\pi,\psi_0}^\mathrm{new}(\Sigma_2^{-1}gx \Sigma_2) \psi_1(x^{-1})\ dx,\ g\in \GL_2(F)$$
    is always a non-zero element of
    $\mathcal{W}(\pi,\psi_1)^{(U_n(\mathcal{O}_F)K_2^1,\psi_1)}$. 
    
    Unlike the $n=2$ case, for general $n\geq 2$ the depth-zero and cuspidality assumptions on $\pi$ will be essential in our proof.
\end{rem}
\subsubsection{The $\GL_n$ case}
\begin{prop}\label{prop: non-vanishing for general n}
    Let $\pi$ be a depth-zero irreducible cuspidal representation of $G_n$. Then, 
    $$\int_{K_n^1}W_{\pi,\psi_0}^\mathrm{new}(\Sigma_n^{-1} k \Sigma_n)\ dk \neq 0.$$
    \begin{proof}
        Recall the intertwining map $\mathbf{Y}_1\in \mathrm{Hom}_{G_n}(\pi,\mathrm{Ind}_{U_n(F)}^{G_n}\psi_1)$ of \Cref{thm PS main thm}. By Frobenius reciprocity, we will regard $\mathbf{Y}_1$ as an element of $\mathrm{Hom}_{U_n(F)}(\pi,\psi_1).$ Then, the assignment 
        $$\mathbf{Y}_0:f\mapsto \mathbf{Y}_1(\Sigma_n \cdot f),\ f\in \pi\simeq \mathrm{c}\text{-}\mathrm{Ind}_{Z_nK_n}^{G_n}\tilde{\tau},$$
        is an element of $\mathrm{Hom}_{U_n(F)}(\pi,\psi_0).$ By uniqueness of the Whittaker model, uniqueness of the newform, and the fact that the Whittaker newform is supported at the identity, we must have that, up to normalization by a non-zero scalar, 
    $$W_{\pi_,\psi_0}^\mathrm{new}(g)=\mathbf{Y}_0(g\cdot f^\mathrm{new}), \mathrm{for\ all}\ g\in G_n,$$
    where $f^\mathrm{new}$ denotes the unique, up to scalars, newform in the model $\mathrm{c}\text{-}\mathrm{Ind}_{Z_nK_n}^{G_n}\tilde{\tau}$. Therefore, 
    \begin{align*}
        \int_{K_n^1}W_{\pi,\psi_0}^\mathrm{new}(\Sigma_n^{-1} k \Sigma_n)\ dk&=\int_{K_n^1}\mathbf{Y}_0(\Sigma_n^{-1}k \Sigma_n \cdot f^\mathrm{new})\ dk\\
        &=\int_{K_n^1} \mathbf{Y}_1(k \Sigma_n \cdot f^\mathrm{new})\ dk\\
        &=\mathbf{Y}_1(\mathbf{f})
    \end{align*}
    where
    $$\mathbf{f}:=\int_{K_n^1} k\Sigma_n\cdot f^\mathrm{new}\ dk \in \pi\simeq \mathrm{c}\text{-}\mathrm{Ind}_{Z_nK_n}^{G_n}\tilde{\tau}.$$
    By construction, the element $\mathbf{f}$ lies in the space of $K_n^1$-invariants $\pi^{K_n^1}$. We now need the following lemma regarding this space:
    \begin{lem}\label{lem K_n^1 invariants}
        With $\pi\simeq \mathrm{c}\text{-}\mathrm{Ind}_{Z_nK_n}^{G_n}\tilde{\tau}$ as above, there is a $K_n/K_n^1\simeq \GL_n(k_F)$-equivariant isomorphism of representations
        $$\iota:\pi^{K_n^1}\rightarrow \tau,\ f\mapsto f(1)$$
        with inverse given by
        $$\iota^{-1}:\tau{\rightarrow}\pi^{K_n^1},\ x\mapsto \left(f_{[x]}:g\mapsto \begin{dcases}
            \tilde{\tau}(g)x,\ &\mathrm{if}\ g\in Z_nK_n\\
            0,\ &\mathrm{if}\ g\not\in Z_nK_n
        \end{dcases}\right).$$
        \begin{proof}
            The only thing that is not immediately clear is that the support of an element in $\pi^{K_n^1}$ is contained in $Z_nK_n$. This follows by Mackey theory and cuspidality of $\tau.$ See for example the proof of \cite[Theorem $6.2$]{PrasadRaghuram2008}.
        \end{proof}
    \end{lem}
   Using the fact that $\tilde{\tau}(k)$ acts trivially for $k\in K_n^1$, we see that, up to a non-zero scalar, $\mathbf{f}(1)=f^\mathrm{new}(\Sigma_n)=f_{\left[f^\mathrm{new}(\Sigma_n)\right]}(1)$, where now $f_{\left[f^\mathrm{new}(\Sigma_n)\right]}\in \pi^{K_n^1}$ and thus by \Cref{lem K_n^1 invariants} we see that $\mathbf{f}=f_{\left[f^\mathrm{new}(\Sigma_n)\right]}$ as elements of $\pi$. Thus we are reduced to evaluating the quantity $$\mathbf{Y}_1(f_{\left[f^\mathrm{new}(\Sigma_n)\right]}).$$
   To do this, we will use the following explicit expression for $f^\mathrm{new}(\Sigma_n)$ obtained in \cite{girsch2025newformscuspidalrepresentations}:
   \begin{thm}\emph{(\cite[Theorem $4.1(3)$]{girsch2025newformscuspidalrepresentations})}\label{thm: f^new interms of Bessel}
       With $\pi\simeq \mathrm{c}\text{-}\mathrm{Ind}_{Z_nK_n}^{G_n}\tilde{\tau}$ as above, 
       $$f^\mathrm{new}(\Sigma_n)=\sum_{\overline{b}\in B_{n-1}^\mathrm{op}(k_F)}\overline{b}\cdot B_{\tau,\overline{\psi_1}}$$
       where $B_{\tau,\overline{\psi_1}}$ is the Bessel function from \emph{\Cref{sec modular Bessel functions}}.
   \end{thm}
   Combining \Cref{thm: f^new interms of Bessel} with the fact that $f_{[B_{\tau,\overline{\psi_1}}]}$ is nothing more than the Bessel vector $\mathcal{B}_{\tilde{\tau},\psi_1}$, we have that up to a non-zero scalar,
   \begin{align*}
       \mathbf{Y}_1(f_{\left[f^\mathrm{new}(\Sigma_n)\right]})&=\sum_{\overline{b}\in B_{n-1}^\mathrm{op}(k_F)}\mathbf{Y}_1(b\cdot \mathcal{B}_{\tilde{\tau},\psi_1})\\
       &=\sum_{\overline{b}\in B_{n-1}^\mathrm{op}(k_F)} B_{\tau,\psi_1}(\overline{b})
   \end{align*}
   where the second equality follows from \Cref{thm PS main thm}. Finally, we observe that the sum $\sum_{\overline{b}\in B_{n-1}^\mathrm{op}(k_F)} B_{\tau,\psi_1}(\overline{b})$ is non-zero by \cite[\S $2.2$(B$1$)]{girsch2025newformscuspidalrepresentations}. This concludes the proof of \Cref{prop: non-vanishing for general n}.
    \end{proof}
\end{prop}
\begin{proof}[Proof of \Cref{thm PS in terms of new}]
    By combining \eqref{eq: integral simplified} with \Cref{prop: non-vanishing for general n}, we deduce that $\mathbf{W}\neq 0$. Then, the result follows by \Cref{lem: reduction to W non zero}.
\end{proof}
\section{Integral representations in positive depth}\label{sec: pos. depth}
Let $V$ be an $F$-vector space of dimension $n$. Let $G:=\GL_F(V)\simeq G_n$ and $A:= \mathrm{End}_F(V)\simeq M_{n,n}(F).$ 
\subsection{Hereditary orders and minimax strata}\label{sec: hered. order}
Let $\Lambda$ be an $\mathcal{O}_F$-lattice chain associated to $V$, and let $\mathfrak{A}_0(\Lambda)\subseteq A$ be the associated \textit{hereditary $\mathcal{O}_F$-order}. The order $\mathfrak{A}_0(\Lambda)$ comes equipped with a decreasing filtration $\{\mathfrak{A}_r(\Lambda)\}_{r\in\mathbf{Z}}$ by $\mathcal{O}_F$-submodules, and $\mathfrak{P}=\mathfrak{P}(\Lambda):=\mathfrak{A}_1(\Lambda)$ is the Jacobson radical of $\mathfrak{A}_0(\Lambda)$. We define subgroups of $G$ by
$$U^0(\Lambda):=\mathfrak{A}_0(\Lambda)^\times,\ \ U^r(\Lambda):=1+\mathfrak{A}_r(\Lambda),$$
for $r\geq 1.$ For a concise but more detailed description of these objects we refer the reader to \cite[\S $6.1$]{girsch2025newformscuspidalrepresentations}.

A \textit{stratum} in $A$ is a $4$-tuple $[\Lambda,m,m',\beta]$ where 
\begin{enumerate}
    \item $\Lambda$ is an $\mathcal{O}_F$-lattice chain associated to $V$,
    \item $m>m'$ are integers,
    \item $\beta\in \mathfrak{P}^{-m}\subseteq A$.
\end{enumerate}
A stratum $[\Lambda,m,m-1,\beta]$ is called \textit{fundamental} if $\beta+\mathfrak{P}^{-m+1}$ contains no nilpotent elements.
\begin{lem}\emph{(\cite{BushnellKutzko},\cite{girsch2025newformscuspidalrepresentations})}\label{lem: crucial iso}
    For integers $0\leq a' \leq a\leq 2a'+1$, we have an isomorphism
    \begin{align*}\mathfrak{P}^{-a}/\mathfrak{P}^{-a'}&\simeq \mathrm{Hom}(U^{a'+1}(\Lambda)/U^{a+1}(\Lambda),\mathbf{C}^\times)\\
     b+ \mathfrak{P}^{-a'}&\mapsto\left[\psi_b:x \mapsto \psi_1(\mathrm{Tr}_{A/F}(b(x-1)))\right].\end{align*}
\end{lem}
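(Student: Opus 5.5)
The plan is to identify the finite group $U^{a'+1}(\Lambda)/U^{a+1}(\Lambda)$ with an additive quotient of lattices, and then apply duality for the trace pairing on $A$. The hypothesis $a\leq 2a'+1$ is exactly what makes the first step work.

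\textbf{Step 1 (the group is abelian).} I would show that $x\mapsto x-1$ induces a group isomorphism
$$U^{a'+1}(\Lambda)/U^{a+1}(\Lambda)\ \simeq\ \mathfrak{P}^{a'+1}/\mathfrak{P}^{a+1}.$$
For $y,z\in\mathfrak{P}^{a'+1}$ we have $(1+y)(1+z)=1+y+z+yz$. Here $yz\in\mathfrak{P}^{2a'+2}\subseteq\mathfrak{P}^{a+1}$, using that the filtration is multiplicative and $2a'+2\geq a+1$. So the map is a homomorphism to the additive quotient. It is bijective because $1+\mathfrak{P}^{a'+1}\to\mathfrak{P}^{a'+1}$ is a bijection and the kernel is $U^{a+1}(\Lambda)$. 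In particular $\psi_b$ is a character of $U^{a'+1}(\Lambda)$ trivial on $U^{a+1}(\Lambda)$ exactly when $y\mapsto\psi_1(\mathrm{Tr}_{A/F}(by))$ is a character of $\mathfrak{P}^{a'+1}$ trivial on $\mathfrak{P}^{a+1}$. Additivity of this map in $y$ is automatic.

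\textbf{Step 2 (lattice duality).} The key input, which I expect to be the main point, is the self-duality of the filtration under the pairing $(b,y)\mapsto\psi_1(\mathrm{Tr}_{A/F}(by))$ with $\psi_1$ of conductor $\mathfrak{p}_F$:
$$\{b\in A:\ \psi_1(\mathrm{Tr}_{A/F}(b\,\mathfrak{P}^k))=1\}=\mathfrak{P}^{1-k}\quad\text{for all }k\in\mathbf{Z}.$$
The inclusion $\supseteq$ holds because $\mathrm{Tr}_{A/F}(\mathfrak{P}^{1-k}\mathfrak{P}^k)\subseteq\mathrm{Tr}_{A/F}(\mathfrak{P})\subseteq\mathfrak{p}_F$, since elements of the Jacobson radical are topologically nilpotent. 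For the reverse inclusion I would reduce to the standard fact (Bushnell--Kutzko, \S1.1) that $\mathfrak{P}^{1-k}$ is the full $\mathcal{O}_F$-dual of $\mathfrak{P}^k$ under the trace form. Since $\mathfrak{P}$ is invertible, $\mathfrak{P}^k=\varpi_{\mathfrak{A}}^k\mathfrak{A}_0(\Lambda)$, so it suffices to check $k=0$. That case can be done in a block-matrix basis adapted to $\Lambda$, where $\mathfrak{A}_0(\Lambda)$ and $\mathfrak{P}$ are explicit standard orders. Since the pairing is $\mathcal{O}_F$-bilinear and the annihilator is an $\mathcal{O}_F$-lattice, it suffices to test $b$ against elementary matrices in the block decomposition.

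\textbf{Step 3 (conclusion).} Combining Steps 1 and 2, the map $b\mapsto\psi_b$ sends $\mathfrak{P}^{-a}$ into the characters of $\mathfrak{P}^{a'+1}$ trivial on $\mathfrak{P}^{a+1}$, because $\mathfrak{P}^{-a}=\mathfrak{P}^{1-(a+1)}$. Its kernel is $\mathfrak{P}^{1-(a'+1)}=\mathfrak{P}^{-a'}$. So it induces an injection
$$\mathfrak{P}^{-a}/\mathfrak{P}^{-a'}\hookrightarrow\mathrm{Hom}(\mathfrak{P}^{a'+1}/\mathfrak{P}^{a+1},\mathbf{C}^\times).$$
Surjectivity then follows from a cardinality comparison. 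Since $\mathfrak{P}^{-a}=\varpi_{\mathfrak{A}}^{-(a+a'+1)}\mathfrak{P}^{a'+1}$ and $\mathfrak{P}^{-a'}=\varpi_{\mathfrak{A}}^{-(a+a'+1)}\mathfrak{P}^{a+1}$, the two quotients have the same size. Equivalently, full nondegeneracy from Step 2 together with Pontryagin duality for finite abelian groups gives surjectivity. Composing with the isomorphism of Step 1 gives the stated isomorphism.
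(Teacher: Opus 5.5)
The paper does not prove this lemma. It is quoted from Bushnell--Kutzko and Girsch--Kurinczuk, and your argument is the standard proof given there: identify $U^{a'+1}(\Lambda)/U^{a+1}(\Lambda)$ with $\mathfrak{P}^{a'+1}/\mathfrak{P}^{a+1}$ via $x\mapsto x-1$, which is where $a\leq 2a'+1$ is used, then apply the self-duality $\mathfrak{P}^{k*}=\mathfrak{P}^{1-k}$ for $\psi_1\circ\mathrm{Tr}_{A/F}$. The approach is right, but one justification fails in the generality of the statement.

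\textbf{The generator $\varpi_{\mathfrak{A}}$.} A generator $\varpi_{\mathfrak{A}}$ with $\mathfrak{P}=\varpi_{\mathfrak{A}}\mathfrak{A}_0(\Lambda)$ exists only when $\mathfrak{A}_0(\Lambda)$ is principal. Indeed, if $\mathfrak{P}=x\mathfrak{A}_0(\Lambda)$, comparing left orders gives $x\mathfrak{A}_0(\Lambda)x^{-1}=\mathfrak{A}_0(\Lambda)$. So $x$ shifts the chain $\Lambda=(L_i)$ by one step, which forces all $\dim_{k_F}L_i/L_{i+1}$ to be equal. The lemma is stated for an arbitrary lattice chain, so your reduction to $k=0$ in Step 2 and your cardinality count in Step 3 both need another justification in general.

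\textbf{Repairs.} Both use only that $\mathfrak{P}$ is an invertible two-sided ideal, i.e.\ $\mathfrak{P}^j\mathfrak{P}^k=\mathfrak{P}^{j+k}$ for all $j,k\in\mathbf{Z}$. Write $L^*:=\{c\in A:\psi_1(\mathrm{Tr}_{A/F}(cL))=1\}$.

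For Step 2: since $\mathfrak{P}^k\mathfrak{A}_0(\Lambda)=\mathfrak{P}^k$, the condition $b\in(\mathfrak{P}^k)^*$ is equivalent to $b\mathfrak{P}^k\subseteq\mathfrak{A}_0(\Lambda)^*=\mathfrak{P}$. Multiplying on the right by $\mathfrak{P}^{-k}$, this is equivalent to $b\in\mathfrak{P}^{1-k}$. So your block-matrix check for $k=0$ does suffice.

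For Step 3: use your Pontryagin-duality alternative instead of the count. Consider the pairing $\mathfrak{P}^{-a}/\mathfrak{P}^{-a'}\times\mathfrak{P}^{a'+1}/\mathfrak{P}^{a+1}\to\mathbf{C}^\times$. Its left kernel is trivial by Step 2 with $k=a'+1$. Using $\mathrm{Tr}_{A/F}(by)=\mathrm{Tr}_{A/F}(yb)$, its right kernel is the image of $(\mathfrak{P}^{-a})^*\cap\mathfrak{P}^{a'+1}=\mathfrak{P}^{a+1}$, by Step 2 with $k=-a$, hence zero. A pairing of finite abelian groups with both kernels trivial is perfect.

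In the setting the paper actually uses, $\mathfrak{A}_0(\Lambda)=M_{n,n}(\mathcal{O}_F)$ and $\mathfrak{P}=\varpi_F\mathfrak{A}_0(\Lambda)$, so your version works verbatim there.

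\textbf{A wording slip.} The $\mathcal{O}_F$-valued trace dual of $\mathfrak{P}^k$ is $\mathfrak{P}^{1-k-e}$, with $e$ the period of $\Lambda$. It is the dual for $\psi_1\circ\mathrm{Tr}_{A/F}$ (trace values in $\mathfrak{p}_F$) that equals $\mathfrak{P}^{1-k}$. That is what your displayed formula correctly says, but your phrase ``full $\mathcal{O}_F$-dual under the trace form'' does not match it.
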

If $\pi$ is an irreducible representation of $G$, which is not of depth zero, then then there exists a fundamental stratum $[\Lambda,m,m-1,\beta]$ with $m\geq 1$ such that 
\begin{align}\label{eq: psi_beta}
    \mathrm{Hom}_{U^m(\Lambda)}(\psi_\beta,\pi)\neq 0,
\end{align}
where $\psi_\beta$ is regarded as a character of $U^m(\Lambda)$ via \Cref{lem: crucial iso} with $(a',a)=(m-1,m)$.
Following the work of Bushnell-Kutzko \cite{BushnellKutzko}, if \eqref{eq: psi_beta} holds we say that $\pi$ contains the fundamental stratum $[\Lambda,m,m-1,\beta]$. The authors of \textit{op.cit.} also show that if $\pi$ is cuspidal, then it contains a particularly nice fundamental stratum $[\Lambda,m,m-1,\beta]$, in the sense that it satisfies the following conditions:
\begin{enumerate}
    \item The $F$-algebra $E:=F[\beta]\subseteq A$ is a field,
    \item $\Lambda$ (which is, a priori, an $\mathcal{O}_F$-lattice chain) is an $\mathcal{O}_E$-lattice chain,
    \item The element $\beta$ is minimal over $F$, meaning:
    \begin{itemize}
        \item $\mathrm{gcd}(m,e(E/F))=1$,
        \item The element $\varpi_F^m \beta^{e(E/F)}$ generates the residue field extension $k_E/k_F$.
    \end{itemize}
\end{enumerate}
A fundamental stratum satisfying the above conditions is called a \textit{minimal} stratum.

A \textit{minimax} stratum is a minimal stratum $[\Lambda,m,m-1,\beta]$ which is also maximal in that $\mathrm{dim}_F(E)=\mathrm{dim}_F(F[\beta])=n$.
\begin{defn}\emph{(\cite[Definition $6.2$]{girsch2025newformscuspidalrepresentations})}
    A positive depth cuspidal representation of $G$ is called minimax if it contains a minimax stratum $[\Lambda,m,m-1,\beta]$ such that $$\mathrm{Hom}_{U^{\lf m/2 \rf+1}(\Lambda)}(\psi_\beta,\pi)\neq 0,$$
    where $\psi_\beta$ is regarded as a character of $U^{\lf m/2 \rf+1}(\Lambda)$ via \emph{\Cref{lem: crucial iso}} with $(a',a)=(\lf m/2 \rf, m)$.
    \emph{(}In other words, it also contains the simple stratum $[\Lambda,m,0,\beta].$\emph{)}
\end{defn}
\subsection{Setup}\label{sec: setup}
From now on, our setup is as follows. Let $[\Lambda,m,0,\beta]$ be a simple stratum in $A$ such that  $[\Lambda,m,m-1,\beta]$ is a minimax stratum in $A$, and the field extension $E/F$ is unramified, i.e. $e(E/F)=1$. As in \cite[\S $6.4$]{girsch2025newformscuspidalrepresentations}, we identify, once and for all, the $F$-vector space $V$ with $E=F[\beta]$ via the choice of the following ordered basis
$$\mathcal{B}':=\{\varpi_F^{km}\beta^k\ |\ 0\leq k\leq n-1\}.$$ 
\begin{rem}\label{rem: basis B'}
With respect to this basis, $\mathfrak{A}_0(\Lambda)=M_{n,n}(\mathcal{O}_F), \mathfrak{P}=M_{n,n}(\mathfrak{p}_F)$, and $U^r(\Lambda)=1+M_{n,n}(\mathfrak{p}_F^r)$ for all $r\geq 1.$ Moreover, in this basis, the character $\psi_\beta$ agrees on upper triangular unipotent matrices contained in $U_n(F)$ with the non-degenerate character $\Psi_m$ from \Cref{sec: add. chars}. This follows from the explicit description of $\psi_\beta$ in this basis given in \cite[\S $6.4$]{girsch2025newformscuspidalrepresentations}. Note that there is a typo in \textit{loc.cit.} as their ``$\psi^{\mathbf{t}_m}$'' should read ``$\psi^{\mathbf{t}_m^{-1}}$''. 
\end{rem}
\subsection{Simple characters and compact induction}\label{sec: simple chars}
Using the notation of \cite{girsch2025newformscuspidalrepresentations}, we define the following open subgroups of $G$:
\begin{itemize}
    \item $H^1:=(1+\mathfrak{p}_E)U^{\lf m/2 \rf+1}(\Lambda),$
    \item $J^1:=(1+\mathfrak{p}_E)U^{\lf (m+1)/2 \rf}(\Lambda)$,
    \item $J:=\mathcal{O}_E^\times U^{\lf (m+1)/2 \rf}(\Lambda),$
    \item $\mathbf{J}:=E^\times U^{\lf (m+1)/2 \rf}(\Lambda).$
\end{itemize}
We have the chain of normal inclusions $H^1\subseteq J^1\subseteq J \subseteq \mathbf{J}$. The set of \textit{simple characters} of $H^1$ associated to the simple stratum $[\Lambda,m,0,\beta]$ is
$$\mathcal{C}(\Lambda,\beta):=\left\{\theta\in \mathrm{Hom}(H^1,\mathbf{C}^\times)\ |\ \theta|_{U^{\lf m/2 \rf+1}(\Lambda)}=\psi_\beta\right\}.$$
Given a simple character $\theta\in \mathcal{C}(\Lambda,\beta)$, we can associate to it the character 
\begin{align}\label{eq: character}
    \theta_{\Psi_m}:(U_n(F)\cap J^1)H^1\rightarrow \mathbf{C}^\times,\ uh\mapsto \Psi_m(u)\theta(h)
\end{align}
which is well-defined since $U_n(F)\cap H^1=U_n(F)\cap U^{\lf m/2 \rf+1}(\Lambda)$ and both $\Psi_m$ and $\theta$ agree on this intersection by the second half of \Cref{rem: basis B'}. Then, we set 
$$\eta_{\theta}:=\mathrm{c}\text{-}\mathrm{Ind}_{(U_n(F)\cap J^1)H^1}^{J^1}\left(\theta_{\Psi_m}\right).$$
The representation $\eta_\theta$, which is a priori a representation of $J^1,$ extends to a representation of $\mathbf{J}$. Choose any extension $\lambda$ of $\eta_\theta$. Then the representation of $G$ given by 
$$\pi:=\mathrm{c}\text{-}\mathrm{Ind}_{\mathbf{J}}^G(\lambda)$$
is irreducible, cuspidal, minimax, and of integral depth. Moreover, all irreducible cuspidal minimax representations of integral depth arise this way. Finally, by Bushnell's conductor formula \eqref{eq: Bushnel conductor formula}, $c(\pi)=n(m+1).$
\subsection{Whittaker newforms as averages of  Pa\v{s}k\={u}nas-Stevens Whittaker functions}\label{sec: PS in terms of new pos depth}
Let $\pi=\mathrm{c}\text{-}\mathrm{Ind}_{\mathbf{J}}^G(\lambda)$ be an irreducible cuspidal minimax representation of $G_n$ with integral depth, constructed as in \Cref{sec: simple chars}. As in the depth-zero construction of \Cref{sec: PS in depth zero} but with some modifications, there exists a Pa\v{s}k\={u}nas-Stevens Whittaker function $W_{\pi,\Psi_m}^\mathrm{PS}$  characterized as the unique, up to scalars, non-zero element of the one-dimensional $((U_n(F)\cap J^1)H^1,\theta_{\Psi_m})$-isotypic subspace of the Whittaker model $\mathcal{W}(\pi,\Psi_m).$ The explicit construction of $W_{\pi,\Psi_m}^\mathrm{PS}$ is via a unipotent integral transform similar to the one given in \Cref{sec: PS in depth zero}. We refer the reader to \cite[Theorem $5.8$]{PaskunasStevens2008} for more details.
\begin{thm}\emph{(\cite[Theorem $6.5$ \& Corollary $6.8$] {girsch2025newformscuspidalrepresentations})}\label{thm: new interms of PS minimax} Let $\Lambda,m,\beta,\mathcal{B}'$ be as in \emph{\Cref{sec: setup}}. Let $\pi=\mathrm{c}\text{-}\mathrm{Ind}_{\mathbf{J}}^{G_n}(\lambda)$ be an irreducible cuspidal minimax representation of $G_n$ with integral depth, constructed as in \emph{\Cref{sec: simple chars}}. The Whittaker newform $W_{\pi,\psi_0}^\mathrm{new}\in \mathcal{W}(\pi,\psi_0)$ admits the following integral representation in terms of the Pa\v{s}k\={u}nas-Stevens Whittaker function $W_{\pi,\Psi_m}^\mathrm{PS}\in \mathcal{W}(\pi,\Psi_m)$:
    $$W_{\pi,\psi_0}^\mathrm{new}(g)=\int_{K_n(n(m+1))} W_{\pi,\Psi_m}^\mathrm{PS}(\Sigma_{m,n} gk \Sigma_{m,n}^{-1})\ dk,\ \mathrm{for\ all\ }g\in G_n,$$
    for an appropriately normalized Haar measure $dk$ on $K_n(n(m+1)).$
\end{thm}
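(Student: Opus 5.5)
The plan is the one used for \Cref{thm PS in terms of new}, run in the opposite direction. Define
$$\mathbf{W}^\natural(g):=\int_{K_n(n(m+1))} W_{\pi,\Psi_m}^\mathrm{PS}(\Sigma_{m,n} gk \Sigma_{m,n}^{-1})\ dk .$$
The aim is to show that $\mathbf{W}^\natural$ is a non-zero element of $\mathcal{W}(\pi,\psi_0)^{K_n(c(\pi))}$, where $c(\pi)=n(m+1)$ by \eqref{eq: Bushnel conductor formula}. The result then follows from \Cref{thm: existence uniequness of newforms over C}.

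\textbf{Step 1: $\mathbf{W}^\natural$ lies in the right space.} First check the character relation
$$\Psi_m(\Sigma_{m,n}u\Sigma_{m,n}^{-1})=\psi_1(\mathbf{t}_m^{-1}\mathbf{t}_m\Sigma_n u\Sigma_n^{-1}\mathbf{t}_m^{-1}\mathbf{t}_m)=\psi_1(\Sigma_n u \Sigma_n^{-1})=\psi_0(u)$$
for $u\in U_n(F)$, using \eqref{eq: character relations}. Hence $W\mapsto W(\Sigma_{m,n}(-)\Sigma_{m,n}^{-1})$ is a $G_n$-equivariant map from $\mathcal{W}(\pi,\Psi_m)$ to $\mathrm{Ind}_{U_n(F)}^{G_n}\psi_0$, and its image is $\mathcal{W}(\pi,\psi_0)$. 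Exactly as in \Cref{lem: W is in Whit model}, the integrand is smooth in $k$ and the domain is compact, so $\mathbf{W}^\natural\in\mathcal{W}(\pi,\psi_0)$. It is right $K_n(n(m+1))$-invariant because it is an average over that group. Therefore $\mathbf{W}^\natural$ is a scalar multiple of $W_{\pi,\psi_0}^\mathrm{new}$, and everything reduces to showing $\mathbf{W}^\natural\neq 0$.

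\textbf{Step 2: reduce to a vector in the induced model.} To prove non-vanishing I would pass to the induced model, as in the proof of \Cref{prop: non-vanishing for general n}. Let $\mathbf{Y}$ be the Pa\v{s}k\={u}nas-Stevens functional in $\mathrm{Hom}_{U_n(F)}(\pi,\Psi_m)$, and let $f^\mathrm{PS}\in\mathrm{c}\text{-}\mathrm{Ind}_{\mathbf{J}}^{G_n}\lambda$ be the vector with $\mathbf{Y}(g\cdot f^\mathrm{PS})=W^\mathrm{PS}_{\pi,\Psi_m}(g)$. This vector is supported on $\mathbf{J}$ and is the $((U_n(F)\cap J^1)H^1,\theta_{\Psi_m})$-eigenvector of $\lambda$. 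Then
$$\mathbf{W}^\natural(g)=\mathbf{Y}\big(\Sigma_{m,n}g\cdot \mathbf{f}\big),\qquad \mathbf{f}:=\int_{K_n(n(m+1))}k\Sigma_{m,n}^{-1}\cdot f^\mathrm{PS}\,dk .$$
So it suffices to show two things: that $\mathbf{f}\neq0$, and that some translate of $\mathbf{f}$ is detected by $\mathbf{Y}$. In fact $\mathbf{f}$ is a $K_n(n(m+1))$-fixed vector, so by uniqueness it is a multiple of $f^\mathrm{new}$. If $\mathbf{f}\neq 0$, then $\mathbf{W}^\natural$ is a non-zero multiple of the image of the newform, which is non-zero because $\mathbf{Y}$ is injective on $\pi$.

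\textbf{Step 3: compute $\mathbf{f}$ at the point $\Sigma_{m,n}^{-1}$ (the main obstacle).} I expect this to be the hardest step. One has
$$\mathbf{f}(\Sigma_{m,n}^{-1})=\int_{K_n(n(m+1))}f^\mathrm{PS}(\Sigma_{m,n}^{-1}k\Sigma_{m,n}^{-1}\cdots)\,dk,$$
and it is more convenient to write this as an integral over $k\in K_n(n(m+1))$ with $\Sigma_{m,n}k\Sigma_{m,n}^{-1}\in\mathbf{J}$. Conjugation by $\Sigma_{m,n}$ multiplies the $(i,j)$ entry by $\varpi_F^{(m+1)(j-i)}$. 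Using an Iwahori-type factorization of $K_n(n(m+1))$ into lower unipotent, diagonal and upper unipotent pieces, one should find the following.
\begin{enumerate}
\item The upper part lands in $U_n(F)\cap J^1$, where it acts through $\Psi_m$.
\item The diagonal part contributes central and $\mathcal{O}_E^\times$-type factors.
\item Only a controlled set of lower unipotent elements survives in $\mathbf{J}$.
\end{enumerate}
The surviving contribution should then be a finite sum of matrix coefficients of $\lambda$, namely the values $\lambda(\bar b)v^\mathrm{PS}$ with $\bar b$ running over representatives of $B_{n-1}^\mathrm{op}$ modulo the relevant filtration subgroup. This is the positive-depth analogue of \Cref{thm: f^new interms of Bessel}. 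Its non-vanishing would be proved as in the depth-zero Bessel sum \cite[\S $2.2$(B$1$)]{girsch2025newformscuspidalrepresentations}: pair the sum against $v^\mathrm{PS}$ itself, and use that $\eta_\theta$ is induced from $\theta_{\Psi_m}$, so that only the identity coset contributes. Carefully tracking which lower unipotent elements lie in $\mathbf{J}=E^\times U^{\lf (m+1)/2\rf}(\Lambda)$ in the basis $\mathcal{B}'$ is where I expect the real work to be.
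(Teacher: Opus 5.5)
\textbf{Steps 1--2 are sound, but the crux is unproved and evaluated at the wrong point.} Two slips of wording first. The map $W\mapsto W(\Sigma_{m,n}(-)\Sigma_{m,n}^{-1})$ is not $G_n$-equivariant; it is $\pi(\Sigma_{m,n}^{-1})$ followed by the equivariant map $W\mapsto W(\Sigma_{m,n}\,\cdot\,)$, which is enough for your purpose. Also, what is injective is the Whittaker map $f\mapsto \mathbf{Y}((-)\cdot f)$, not $\mathbf{Y}$ itself.

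Your Steps 1--2 are also how the statement arises in its source. The paper does not prove it; it imports it from Girsch--Kurinczuk. Their Theorem 6.5(2), quoted later in the paper, says that your $\mathbf{f}$ (with $f^{\mathrm{PS}}=\mathscr{J}$) is the newform, normalized by $f^{\mathrm{new}}(\Sigma_{m,n})(1)=1$. Applying the Pa\v{s}k\={u}nas--Stevens functional to it is their Corollary 6.8, i.e.\ your Step 2. So all the content lies in $\mathbf{f}\neq 0$, and Step 3 does not prove it; it only lists what ``should'' happen.

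Moreover, the point you evaluate at is wrong. We have $\mathbf{J}\subseteq Z_nK_n$. For $n\geq 2$, the matrix $\Sigma_{m,n}^{-1}k\Sigma_{m,n}^{-1}$ has a primitive last row but a first row of negative valuation, so it never lies in $Z_nK_n$. Hence $\mathbf{f}(\Sigma_{m,n}^{-1})=0$ identically. The correct point is $\Sigma_{m,n}$, where $\mathbf{f}(\Sigma_{m,n})=\int_H\lambda(h)\mathcal{J}\,dh$ with $H:=\Sigma_{m,n}K_n(n(m+1))\Sigma_{m,n}^{-1}\cap\mathbf{J}$. This is what your next sentence tacitly uses.

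\textbf{The mechanism and the missing input.} Your closing idea (pair against $v^{\mathrm{PS}}$ and use that $\eta_\theta$ is induced from $\theta_{\Psi_m}$) is the right one. However, it is not an analogue of (B1), and it rests on facts you have not supplied.

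First, no residue-field $B^{\mathrm{op}}_{n-1}(k_F)$-sum occurs. An element $h\in H$ has unit determinant, so it lies in $J$. Its last column is $\equiv e_n$ modulo $\mathfrak{p}_F^{m+1}$, and $e_n=(\varpi_F^m\beta)^{n-1}$ is a unit of $\mathcal{O}_E$. Put $r=\lfloor (m+1)/2\rfloor$ and write $h=\epsilon(1+y)$ with $\epsilon\in\mathcal{O}_E^\times$ and $y\in M_{n,n}(\mathfrak{p}_F^{r})$. The last-column condition forces $\epsilon\in 1+\mathfrak{p}_E^{r}$, so $H\subseteq U^{r}(\Lambda)\subseteq J^1$ and everything happens inside the Heisenberg representation.

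Second, evaluating at $1$ gives $\mathbf{f}(\Sigma_{m,n})(1)=\int_H\mathcal{J}(h)\,dh$. Set $\mathcal{U}:=(U_n(F)\cap J^1)H^1$. It is normal in $J^1$ with $\mathcal{U}/H^1$ Lagrangian, so $\mathcal{J}|_{J^1}$ is supported on $\mathcal{U}$, where it equals $\theta_{\Psi_m}$. The real point is therefore that $\theta_{\Psi_m}\equiv 1$ on $H\cap\mathcal{U}$, which you never address. It can be proved in two steps:
\begin{enumerate}
\item One checks that $H\cap\mathcal{U}\subseteq U^{\lfloor m/2\rfloor+1}(\Lambda)$. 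This is automatic for $m$ odd. For $m=2t$, write $h=u\epsilon(1+y)$ with $u\in U_n(F)\cap J^1$, $\epsilon\in 1+\mathfrak{p}_E$ (necessarily in $1+\mathfrak{p}_E^{t}$) and $y\in M_{n,n}(\mathfrak{p}_F^{t+1})$. Reduce modulo $\mathfrak{p}_F^{t+1}$: the image of $\varpi_F^{-t}(\epsilon-1)$ in $k_E\subset M_{n,n}(k_F)$ has zero last row, hence vanishes.
\item On $U^{\lfloor m/2\rfloor+1}(\Lambda)$ we have $\theta_{\Psi_m}(h)=\psi_1(\mathrm{Tr}(\beta(h-1)))$. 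In the basis $\mathcal{B}'$, $\beta$ is $\varpi_F^{-m}$ times a companion matrix, so this trace only sees the superdiagonal and last row of $h-1$. For $h\in H$ these entries lie in $\mathfrak{p}_F^{m+1}$.
\end{enumerate}
Together these give $\mathbf{f}(\Sigma_{m,n})(1)=\mathrm{vol}(H\cap\mathcal{U})\neq 0$. Without an argument of this kind, your proposal stops exactly where the actual proof begins.
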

\subsection{Pa\v{s}k\={u}nas-Stevens Whittaker functions as averages of Whittaker newforms}
Analogously to \Cref{PS interms of new}, the goal of this section is to establish the reverse averaging formula to the one in \Cref{thm: new interms of PS minimax}.
\begin{thm}\label{thm: PS in terms of new minimax}
    Let $\Lambda,m,\beta,\mathcal{B}'$ be as in \emph{\Cref{sec: setup}}, and the character $\theta_{\Psi_m}$ be as in \eqref{eq: character}. Let $\pi=\mathrm{c}\text{-}\mathrm{Ind}_{\mathbf{J}}^{G_n}(\lambda)$ be an irreducible cuspidal minimax representation of $G_n$ with integral depth, constructed as in \emph{\Cref{sec: simple chars}}. The Pa\v{s}k\={u}nas-Stevens Whittaker function $W_{\pi,\Psi_m}^\mathrm{PS}\in \mathcal{W}(\pi,\Psi_m)$ admits the following integral representation in terms of the Whittaker newform $W_{\pi,\psi_0}^\mathrm{new}\in \mathcal{W}(\pi,\psi_0)$:
    $$W_{\pi,\Psi_m}^\mathrm{PS}(g)=\int_{(U_n(F)\cap J^1)H^1}W_{\pi,\psi_0}^\mathrm{new}(\Sigma_{m,n}^{-1} gx \Sigma_{m,n})\theta_{\Psi_m}(x^{-1})\ dx,\ \mathrm{for\ all}\ g\in G_n,$$
    for an appropriately normalized Haar measure $dx$ on $(U_n(F)\cap J^1)H^1$.
\end{thm}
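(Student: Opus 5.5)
Define $\mathbf{W}(g):=\int_{(U_n(F)\cap J^1)H^1}W_{\pi,\psi_0}^\mathrm{new}(\Sigma_{m,n}^{-1}gx\Sigma_{m,n})\theta_{\Psi_m}(x^{-1})\,dx$ and follow the template of \Cref{PS interms of new}.

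\emph{Step 1: $\mathbf{W}\in\mathcal{W}(\pi,\Psi_m)$.} Since $\Sigma_{m,n}=\mathbf{t}_m\Sigma_n$, the relations \eqref{eq: character relations} give $\psi_0^{\Sigma_{m,n}^{-1}}=\psi_1^{\mathbf{t}_m^{-1}}=\Psi_m$. Hence $W\mapsto W(\Sigma_{m,n}^{-1}(-))$ is a $G_n$-equivariant map $\mathcal{W}(\pi,\psi_0)\to\mathcal{W}(\pi,\Psi_m)$. Because the group $(U_n(F)\cap J^1)H^1$ is compact and the integrand is smooth, the argument of \Cref{lem: W is in Whit model} applies verbatim.

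\emph{Step 2: reduction to $\mathbf{W}\neq 0$.} By construction, $\mathbf{W}$ lies in the $((U_n(F)\cap J^1)H^1,\theta_{\Psi_m})$-isotypic subspace of $\mathcal{W}(\pi,\Psi_m)$. This subspace is one-dimensional and spanned by $W^\mathrm{PS}_{\pi,\Psi_m}$ (\Cref{sec: PS in terms of new pos depth}). So it suffices to show $\mathbf{W}(1)\neq0$.

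\emph{Step 3: unfolding.} Write $x=uh$ with $u\in U_n(F)\cap J^1$ and $h\in H^1$; this is legitimate since $H^1$ is normal in $J^1$, using \cite[Lemma $2.4$]{kurinczuk2017rankin}. Then $\Sigma_{m,n}^{-1}u\Sigma_{m,n}\in U_n(F)$, and $\psi_0(\Sigma_{m,n}^{-1}u\Sigma_{m,n})=\Psi_m(u)$, which cancels against $\theta_{\Psi_m}(u^{-1})$. This leaves
$$\mathbf{W}(1)=c\int_{H^1}W^\mathrm{new}_{\pi,\psi_0}(\Sigma_{m,n}^{-1}h\Sigma_{m,n})\theta(h^{-1})\,dh .$$

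\emph{Step 4: non-vanishing.} This is the main step. Mimicking \Cref{prop: non-vanishing for general n}, realize $W^\mathrm{new}_{\pi,\psi_0}(g)=\mathbf{Y}_0(g\cdot f^\mathrm{new})$, where $\mathbf{Y}_0:=\mathbf{Y}(\Sigma_{m,n}\cdot -)$ and $\mathbf{Y}\in\mathrm{Hom}_{U_n(F)}(\pi,\Psi_m)$ is the Pa\v{s}k\={u}nas-Stevens functional. The integral then becomes $\mathbf{Y}(\mathbf{f})$ with $\mathbf{f}:=\int_{H^1}\theta(h^{-1})\,h\Sigma_{m,n}\cdot f^\mathrm{new}\,dh$, which lies in the $(H^1,\theta)$-isotypic part $\pi^{(H^1,\theta)}$.

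The needed analogue of \Cref{lem K_n^1 invariants} is that $\pi^{(H^1,\theta)}$ consists of functions supported on $\mathbf{J}$, identified with $\lambda^{(H^1,\theta)}=\lambda$ via $f\mapsto f(1)$. This should follow from Mackey theory and the intertwining property $I_G(\theta)=J^1G_EJ^1=\mathbf{J}$, since $E$ has degree $n$. Then $\mathbf{f}$ is determined by $\mathbf{f}(1)$, which up to a nonzero scalar is the $\theta$-projection of $f^\mathrm{new}(\Sigma_{m,n})\in\lambda$.

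One then needs the positive-depth analogue of \Cref{thm: f^new interms of Bessel}, namely Girsch-Kurinczuk's explicit formula for $f^\mathrm{new}(\Sigma_{m,n})$ from their Theorem $6.5$. This expresses it as a sum of translates, by a set of representatives analogous to $B^\mathrm{op}_{n-1}$, of the Bessel-type vector in $\eta_\theta$ generating the $\theta_{\Psi_m}$-line. Applying $\mathbf{Y}$ turns $\mathbf{Y}(\mathbf{f})$ into a finite sum of values of $W^\mathrm{PS}_{\pi,\Psi_m}$ on these representatives. The nonvanishing of this sum is exactly the nonvanishing Girsch-Kurinczuk establish to get $W^\mathrm{new}(1)\neq0$ in their Corollary $6.8$, so I would cite it.

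The expected obstacle is matching their formula precisely: confirming that the support/isotypic lemma holds for $H^1$, and that the resulting sum coincides with the one they prove nonzero. If it does not match directly, the fallback is to show the sum equals a nonzero multiple of $W^\mathrm{new}(1)$, using \Cref{thm: new interms of PS minimax} at $g=1$.
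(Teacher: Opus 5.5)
Steps 1--3 match the paper's proof. Step 4 takes a genuinely different route.

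You transplant the paper's own depth-zero argument: the functional $\mathbf{Y}_0=\mathbf{Y}(\Sigma_{m,n}\cdot -)$, the vector $\mathbf{f}$, and a support lemma for $\pi^{(H^1,\theta)}$. In positive depth the paper does something else. It inserts the Girsch--Kurinczuk averaging formula for $W^{\mathrm{new}}_{\pi,\psi_0}$ into the $H^1$-integral and studies $\mathbf{I}(g)=\int_{H^1}W^{\mathrm{PS}}_{\pi,\Psi_m}(hg)\theta(h^{-1})\,dh$ inside the Whittaker model:
\begin{itemize}
\item bi-$\theta_{\Psi_m}$-equivariance forces $\mathrm{supp}(\mathbf{I})\subseteq I_{G_n}(\theta)=\mathbf{J}$;
\item Pa\v{s}k\={u}nas--Stevens give $\mathbf{I}|_{\mathbf{J}}\propto\mathcal{J}$, so $\mathbf{I}=\mathscr{J}(-)(1)$;
\item the formula $f^{\mathrm{new}}=\int_{K_n(n(m+1))}\mathscr{J}((-)k\Sigma_{m,n}^{-1})\,dk$ turns the remaining average into $f^{\mathrm{new}}(\Sigma_{m,n})(1)=1$.
\end{itemize}
The two support statements are the same fact, $I_{G_n}(\theta)=\mathbf{J}$, seen on the two sides of the Whittaker map. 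Your version is uniform with depth zero and does not need the averaging formula for $W^{\mathrm{new}}_{\pi,\psi_0}$ at all. The paper's version stays in the Whittaker model and never has to describe $\pi^{(H^1,\theta)}$. Your support lemma is correct and in fact elementary. If $f\in\pi^{(H^1,\theta)}$ and $f(g)\neq 0$, take $h\in H^1\cap g^{-1}H^1g$ and compare $f(gh)=\theta(h)f(g)$ with $f(gh)=\lambda(ghg^{-1})f(g)=\theta(ghg^{-1})f(g)$. This gives $g\in I_{G_n}(\theta)=\mathbf{J}$.

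The one thing to fix is how you close Step 4. The nonvanishing you need is not the one behind $W^{\mathrm{new}}_{\pi,\psi_0}(1)\neq 0$, and no matching of sums is required. Write $f_{[v]}$ for the function supported on $\mathbf{J}$ with $f_{[v]}(1)=v$.
\begin{itemize}
\item Since $\lambda|_{H^1}$ is $\theta$-isotypic, $\mathbf{f}(1)=\mathrm{vol}(H^1)f^{\mathrm{new}}(\Sigma_{m,n})$. Hence $\mathbf{f}=\mathrm{vol}(H^1)f_{[v]}$ with $v=f^{\mathrm{new}}(\Sigma_{m,n})\in\mathcal{W}(\lambda,\theta_{\Psi_m})$.
\item For $j\in\mathbf{J}$ one has $f_{[\lambda(j)\mathcal{J}]}=j\cdot\mathscr{J}$. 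So $\mathbf{Y}(f_{[\lambda(j)\mathcal{J}]})=W^{\mathrm{PS}}_{\pi,\Psi_m}(j)=\mathcal{J}(j)=(\lambda(j)\mathcal{J})(1)$.
\item These translates span $\lambda$, so by linearity $\mathbf{Y}(f_{[v]})=v(1)$ for every $v$.
\end{itemize}
Therefore $\mathbf{Y}(\mathbf{f})=\mathrm{vol}(H^1)\,f^{\mathrm{new}}(\Sigma_{m,n})(1)=\mathrm{vol}(H^1)\neq 0$, by the normalization in Girsch--Kurinczuk's Theorem 6.5. Your ``finite sum of values of $W^{\mathrm{PS}}_{\pi,\Psi_m}$'' is exactly $f^{\mathrm{new}}(\Sigma_{m,n})(1)$.

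By contrast, $W^{\mathrm{new}}_{\pi,\psi_0}(1)=\mathbf{Y}(\Sigma_{m,n}\cdot f^{\mathrm{new}})$ carries no $(H^1,\theta)$-projection. Through the averaging formula it a priori involves $W^{\mathrm{PS}}_{\pi,\Psi_m}$ on $\Sigma_{m,n}K_n(n(m+1))\Sigma_{m,n}^{-1}\cap U_n(F)\mathbf{J}$, not only on $\mathbf{J}$. So neither the appeal to their Corollary 6.8 nor your fallback is a safe way to finish. With the replacement above, your argument is complete.
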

The remainder of this section is dedicated to proving the above theorem. We introduce the function $\mathbf{W}_m:G_n\rightarrow \mathbf{C}$ given by
$$\mathbf{W}_m(g):=\int_{(U_n(F)\cap J^1)H^1}W_{\pi,\psi_0}^\mathrm{new}(\Sigma_{m,n}^{-1} gx \Sigma_{m,n})\theta_{\Psi_m}(x^{-1})\ dx,\ g\in G_n.$$
\begin{lem}\label{lem: reduction to W non-zero minimax}
    The function $\mathbf{W}_m$ is an element of the Whittaker model $\mathcal{W}(\pi,\Psi_m)$. Moreover, if $\mathbf{W}_m\neq 0$, then \emph{\Cref{thm: new interms of PS minimax}} holds.
    \begin{proof}
        The proof is exactly the same as that of \Cref{lem: W is in Whit model} and \Cref{lem: reduction to W non zero}. For the second part we use the characterizing property of $W_{\pi,\Psi_m}^\mathrm{PS}$ as the unique, up to scalars, non-zero element of the one-dimensional space given by the $((U_n(F)\cap J^1)H^1,\theta_{\Psi_m})$-isotypic component of $\mathcal{W}(\pi,\Psi_m).$
    \end{proof}
\end{lem}
\subsubsection{Non-vanishing of $\mathbf{W}_m(1)$} By the above lemma, the proof of \Cref{thm: new interms of PS minimax} reduces to showing that $\mathbf{W}_m(1)\neq 0$. Since $U_n(F)\cap J^1$ normalizes $H^1$, we can unfold the integral $\mathbf{W}_m(1)$ in exactly the same way as in the depth zero case, by using \cite[Lemma $2.4$]{kurinczuk2017rankin}. Thus, 
$$\mathbf{W}_m(1)=\int_{U_n(F)\cap J^1}\int_{H^1} W_{\pi,\psi_0}^\mathrm{new}(\Sigma_{m,n}^{-1}u h \Sigma_{m,n})\theta_{\Psi_m}(uh)^{-1}\ du\ dh,$$
for appropriately normalized Haar measures $du$ on $U_n(F)\cap J^1$ and $dh$ on $H^1.$ Now notice that $\psi_0(\Sigma_{m,n}^{-1}u\Sigma_{m,n})=\theta_{\Psi_m}(u).$ Thus, 
\begin{align}\label{eq: reduction pos. depth}
    \mathbf{W}_m(1)=\mathrm{vol}(U_n(F)\cap J^1,du)\cdot \int_{H^1}W_{\pi,\psi_0}^\mathrm{new}(\Sigma_{m,n}^{-1}h \Sigma_{m,n})\theta_{\Psi_m}(h^{-1})\ dh.
\end{align}
In order to show that $\mathbf{W}_m(1)\neq 0,$ it suffices to show that the integral over $H^1$ appearing in \eqref{eq: reduction pos. depth} is non-zero.
\begin{prop}\label{prop: main in minimax}
    With the setup as in the statement of \emph{\Cref{thm: PS in terms of new minimax}}, we have 
    \begin{align}\label{eq: integral in q.}\int_{H^1}W_{\pi,\psi_0}^\mathrm{new}(\Sigma_{m,n}^{-1}h \Sigma_{m,n})\theta_{\Psi_m}(h^{-1})\ dh\neq 0.\end{align}
    \begin{proof}
        Start by noting that $\theta_{\Psi_m}(h^{-1})=\theta(h^{-1})$ for all $h\in H^1$ by definition. By \Cref{thm: new interms of PS minimax}, we have \begin{align}\label{eq: beginning of proof.}\nonumber\int_{H^1}W_{\pi,\psi_0}^\mathrm{new}(\Sigma_{m,n}^{-1}h \Sigma_{m,n})\theta_{\Psi_m}(h^{-1})\ dh&=\int_{H^1}\int_{K_n(n(m+1))}W_{\pi,\Psi_m}^\mathrm{PS}(h \Sigma_{m,n}k \Sigma_{m,n}^{-1})\theta(h^{-1})\ dk\ dh\\
        &=\int_{K_n(n(m+1))} \mathbf{I}(\Sigma_{m,n} k \Sigma_{m,n}^{-1})\ dk,\end{align}
        where $$\mathbf{I}:G_n\rightarrow \mathbf{C},\ \mathbf{I}(g):=\int_{H^1} W_{\pi,\Psi_m}^\mathrm{PS}(hg)\theta (h^{-1})\ dh,\ g\in G_n.$$
        We now prove the following technical lemma regarding the auxiliary integral $\mathbf{I}$. 
        \begin{lem}\label{lem: lemma about I}
            The following hold:
            \begin{enumerate}
            \item The integral $\mathbf{I}$ satisfies the bi-equivariance property
            $$\mathbf{I}(g_1gg_2)=\theta_{\Psi_m}(g_1)\theta_{\Psi_m}(g_2)\mathbf{I}(g),\ \mathrm{for\ all}\ g_1,g_2\in (U_n(F)\cap J^1)H^1,\ g\in G_n. $$
                \item Let $g\in G_n.$
                $$\mathbf{I}(g)\neq 0\ \implies\ g\in I_{G_n}(\theta_{\Psi_m}):=\left\{g \in G_n\ |\ \substack{\theta_{\Psi_m}(\gamma)=\theta_{\Psi_m}(g\gamma g^{-1})\ \\ \forall\ \gamma \in (U_n(F)\cap J^1)H^1\cap g^{-1}(U_n(F)\cap J^1)H^1 g}\right\}.$$
                \item The support $\mathrm{supp}(\mathbf{I})\subseteq \mathbf{J}$, where the subgroup $\mathbf{J}\subseteq G_n$ is given at the beginning of \emph{\Cref{sec: simple chars}}.
            \end{enumerate}
            \begin{proof}
                We start with part $(1)$. Let $g_1,g_2\in (U_n(F)\cap J^1)H^1.$ Since $W_{\pi,\Psi_m}^\mathrm{PS}$ lies in the $((U_n(F)\cap J^1)H^1,\theta_{\Psi_m})$-isotypic subspace of $\mathcal{W}(\pi,\Psi_m)$, we immediately see that $\mathbf{I}(g_1gg_2)=\theta_{\Psi_m}(g_2)\mathbf{I}(g_1g).$ Write $g_1=n_1h_1$ with $n_1\in U_n(F)\cap J^1$ and $h_1\in H^1$. Then, 
                \begin{align*}\mathbf{I}(g_1g)&=\int_{H^1} W_{\pi,\Psi_m}^\mathrm{PS}(hn_1h_1g)\theta(h^{-1})\ dh\\
                &=\theta(n_1h_1 n_1^{-1})\int_{H^1}W_{\pi,\Psi_m}^\mathrm{PS}(hn_1g)\theta(h^{-1})\ dh\\
                &=\theta(h_1)\int_{H^1}W_{\pi,\Psi_m}^\mathrm{PS}(hn_1g)\theta(h^{-1})\ dh\\
                &=\theta(h_1)\Psi_m(n_1)\int_{H^1}W_{\pi,\Psi_m}^\mathrm{PS}(hg)\theta((n_1hn_1^{-1})^{-1})\ dh\\
                &=\theta_{\Psi_m}(g_1)\mathbf{I}(g).\end{align*}
                 The second equality follows from the fact that $n_1h_1n_1^{-1}\in H^1$ together with a change of variables $h\rightsquigarrow h(n_1h_1n_1^{-1})$; the third equality follows, again, from the fact that $n_1h_1n_1^{-1}\in H^1$, and the fact that $\theta=\theta_{\Psi_m}|_{H^1}$; the fourth equality follows from the left equivariance of the $\Psi_m$-Whittaker function and a change of variables $h\rightsquigarrow n_1^{-1}hn_1\in H^1$, and the last equality follows, again, from the fact that $\theta=\theta_{\Psi_m}|_{H^1}$. This concludes the proof of part $(1)$.

                 For part $(2),$ assume that $\mathbf{I}(g)\neq 0$ for $g\in G_n$. Let $\gamma$ be any element of $(U_n(F)\cap J^1)H^1\cap g^{-1}(U_n(F)\cap J^1)H^1g$. Then, we have the chain of equalities 
                 \begin{align*}
                     \theta_{\Psi_m}(\gamma)\mathbf{I}(g)&=\mathbf{I}(g\gamma )\\
                     &=\mathbf{I}(g\gamma g^{-1} g)\\
                     &=\theta_{\Psi_m}(g\gamma g^{-1})\mathbf{I}(g),
                 \end{align*}
                 where the first equality follows from part $(1)$, and the last equality follows from the fact that $g\gamma g^{-1} \in (U_n(F)\cap J^1)H^1$ together with part $(1).$ Since by assumption $\mathbf{I}(g)\neq 0$, we get that $\theta_{\Psi_m}(\gamma)=\theta_{\Psi_m}(g\gamma g^{-1})$ for all $\gamma \in (U_n(F)\cap J^1)H^1\cap g^{-1}(U_n(F)\cap J^1)H^1g$, which in turn implies that $g\in I_{G_n}(\theta_{\Psi_m})$ as claimed.

                 Finally, we prove part $(3)$. Since $\theta_{\Psi_m}|_{H^1}=\theta,$ we see that $$I_{G_n}(\theta_{\Psi_m})\subseteq I_{G_n}(\theta):=\left\{g \in G_n\ |\ \substack{\theta(h)=\theta(gh g^{-1})\ \\\forall\ h \in H^1\cap g^{-1}H^1 g}\right\}.$$
                 Combining this with part $(2)$, we see that $\mathrm{supp(\mathbf{I})}\subseteq I_{G_n}(\theta)$. But by \cite[($3.3.2$)]{BushnellKutzko}, applied to this integral-depth minimax setting, we have that $I_{G_n}(\theta)=\mathbf{J}$. This proves part $(3)$ and concludes the proof of the lemma.
            \end{proof}
        \end{lem}
        We now return to the proof of \Cref{prop: main in minimax}. We know from \Cref{lem: lemma about I}$(3)$ that $\mathbf{I}$ vanishes away from $\mathbf{J}$. Now, let $j\in \mathbf{J}$, and let $\mathcal{J}=\mathcal{J}_\lambda:\mathbf{J}\rightarrow \mathbf{C}$ denote the Pa\v{s}k\={u}nas-Stevens Bessel function associated to $\lambda$ as in \cite[\S $5.1$]{PaskunasStevens2008}. Then, 
        \begin{align*}\mathbf{I}(j)&=\int_{H^1} W_{\pi,\Psi_m}^\mathrm{PS}(hj)\theta(h^{-1})\ dh\\
        &=\int_{H^1}\mathcal{J}(hj)\theta(h^{-1})\ dh\\
        &=\int_{H^1}\mathcal{J}(j)\theta_{\Psi_m}(h)\theta(h^{-1})\ dh\\
        &=\mathrm{vol}(H^1,dh) \mathcal{J}(j),
        \end{align*}
        where the second equality follows from the ``moreover'' part of \cite[Theorem $5.8$]{PaskunasStevens2008}, and the third equality follows from \cite[Proposition $5.3$(iii)]{PaskunasStevens2008}. Thus, the auxiliary integral $\mathbf{I}$ vanishes away from $\mathbf{J}$, and is equal to the Pa\v{s}k\={u}nas-Stevens Bessel function $\mathcal{J}$ on $\mathbf{J}$, up to a non-zero scalar. As in \cite[Theorem $6.5(2)$]{girsch2025newformscuspidalrepresentations}, we identify $\lambda$ with the model $\mathcal{W}(\lambda,\theta_{\Psi_m})\subseteq \mathrm{c}\text{-}\mathrm{Ind}_{U_n(F)\cap J^1)H^1}^\mathbf{J}(\theta_{\Psi_m})$, which contains the Bessel function $\mathcal{J}$. Then, we consider the associated Bessel vector 
        \begin{align*}
            \mathscr{J}\in \pi\simeq \mathrm{c}\text{-}\mathrm{Ind}_\mathbf{J}^{G_n} \lambda,\ \mathscr{J}(g):=\begin{dcases}
            \lambda(g)\mathcal{J},&\ \text{if}\ g\in \mathbf{J},\\
            0,&\ \text{otherwise}
            \end{dcases}
        \end{align*}
        which by construction satisfies $$\mathscr{J}(g)(1)=\begin{dcases}\mathcal{J}(g),\ &\mathrm{if}\ g\in \mathbf{J},\\
        0,\ &\mathrm{otherwise}\end{dcases}.$$
        Hence, up to a non-zero scalar (which we ommit from the notation), $$\mathbf{I}(g)=\mathscr{J}(g)(1),\ \mathrm{for\ all\ } g\in G_n.$$ Using this, \eqref{eq: beginning of proof.} implies that the integral in question (i.e. \eqref{eq: integral in q.}) can be written as 
        \begin{align}\label{eq: rewritten}
            \int_{H^1}W_{\pi,\psi_0}^\mathrm{new}(\Sigma_{m,n}^{-1}h \Sigma_{m,n})\theta_{\Psi_m}(h^{-1})\ dh=\int_{K_n(n(m+1))} \mathscr{J}(\Sigma_{m,n}k \Sigma_{m,n}^{-1})\ dk(1).
        \end{align}
      Finally, we need the following result of \cite{girsch2025newformscuspidalrepresentations}.
      \begin{thm}\emph{(\cite[Theorem $6.5$(2)]{girsch2025newformscuspidalrepresentations})}\label{eq: thm gk 6.5(2)}
          Let the setup be as in the statement of \emph{\Cref{thm: PS in terms of new minimax}}, and let $f^\mathrm{new}\in \mathrm{c}\text{-}\mathrm{Ind}_\mathbf{J}^{G_n}(\lambda)\simeq \pi$ denote the unique, up to scalars, newform. Then, 
          \begin{enumerate}
              \item $f^\mathrm{new}(\Sigma_{m,n})(1)=1$.
              \item $f^\mathrm{new}(g)=\displaystyle\int_{K_n(n(m+1))}\mathscr{J}(gk\Sigma_{m,n}^{-1})\ dk$ for an appropriately normalized Haar measure $dk$ on $K_n(n(m+1))$.
          \end{enumerate}
      \end{thm}
      Combining \eqref{eq: rewritten} with \Cref{eq: thm gk 6.5(2)} gives us the non-vanishing of \eqref{eq: integral in q.}. This concludes the proof of \Cref{prop: main in minimax}.
    \end{proof}
\end{prop}
\begin{proof}[Proof of \Cref{thm: PS in terms of new minimax}] \Cref{prop: main in minimax} implies that $\mathbf{W}_m\neq 0$. The result then follows from \Cref{lem: reduction to W non-zero minimax}.
    
\end{proof}
\section{Explicit test vectors for $\GL(n)\times \GL(n)$}
Let $\pi_1,\pi_2$ be irreducible generic  representations of $G_n$ and write $\mathcal{S}(F^n)$ for the space of complex-valued Schwartz functions on $F^n$. Jacquet-Piatetski-Shapiro-Shalika \cite{jacquet1983rankin} construct a local zeta integral associated to a pair of such representations, whose definition we now recall. 

Let $\psi:F\rightarrow\mathbf{C}^\times$ be any non-trivial additive character. Let $W_1\in \mathcal{W}(\pi_1,\psi), W_2\in \mathcal{W}(\pi_2,\psi^{-1})$, $\Phi\in \mathcal{S}(F^n)$, and $s$ a complex parameter. The zeta integral of \textit{op.cit.} is defined as
\begin{align}\label{eq: RS integral}
    Z(W_1,W_2,\Phi,s):=\int_{U_n(F)\backslash \GL_n(F)} W_1(g)W_2(g)\Phi(e_ng)|\det(g)|^s\ dg
\end{align}
where $e_n:=(0,\dots,0,1)\in F^n$ and $dg$ is the unique, up to scalars, quotient measure on $U_n(F)\backslash \GL_n(F)$. By the seminal work of $\textit{op.cit.}$ the above integral converges for $\Re(s)$ large enough, and admits a unique meromorphic continuation as a rational function of $q^s$. Moreover, there exists an $L$-factor $L(\pi_1,\pi_2,s)$ only depending on $\pi_1,\pi_2$, for which the fractional ideal $$\{ Z(W_1,W_2,\Phi,s)\ |\ W_1\in \mathcal{W}(\pi_1,\psi), W_2\in \mathcal{W}(\pi_2,\psi^{-1}), \Phi\in \mathcal{S}(F^n) \}$$ of $\mathbf{C}[q^{\pm s}]$ is precisely $L(\pi_1,\pi_2,s) \mathbf{C}[q^{\pm s}]\subseteq \mathbf{C}(q^s)$. Thus, there exists a finite number of Whittaker functions $W_{1,i}\in \mathcal{W}(\pi_1,\psi)$, $W_{2,i}\in \mathcal{W}(\pi_2,\psi^{-1})$, and a finite number of Schwartz functions $\Phi_i\in \mathcal{S}(F^n)$ such that 
$$L(\pi_1,\pi_2,s)=\sum_i Z(W_{1,i},W_{2,i},\Phi_i,s).$$
 
  Anatural question that arises is whether one can find explicit families of such \textit{test vectors}, representing the $L$-factor $L(\pi_1,\pi_2,s).$ For example, for $\GL(2)\times \GL(2)$, this question is addressed in detail in the PhD thesis of Kim \cite{kim2010test} where explicit test vectors for most pairs of representations are obtained. For $\GL(n)\times \GL(n)$, \textit{op.cit.} also addresses this question under the assumption that at least of one the $\pi_i$ is unramified. For cuspidal pairs on $\GL(n)\times\GL(n)$, Kurinczuk-Matringe \cite{TestvectorsRobNadir} give explicit test vectors in terms of Pa\v{s}k\={u}nas-Stevens Whittaker functions, whenever $L(\pi_1,\pi_2,s)$ is not identically equal to $1$. In other words, whenever $\pi_1$ is an unramified twist of $\pi_2^\vee$. Their main result is the following.
\begin{thm}\emph{(\cite[Theorem $9.1$]{TestvectorsRobNadir})}\label{thm: main theorem of KurinczukMatringe} Let $\pi$ be an irreducible cuspidal representation of $G_n$, and $\chi$ an unramified character of $F^\times$. Consider the following input data:
\begin{itemize}
    \item The Pa\v{s}k\={u}nas-Stevens Whittaker functions $W_{\chi\pi,\psi}^\mathrm{PS}\in \mathcal{W}(\chi\pi,\psi)$ and $W_{\pi^\vee,\psi^{-1}}^\mathrm{PS}\in \mathcal{W}(\pi^\vee,\psi^{-1})$, as constructed in \cite{PaskunasStevens2008}, with respect
to a suitable non-degenerate character $\psi$ of $U_n(F)$
and suitable maximal extended simple types. 
\item The Schwartz function $\Phi_0:=\mathbf{1}_{\mathcal{O}_F^n}\in\mathcal{S}(F^n).$
\end{itemize}
Then, up to appropriate normalization of the quotient measure $dg$ appearing in \eqref{eq: RS integral}, we have 
$$L(\chi\pi,\pi^\vee,s)=Z(W_{\chi\pi,\psi}^\mathrm{PS},W_{\pi^\vee,\psi^{-1}}^\mathrm{PS},\Phi_0,s).$$

\end{thm}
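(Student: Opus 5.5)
The plan is to compute the zeta integral directly from the small support of the Pa\v{s}k\={u}nas-Stevens Whittaker functions. As the depth-zero case (\Cref{thm PS main thm}) and its positive-depth analogue in \cite[Theorem $5.8$]{PaskunasStevens2008} suggest, $W^{\mathrm{PS}}_{\chi\pi,\psi}$ is supported in $U_n(F)\mathbf{J}$. On $\mathbf{J}$ it agrees with the Pa\v{s}k\={u}nas-Stevens Bessel function $\mathcal{J}_{\chi\lambda}$ of the chosen extended maximal simple type. The analogous statement holds for $W^{\mathrm{PS}}_{\pi^\vee,\psi^{-1}}$ and $\mathcal{J}_{\lambda^\vee}$, where the type is chosen contragrediently so that both functions share the same support.

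First, I restrict the integral over $U_n(F)\backslash G_n$ to $U_n(F)\backslash U_n(F)\mathbf{J}\simeq (U_n(F)\cap \mathbf{J})\backslash \mathbf{J}$. Next, I use the structure $\mathbf{J}=E^\times J$ with $J$ compact, so that $\mathbf{J}=\bigsqcup_{k\in\mathbf{Z}}\varpi_E^k J$. I choose the lattice chain $\Lambda$ (equivalently, the basis) so that $e_n$ is a primitive vector of the lattice $\Lambda_0\simeq\mathcal{O}_F^n$ stabilized by $J$. Then $\Phi_0(e_n\varpi_E^k j)=\mathbf{1}_{k\ge 0}$ for all $j\in J$. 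This needs the standard lattice of $\mathcal{O}_E$ to be preserved by $\varpi_E^k$ for $k\ge0$, with $e_n\varpi_E^k\notin\mathcal{O}_F^n$ for $k<0$, and I will arrange this through the choice of simple type. This turns the zeta integral into a geometric series
$$\sum_{k\ge0} q^{-ks\,n/e(E/F)}\int_{(U_n(F)\cap J)\backslash J}\mathcal{J}_{\chi\lambda}(\varpi_E^kj)\mathcal{J}_{\lambda^\vee}(\varpi_E^kj)\,dj .$$

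The key intermediate claim is that $\mathcal{J}_{\chi\lambda}(\varpi_E^kj)\mathcal{J}_{\lambda^\vee}(\varpi_E^kj)=\zeta^k\,\mathcal{J}_{\chi\lambda}(j)\mathcal{J}_{\lambda^\vee}(j)$ for a root of unity times $\chi(\det\varpi_E)$, call it $\zeta$. To prove it, I use that $\lambda(\varpi_E)$ and $\lambda^\vee(\varpi_E)$ act by mutually inverse operators on the Bessel line, via \cite[Proposition $5.3$]{PaskunasStevens2008}, so that only the twist $\chi(\det\varpi_E)^k$ survives. The remaining constant $c=\int_{(U_n\cap J)\backslash J}\mathcal{J}_{\chi\lambda}\mathcal{J}_{\lambda^\vee}$ is then evaluated by Schur orthogonality for the finite-dimensional representation $\lambda|_J$: since $\mathcal{J}_{\lambda^\vee}(j)=\mathcal{J}_\lambda(j^{-1})$ is essentially $\overline{\mathcal{J}_\lambda(j)}$, the constant $c$ is a positive multiple of $\mathcal{J}_\lambda(1)/\dim\lambda\ne0$. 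Hence $Z=c/(1-\zeta q^{-ns/e})$. It remains to match this with $L(\chi\pi,\pi^\vee,s)=\prod(1-\chi'(\varpi_F)q^{-s})^{-1}$, the product over unramified $\chi'$ with $\chi\pi\simeq\chi'^{-1}\pi$. The number of such $\chi'$ equals the torsion number $n/e(E/F)$, and the product telescopes to $(1-\zeta q^{-sn/e})^{-1}$ once $\zeta$ is identified with $\chi(\varpi_F)^{n/e}$ times the scalar by which $\varpi_E^{e}\varpi_F^{-1}$ acts. Finally, I renormalize $dg$ to absorb $c$.

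The main obstacle is the middle step: controlling how the non-central element $\varpi_E$ interacts with the two Bessel functions, and proving that the resulting scalar $\zeta$ is exactly the one that makes the period $n/e(E/F)$ and the constant match $L(\chi\pi,\pi^\vee,s)$. This forces a careful, compatible choice of the types $\lambda$ and $\lambda^\vee$ and of $\psi$, which is the ``suitable'' choice in the statement. I would first attempt the totally ramified and unramified extremes separately, where $\varpi_E$ is respectively a uniformizer generating $E^\times/\mathcal{O}_E^\times$ or equal to $\varpi_F$, and then combine the two cases.
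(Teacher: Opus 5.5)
The paper gives no proof of this statement. It is quoted from \cite[Theorem 9.1]{TestvectorsRobNadir} and used as a black box in \Cref{thm test vector depth zero} and \Cref{thm: test vector pos. depth}, so your argument has to stand on its own.

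Its skeleton is sound: support of the Pa\v{s}k\={u}nas--Stevens functions in $U_n(F)\mathbf{J}$, the decomposition $\mathbf{J}=\bigsqcup_k\varpi_E^kJ$, $\Phi_0$ truncating the sum to $k\geq 0$, and a geometric series matched against the torsion number $n/e(E/F)$. But the step you yourself call the crux rests on a mechanism that does not work. Outside the case $\varpi_E=\varpi_F$, the element $\varpi_E$ has no reason to normalize the pair $((U_n(F)\cap J^1)H^1,\theta_{\psi})$. So $\lambda(\varpi_E)$ moves the Bessel line to the eigenline of a conjugate pair, not to itself. \cite[Proposition 5.3]{PaskunasStevens2008} only gives bi-equivariance of $\mathcal{J}$ under $(U_n(F)\cap J^1)H^1$ and says nothing about $\varpi_E$. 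Hence the \emph{pointwise} identity $\mathcal{J}_{\chi\lambda}(\varpi_E^kj)\mathcal{J}_{\lambda^\vee}(\varpi_E^kj)=\zeta^k\mathcal{J}_{\chi\lambda}(j)\mathcal{J}_{\lambda^\vee}(j)$ is not available. The vague constant $\zeta$ (a root of unity, the action of $\varpi_E^{e}\varpi_F^{-1}$) is an artefact of this route. If such an extra scalar were really present, the resulting series $(1-\zeta q^{-fs})^{-1}$ would \emph{not} equal $L(\chi\pi,\pi^\vee,s)$. Your argument is only immediate when $E/F$ is unramified, where $\varpi_E=\varpi_F$ is central; that happens to be the only case this paper uses.

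You do not need any pointwise statement, because Schur orthogonality handles every coset at once. Write $\mathcal{J}_\lambda(g)=\langle\lambda(g)v,v^\vee\rangle$, with $v$ spanning the $\theta_\psi$-line of $\lambda$, $v^\vee$ spanning the $\theta_\psi^{-1}$-line of $\lambda^\vee$, and $\langle v,v^\vee\rangle=1$. Then $\mathcal{J}_{\lambda^\vee}(g)=\mathcal{J}_\lambda(g^{-1})$ and $\mathcal{J}_{\chi\lambda}=\chi(\det(-))\mathcal{J}_\lambda$. Since $\lambda|_J$ is irreducible, for every $g\in\mathbf{J}$
\[
\int_J\mathcal{J}_{\chi\lambda}(jg)\,\mathcal{J}_{\lambda^\vee}(jg)\,dj=\chi(\det g)\,\frac{\mathrm{vol}(J)}{\dim\lambda}\,\langle\lambda(g)v,\lambda^\vee(g)v^\vee\rangle\,\langle v,v^\vee\rangle=\chi(\det g)\,\frac{\mathrm{vol}(J)}{\dim\lambda}.
\]
Take $g=\varpi_E^k$ and use $\varpi_E^kJ=J\varpi_E^k$. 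The $k$-th coset then contributes $c\,\bigl(\chi(\varpi_F)^{f}q^{-fs}\bigr)^k$, where $f=n/e(E/F)$ and $\chi(\det\varpi_E)=\chi(\varpi_F)^f$ exactly. Summing over $k\geq 0$ gives $c\,(1-\chi(\varpi_F)^fq^{-fs})^{-1}=c\,L(\chi\pi,\pi^\vee,s)$, since the unramified $\nu$ with $\nu\pi\simeq\pi$ are exactly those with $\nu(\varpi_F)^f=1$. No case split is needed.

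Separately, $\Phi_0(e_n\varpi_E^kj)=\mathbf{1}_{k\geq 0}$ is a real constraint on the Pa\v{s}k\={u}nas--Stevens positioning of $U_n(F)$ relative to $\Lambda$, and it must be verified rather than assumed. For instance, take the standard Iwahori order (upper triangular modulo $\mathfrak{p}_F$) and its usual normalizing element $\Pi$, with $1$'s on the superdiagonal and $\varpi_F$ in the bottom-left corner. Then $e_n\Pi^{-1},\dots,e_n\Pi^{-(n-1)}$ are all standard basis vectors. The sum would therefore start at $k=-(n-1)$ and produce a monomial in $q^{-s}$ that no normalization of $dg$ can absorb.
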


In this section, we combine the results of \Cref{sec: depth zero} and \Cref{sec: pos. depth} with the work of \cite{TestvectorsRobNadir}, to give new explicit test vectors in various cuspidal families on $\GL(n)\times\GL(n)$ which are given in terms of \textit{Whittaker newforms}. In particular, we do this for $L$-factors $L(\chi\pi,\pi^\vee,s)$ for a cuspidal $\pi$ on $\GL(n)$ and an unramified character $\chi$ on $F^\times$, as long as $\pi$ is either depth-zero or minimax of integral depth. 
\subsection{Depth zero}

\begin{thm}\label{thm test vector depth zero}
    Let $\pi$ be a depth-zero irreducible cuspidal representation of $G_n$, and $\chi$ an unramified character of $F^\times$. Consider the following input data:
    \begin{itemize}
        \item The Whittaker newform $W_{\chi \pi,\psi_0}^\mathrm{new}\in \mathcal{W}(\chi\pi,\psi_0)$,
        \item The average $\widetilde{W}_{\pi^\vee,\psi_0^{-1}}^\mathrm{new}\in \mathcal{W}(\pi^\vee,\psi_0^{-1})$ of the dual Whittaker newform, 
$$\widetilde{W}_{\pi^\vee,\psi_0^{-1}}^\mathrm{new}:=\int_{U_n(\mathcal{O}_F)K_n^1} W_{\pi^\vee,\psi_0^{-1}}^\mathrm{new}( (-)\Sigma_n^{-1}y\Sigma_n) \psi_1(y)\ dy,$$
        \item The Schwartz function $\Phi_n:=\mathbf{1}_{\mathcal{O}_F^n\cdot \Sigma_n}\in \mathcal{S}(F^n)$.
    \end{itemize}
    Then, up to appropriate normalization of the quotient measure $dg$ appearing in \eqref{eq: RS integral}, we have 
    $$L(\chi\pi,\pi^\vee,s)=Z(W_{\chi \pi,\psi_0}^\mathrm{new},\widetilde{W}_{\pi^\vee,\psi_0^{-1}}^\mathrm{new}, \Phi_n,s). $$
\begin{proof}
    For ease of notation, we will assume that $\chi$ is the trivial character. The proof for general unramified $\chi$ is identical upon carrying over all the unramified twists of Whittaker functions. 

    Let $W_{\pi,\psi_1}^\mathrm{PS}\in \mathcal{W}(\pi,\psi_1)$ and $W_{\pi^\vee,\psi_1^{-1}}^\mathrm{PS}\in \mathcal{W}(\pi^\vee, \psi_1^{-1})$ be the associated depth-zero Paskunas-Stevens Whittaker functions already studied in \Cref{sec: depth zero}. Let $\Phi_0:=\mathbf{1}_{\mathcal{O}_F^n}$. We start the proof by applying the main result of \cite{TestvectorsRobNadir} (i.e. \Cref{thm: main theorem of KurinczukMatringe}) to this depth-zero setting. In particular, \Cref{thm: main theorem of KurinczukMatringe} tells us that up to appropriate normalization of the quotient measure $dg$ appearing in \eqref{eq: RS integral}, we have 
        $$L(\pi, \pi^\vee,s)=Z(W_{\pi,\psi_1}^\mathrm{PS},W_{\pi^\vee,\psi_1^{-1}}^\mathrm{PS}, \Phi_0).$$
Now, we combine this with the definition of the zeta integral \eqref{eq: RS integral} and \Cref{thm PS in terms of new} to obtain the following: 
\begin{align*}
L(\pi,\pi^\vee,s)=&\int_{U_n(F)\backslash \GL_n(F)} \left(\int_{U_n(\mathcal{O}_F)K_n^1} W_{\pi,\psi_0}^\mathrm{new}(\Sigma_n^{-1}gx \Sigma_n)\psi_1(x^{-1})\ dx\right)\\
    &\times\left(\int_{U_n(\mathcal{O}_F)K_n^1} W_{\pi^\vee,\psi_0^{-1}}^\mathrm{new}(\Sigma_n^{-1} gy \Sigma_n) \psi_1^{-1}(y^{-1})\ dy\right)\Phi_0(e_ng)|\det(g)|^s\ dg.
\end{align*} 
Then, the change of variables $g\rightsquigarrow gx$, together with the fact that $U_n(\mathcal{O}_F)K_n^1\subseteq K_n$ stabilizes $\Phi_0$, imply that
\begin{align*}
L(\pi,\pi^\vee,s)=&\int_{U_n(F)\backslash \GL_n(F)} W_{\pi,\psi_0}^\mathrm{new}(\Sigma_n^{-1}g\Sigma_n)\Phi_0(e_ng)|\det(g)|^s\\
&\times\int_{U_n(\mathcal{O}_F)K_n^1}\int_{U_n(\mathcal{O}_F)K_n^1} W_{\pi^\vee,\psi_0^{-1}}^\mathrm{new}(\Sigma_n^{-1} gx^{-1}y \Sigma_n) \psi_1(x^{-1}y)\ dx\ dy\ dg.
\end{align*}
Now, the change of variables $y\rightsquigarrow x^{-1}y$ followed by the change of variables $g\rightsquigarrow \Sigma_n^{-1} g \Sigma_n$ (\textit{cf.} \cite[\S $2.12(1)$]{jacquet1983rankin}) and the fact that $e_n\Sigma_n=e_n$ imply that 
\begin{align*}
    L(\pi,\pi^\vee,s)&=\mathrm{vol}(U_n(\mathcal{O}_F)K_n^1,dx)\delta_{B_n}(\Sigma_n^{-1})\int_{U_n(F)\backslash \GL_n(F)} W_{\pi,\psi_0}^\mathrm{new}(g) \\
    &\times\left( \int_{U_n(\mathcal{O}_F)K_n^1} W_{\pi^\vee,\psi_0^{-1}}^\mathrm{new}( g\Sigma_n^{-1}y \Sigma_n) \psi_1(y)\ dy\right)\Phi_n(e_ng)|\det(g)|^s\ dg.\\
    &=\int_{U_n(F)\backslash \GL_n(F)} W_{\pi,\psi_0}^\mathrm{new}(g)\widetilde{W}_{\pi^\vee,\psi_0^{-1}}^\mathrm{new}(g) \Phi_n(e_ng)|\det(g)|^s\ dg \\
    &\times \mathrm{vol}(U_n(\mathcal{O}_F)K_n^1,dx)\delta_{B_n}(\Sigma_n^{-1}),
\end{align*}
where $\delta_{B_n}$ denotes the modulus character of $B_n.$
Thus, up to appropriate normalization of the quotient measure $dg$, we have 
$$L(\pi,\pi^\vee,s)=Z(W_{\pi,\psi_0}^\mathrm{new},\widetilde{W}_{\pi^\vee,\psi_0^{-1}}^\mathrm{new},\Phi_n,s)$$
as claimed.
\end{proof}
\end{thm}
\subsection{Positive depth}
\begin{thm}\label{thm: test vector pos. depth}
Let $\Lambda,m,\beta,\mathcal{B}'$ be as in \emph{\Cref{sec: setup}}, and the character $\theta_{\Psi_m}$ be as in \eqref{eq: character}. Let $\pi=\mathrm{c}\text{-}\mathrm{Ind}_{\mathbf{J}}^{G_n}(\lambda)$ be an irreducible cuspidal minimax representation of $G_n$ with integral depth, constructed as in \emph{\Cref{sec: simple chars}}. Let $\chi$ be an unramified character of $F^\times$ and consider the following input data:
 \begin{itemize}
        \item The Whittaker newform $W_{\chi \pi,\psi_0}^\mathrm{new}\in \mathcal{W}(\chi\pi,\psi_0)$,
        \item The average $\widetilde{W}_{\pi^\vee,\psi_0^{-1}}^\mathrm{new}\in \mathcal{W}(\pi^\vee,\psi_0^{-1})$ of the dual Whittaker newform, 
$$\widetilde{W}_{\pi^\vee,\psi_0^{-1}}^\mathrm{new}:=\int_{(U_n(F)\cap J^1)H^1} W_{\pi^\vee,\psi_0^{-1}}^\mathrm{new}( (-)\Sigma_{m,n}^{-1}y\Sigma_{m,n}) \theta_{\Psi_m}(y)\ dy,$$
        \item The Schwartz function $\Phi_{m,n}:=\mathbf{1}_{\mathcal{O}_F^n\cdot \Sigma_{m,n}}\in \mathcal{S}(F^n)$.
    \end{itemize}
    Then, up to appropriate normalization of the quotient measure $dg$ appearing in \eqref{eq: RS integral}, we have 
    $$L(\chi\pi,\pi^\vee,s)=Z(W_{\chi \pi,\psi_0}^\mathrm{new},\widetilde{W}_{\pi^\vee,\psi_0^{-1}}^\mathrm{new}, \Phi_{m,n},s).$$
    \begin{proof}
        The proof follows the same strategy as the proof of \Cref{thm test vector depth zero}, while utilizing the compatibility of duality with the inducing type-theoretic data used in the construction of $\pi$. The proof starts off by, once again, applying \Cref{thm: main theorem of KurinczukMatringe} to deduce that (assuming $\chi=1$ as before)
       $$L(\pi, \pi^\vee,s)=Z(W_{\pi,{\Psi_m}}^\mathrm{PS},W_{\pi^\vee,\Psi_m^{-1}}^\mathrm{PS}, \Phi_0,s).$$
       Then we apply the exact same integral manipulations as in the proof of \Cref{thm test vector depth zero} using \Cref{thm: PS in terms of new minimax}, and observing that in the basis $\mathcal{B}'$ the subgroup $(U_n(F)\cap J^1)H^1\subseteq \GL_n(\mathcal{O}_F)$ and thus stabilizes $\Phi_0$.
    \end{proof}
\end{thm}

\bibliography{citation} 
\bibliographystyle{amsalpha}
\address{Instituto Matemáticas de la Universidad de Sevilla, Seville 41012,
Spain}
\email{alexgroutides21@gmail.com}

\end{document}